%% file: main.tex
\documentclass[a4paper, reqno]{amsart}

\input{preamble}

\begin{document}

\nolinenumbers

\title{\afbf{Lie methods for countably categorical Engel groups: the Wilson conjecture for $4$-Engel $5$-groups}}

\author[C. d'Elb\'{e}e]{Christian d\textquoteright Elb\'ee}
\address{University of the Basque Country, Department of Mathematics (Leioa) / Institute for Logic, Cognition, Language and Information (Donostia-San Sebastián), Basque Country, Spain}
\email{christian.delbee@ehu.eus}
\urladdr{\href{http://choum.net/\textasciitilde chris/page\textunderscore perso/}{http://choum.net/\textasciitilde chris/page\textunderscore perso/}}

\thanks{
The author is supported by the Basque Government grant reference number IT1913-26, and by the Ramon y Cajal grant RYC2023-042677-I funded by MICIU/AEI/10.13039/501100011033 and by ESF+}

\date{\today}

\maketitle

 \begin{center}
 \begin{minipage}{.8\textwidth}
 \section*{Abstract}
We are interested in the following conjecture of Wilson from 1981: every locally nilpotent countably
categorical group is nilpotent. Following our previous work on the Lie algebra analogue of the conjecture, we use Lie methods to deduce the first nontrivial cases of the Wilson conjecture for $n$-Engel groups: countably categorical $3$-Engel groups and $4$-Engel $5$-groups are nilpotent, from which we also conclude that countably categorical $4$-Engel groups of odd exponent are nilpotent. This is implemented via an exceptional case of the Lazard correspondence, checked using computer algebra systems. We also study the transfer of nilpotency results between the three categories: groups, Lie algebras, associative algebras. Among other things, we prove that the Wilson conjecture implies the analogous nilpotency statement for associative algebras (modulo the commutative case). 
 \end{minipage}
 \end{center}


\section{Introduction: presentation of the problems}

A mathematical structure $M$ (groups, rings, algebras...) is $\omega$-categorical if its first-order theory admits a unique countable model up to isomorphism. By the Ryll-Nardzewski Theorem, this is equivalent to the following more dynamical definition: $M$ is $\omega$-categorical if for each $n\in\N$, the componentwise action of $\Aut(M)$ on the Cartesian power $M^n$ has only finitely many orbits. The reader unfamiliar with model theory may take the latter as a definition. All commutators or brackets in this paper are left-normed ($[x,y,z] = [[x,y],z]$).

\subsection{Origin of the problem: classifying $\omega$-categorical associative rings} The study of $\omega$-categorical rings flourished in the second half of the 1970s. It begins with the work of Macintyre and Rosenstein \cite{MacintyreRosenstein1976}, where a classification of unital and reduced (i.e. without nilpotent elements) $\omega$-categorical rings is given, in terms of filtered Boolean powers of finite fields à la Arens-Kaplansky. In parallel, Baldwin and Rose \cite{baldwinrose1977} take the approach of studying on one side the Jacobson radical $J(R)$ of a given $\omega$-categorical ring $R$, and on the other side the semiprimitive quotient $R/J(R)$. While they obtain little information on $R/J(R)$ except in the case where $R$ is also model-theoretically stable, they prove that $J(R)$ is always a nilring and ask if it is nilpotent. This will be answered by Cherlin in two beautiful papers, one dealing with the commutative case and the second one with the general case. 

\begin{fact}(Cherlin Theorem I \cite{cherlinnilringI})
If $A$ is a commutative nilring and the number of orbits in the action of $\Aut(A)$ on $A\times A\times A$ is finite, then $A$ is nilpotent. In particular,  an $\omega$-categorical commutative nilring is nilpotent. 
\end{fact}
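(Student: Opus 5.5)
The plan is to reduce nilpotency to a chain condition on annihilators and then use the finiteness of $\Aut(A)$-orbits on triples to obtain that chain condition.

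\textbf{Setup.} For $a\in A$, the annihilator $\mathrm{Ann}(a)=\{x\in A: xa=0\}$ is an ideal, since $A$ is commutative. Ideals of the form $\mathrm{Ann}(a)$, and more generally the definable ideals $\mathrm{Ann}(a)\cap bA$ and their variants with parameters $a,b$, come in finitely many families up to automorphism. The goal is the following uniform bound.

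\begin{itemize}
\item[(a)] There is $N$ such that every chain of ideals $\mathrm{Ann}(a_1)\subsetneq \mathrm{Ann}(a_2)\subsetneq\cdots$ has length at most $N$.
\end{itemize}

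Heuristically this comes from orbit counting. In a strictly increasing chain built by setting $a_{i+1}=a_i c_i$, the pairs $(a_i,a_{i+1})$, and the triples $(a_i,c_i,a_{i+1})$ used to witness strict inclusion, must realize pairwise distinct orbit-types. Otherwise an automorphism would carry one step of the chain onto a later one, contradicting strictness. I expect this step to be the main obstacle: making the argument rigorous needs a Ramsey-type argument on triples. Finitely many orbits on $A^3$ lets one colour pairs of indices by the orbit of the corresponding triple. An infinite homogeneous subsequence then yields an automorphism shifting the chain, which is impossible for a proper chain.

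\textbf{From (a) to nilpotency.} Given (a), I would argue by induction using the nil hypothesis. Take $a\neq 0$ with $\mathrm{Ann}(a)$ maximal among annihilators. For any $x$ with $ax\neq0$ we have $\mathrm{Ann}(a)\subseteq\mathrm{Ann}(ax)$, so equality holds by maximality. Since $x$ is nilpotent, some power satisfies $ax^k=0$ while $ax^{k-1}\neq 0$. Then
\[
x\in \mathrm{Ann}(ax^{k-1})=\mathrm{Ann}(a),
\]
so $ax=0$, contradicting the choice of $x$. Hence $a$ annihilates $A$, and the annihilator ideal $\mathrm{Ann}(A)$ is nonzero. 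Passing to $A/\mathrm{Ann}(A)$, which is still a commutative nilring with finitely many orbits on triples, we iterate this through the upper annihilator series $\mathrm{Ann}_n(A)$.

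\textbf{Termination.} This series consists of characteristic ideals. To see that it terminates, show it has bounded length via the same orbit-counting argument as in (a). The series must then reach all of $A$: a proper stabilization $\mathrm{Ann}_n(A)=\mathrm{Ann}_{n+1}(A)\neq A$ would contradict the nonzero-annihilator step applied to the quotient. Therefore $A=\mathrm{Ann}_n(A)$ for some $n$, i.e.\ $A$ is nilpotent.

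The final clause is immediate: by Ryll-Nardzewski, an $\omega$-categorical structure has finitely many $\Aut$-orbits on $A^3$, so the first part applies.
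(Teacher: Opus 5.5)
The paper does not prove this statement; it quotes it from Cherlin. Judged on its own, your proposal has a genuine gap at step (a), which is where all the content of the theorem lies. As stated, (a) is false.

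Let $B$ be the countable atomless Boolean algebra viewed as a Boolean ring. Let $A=B\times B$ with coordinatewise addition and $(b_1,b_2)(c_1,c_2)=(0,b_1c_1)$, i.e.\ $A=\epsilon B[\epsilon]/(\epsilon^3)$. This is a commutative ring with $A^3=0$. The group $\Aut(B)$ acts diagonally on $A$ by ring automorphisms with finitely many orbits on $A^3\cong B^6$, so $A$ satisfies the hypotheses. But $\mathrm{Ann}((b,0))=\{(c_1,c_2): c_1b=0\}$, and for $b'<b$ we get $\mathrm{Ann}((b,0))\subsetneq\mathrm{Ann}((b',0))$, witnessed by $c_1=b+b'$. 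So any strictly decreasing sequence in $B$ gives an infinite strictly increasing chain of annihilators.

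The same example shows what is wrong with your justification. All elements of $B\setminus\{0,1\}$ are conjugate under $\Aut(B)$, so some automorphism maps $\mathrm{Ann}((b,0))$ \emph{onto} the strictly larger $\mathrm{Ann}((b',0))$. In an infinite structure, an automorphism carrying one step of a chain onto a later one contradicts nothing. Your Ramsey argument only produces a homogeneous strictly increasing chain, i.e.\ an instance of the strict order property, and $\omega$-categorical structures such as $(\mathbb{Q},<)$ or $B$ are free to have it. Uniform chain conditions on definable families like annihilators come from tameness hypotheses (stability, NSOP), not from finiteness of orbits.

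Restricting (a) to chains $a,ac_1,ac_1c_2,\dots$, which is all your maximality step uses, does not rescue the argument. Your orbit-counting heuristic never invokes the nil hypothesis. In the $\omega$-categorical commutative ring $B$ itself, a chain $b_1>b_2>\cdots$ satisfies $b_{i+1}=b_ib_{i+1}$ while the annihilators $(1+b_i)B$ strictly increase. So no argument of that shape can bound product chains. Moreover, a uniform bound on such chains in $A$ and in all quotients $A/\mathrm{Ann}_n(A)$ is equivalent to nilpotency of $A$: if $A^{c+1}=0$, such chains of nonzero elements have length at most $c$. So the proposal relocates the theorem rather than proving it.

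The remaining pieces are fine:
\begin{itemize}
\item If $\mathrm{Ann}(a)=\mathrm{Ann}(ay)$ for every $y$ with $ay\neq 0$, your argument with $ax^{k-1}$ does force $aA=0$ in a commutative nilring.
\item The upper annihilator series stabilizes, but for a simpler reason than the one you give: finitely many orbits on $A$ leave only finitely many $\Aut(A)$-invariant subsets.
\end{itemize}
What is missing is an argument in which nilness and commutativity interact with the orbit count to produce a nonzero element of $\mathrm{Ann}(A)$. That is precisely the nontrivial part of Cherlin's theorem.
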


\begin{fact}(Cherlin Theorem II \cite{cherlinnilringII})
If $A$ is a nilring and the number of orbits in the action of $\Aut(A)$ on $A\times A\times A\times A\times A\times A$ is finite, then $A$ is nilpotent. In particular, an $\omega$-categorical nilring is nilpotent. 
\end{fact}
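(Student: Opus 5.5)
The plan is to pass from the orbit hypothesis to bounded nil-index, then to local nilpotency via classical ring theory, and finally to global nilpotency via a finiteness argument on definable one-parameter families of ideals. Six variables should suffice because every set used below is defined from at most six parameters.

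\emph{Step 1: bounded index.} Finitely many orbits on $A^6$ gives finitely many orbits on $A$. The nilpotency index of an element is an $\Aut(A)$-invariant, so it takes only finitely many values. Hence $A$ is nil of bounded index, say $x^n=0$ for all $x\in A$.

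\emph{Step 2: local nilpotency.} By the Levitzki--Kaplansky theory, a nil ring of bounded index satisfies a polynomial identity and is locally nilpotent. Moreover, every nonzero homomorphic image of $A$ has a nonzero nilpotent ideal. So $A$ coincides with its Baer (prime) radical. Concretely, the transfinite chain
\[
0=N_0\subseteq N_1\subseteq\cdots\subseteq N_\alpha\subseteq\cdots
\]
is built by letting $N_{\alpha+1}/N_\alpha$ be the sum of all nilpotent ideals of $A/N_\alpha$; this chain exhausts $A$. The remaining task is to show that it stops after finitely many steps, with uniformly bounded nilpotency degree at each step.

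\emph{Step 3: uniformity from the orbit count.} I would replace the transfinite radical by the upper annihilator series. Set $Z_0=0$ and let $Z_{k+1}$ consist of those $a$ with $aA+Aa\subseteq Z_k$; these ideals are all $\Aut(A)$-invariant. For a tuple $\bar a$ of length at most $3$, consider the principal ideal $A\bar aA$ together with its left and right annihilators. These are defined uniformly from $\bar a$, so pairs of such data fall into finitely many classes by the hypothesis on $A^6$. Call this class the type of $\bar a$.

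I then want to show two things. First, the nilpotency degree of $A a A$ (known to be finite locally by Step 2) depends only on the type of $a$, so it is uniformly bounded by some $m$. Second, along any chain
\[
Aa_1A\subsetneq Aa_2A\subsetneq\cdots
\]
no two members can have the same type in a way compatible with inclusion. The mechanism is that an automorphism carrying $a_i$ to $a_j$ with $i<j$ would force a strict self-embedding of an annihilator condition, which contradicts the bound $m$. This bounds the length of such chains. Combining the two bounds yields an integer $N$, depending only on the number of orbits on $A^6$, such that every product of $N$ elements of $A$ lies in $Z_0=0$.

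\emph{Main obstacle.} I expect the hardest part to be the second half of Step 3. Only a finite orbit count is assumed, not full $\omega$-categoricity, so compactness is unavailable. The argument must therefore be a direct combinatorial one, showing that a long product $a_1a_2\cdots a_N\neq0$ produces two positions whose $6$-tuples (pairs of neighbouring factors together with their partial products) are in the same orbit.

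My first attempt would be a Ramsey-style pigeonhole on such windows. The aim is to find $i<j$ and an automorphism $\sigma$ with $\sigma(a_i\cdots a_{j-1})$ related to $a_j\cdots$. Iterating $\sigma$ would then produce arbitrarily high powers of a single element that remain nonzero, which contradicts the bounded index from Step 1.
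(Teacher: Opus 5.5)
The paper does not prove this statement; it quotes it from Cherlin. The only related argument in the paper, Theorem \ref{thm:WCLAimpliesCherlinII}, recovers it only conditionally. It assumes Cherlin Theorem I plus the WCLA, interprets commutative nilalgebras of germs of multiplication maps, and applies the commutative case. So your proposal has to stand on its own, and it does not. Steps 1 and 2 are correct: bounded index, local nilpotency via Levitzki--Kaplansky, and $A$ equal to its Baer radical. But they are the classical easy part. Step 3 carries all the content, and it contains assertions rather than an argument.

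\emph{(a) Nilpotency of $AaA$.} You say $AaA$ is nilpotent ``by Step 2''. Step 2 only gives nilpotency of \emph{finitely generated subrings}. $AaA$ is a two-sided ideal, not a finitely generated subring, so nothing in Steps 1--2 makes it nilpotent. Levitzki's argument only yields \emph{some} $a\neq 0$ with $aAa=0$. Transporting a nilpotency degree along orbits gives a uniform bound $m$ only after finiteness is known for each $a$.

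\emph{(b) The chain claim.} This step has no mechanism. Nilpotency degree is $\Aut(A)$-invariant, so an automorphism carrying $Aa_iA$ onto a strictly larger $Aa_jA$ contradicts nothing about $m$. Finitely many orbits does not prevent $\sigma(X_a)=X_{\sigma(a)}\supsetneq X_a$ for a uniformly defined family; take $X_a=(-\infty,a)$ in $(\mathbb{Q},<)$. Excluding this is exactly excluding the strict order property, an extra hypothesis (compare the NSOP case of the WCG quoted in the introduction). You also never explain how the two bounds, even if granted, would yield $A^N=0$, or how they relate to the series $Z_k$.

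\emph{(c) The pigeonhole fallback.} This fails as stated. Iterating $\sigma$ produces products $u\,\sigma(u)\,\sigma^2(u)\cdots$ of distinct conjugates, not powers $u^k$. Such products are compatible with bounded index. In the non-unital exterior algebra on a countable-dimensional $\F_p$-space one has $x^p=0$ for every $x$. Yet $e_1e_2\cdots e_N\neq 0$ for all $N$, and all the $e_i$ lie in a single automorphism orbit. So a contradiction cannot come from orbit-equality of windows plus bounded index alone. It requires a genuinely finer use of the hypothesis on $6$-tuples, and that is precisely what the proposal lacks.
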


Note that for any $\omega$-categorical commutative ring $R$, the quotient $R/J(R)$ is reduced and hence falls into the analysis of Macintyre and Rosenstein, so that we get a complete picture. In the general case, a classification is far from complete for $\omega$-categorical rings without extra assumptions.

The results of Cherlin are to be read in parallel with the classical theorem of Nagata \cite{Nagata1952}, Higman \cite{Higman1956nagata}, Dubnov and Ivanov \cite{DubnovIvanov1943}:

\begin{fact}[Nagata, Higman, Dubnov, Ivanov]
    Every associative algebra of bounded nilexponent $n$ over a field of characteristic $0$ or $p>n$ is nilpotent.
\end{fact}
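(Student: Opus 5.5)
The plan is to linearize the identity $x^n=0$, show that the algebra is nil of index $n$ in a uniform way, and then prove that products of a bounded length vanish. I would follow Higman's argument, working with the free algebra that satisfies the identity.

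\emph{Linearization.} Let $A$ be an associative algebra over a field $F$ of characteristic $0$ or $p>n$, with $x^n=0$ for all $x$. Replace $x$ by $\sum_i t_i x_i$ with scalars $t_i$ and extract multihomogeneous components. This is legitimate because $F$ has at least $n+1$ elements whenever $p>n$, and is infinite in characteristic $0$, so a Vandermonde argument applies. We obtain the full linearization
$$\sum_{\sigma\in S_n} x_{\sigma(1)}\cdots x_{\sigma(n)}=0,$$
together with its partial linearizations $\sum x^{i_1}yx^{i_2}y\cdots$, which also vanish.

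\emph{Key identity.} From $x^n=0$, derive that for all $x,y$,
$$\sum_{i=0}^{n-1} x^{n-1-i}\,y\,x^{i}\in \text{(terms that eventually vanish)},$$
and more precisely Higman's lemma: $x^{n-1} y x^{n-1}$ can be expressed using the partially linearized relations. The standard route is induction on $n$. Let $I$ be the ideal generated by all $a^{n-1}$. The quotient $A/I$ satisfies $x^{n-1}=0$, so by induction $(A/I)^{N(n-1)}=0$, that is, $A^{N(n-1)}\subseteq I$. It then remains to show that $I$ is nilpotent of bounded class. For this, I would prove $a^{n-1} A a^{n-1}=0$ and more generally $a^{n-1}A b^{n-1}$-type relations. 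Concretely, the identity $\sum_{i+j=n-1} x^i y x^j z\ldots$, with $n$ scalar multiples $k\cdot$ and division by $n!$ allowed since $p>n$, gives $n\,x^{n-1}yx^{n-1}=0$ modulo terms of the form $x^{\ge n}$. Hence $x^{n-1}Ax^{n-1}=0$. From this, Higman shows $I$ is nilpotent with $I^{2^{n}}$-style bounds, yielding the recursion $N(n)\le 2N(n-1)+1$.

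\emph{Main obstacle.} The delicate step is passing from $a^{n-1}Aa^{n-1}=0$ for single elements to nilpotency of the ideal $I$ generated by all $(n-1)$-th powers. This requires controlling products $u_1 a_1^{n-1} u_2 a_2^{n-1}\cdots$ with different $a_i$. I would handle it by linearizing $x^{n-1}yx^{n-1}=0$ in the variable $x$, which again uses $p>n$. The result allows one to rewrite $a_1^{n-1}ua_2^{n-1}$ as a sum of terms in which some element appears to power $\ge n-1$ on both sides, and an inductive counting argument on the number of such factors then forces the product to vanish in bounded length. Once this is done, $A^{N(n)}=0$ follows, with $N(n)\le 2^n-1$.
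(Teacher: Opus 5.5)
The paper gives no proof of this statement; it quotes it as a classical fact. So your proposal has to stand on its own as a version of Higman's argument. The overall architecture is right: induction on $n$, the ideal $I$ generated by $(n-1)$-th powers, $A^{N(n-1)}\subseteq I$, and the recursion $N(n)\le 2N(n-1)+1$. But the step that carries the whole proof is missing.

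\textbf{The missing step.} What you need is $x^{n-1}\,z\,y^{n-1}=0$ for \emph{independent} $x,y,z\in A$. This gives $IAI=0$ at once, because $I$ is spanned by elements $u\,x^{n-1}v$ with $u,v$ in the unitization. Hence $A^{2N(n-1)+1}\subseteq IAI=0$.

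You only obtain the same-element identity $x^{n-1}Ax^{n-1}=0$. That identity needs no division at all: it is just $f(x,zx^{n-1})=0$ for $f(x,w)=\sum_{i=0}^{n-1}x^iwx^{n-1-i}$.

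\textbf{Why your proposed bridge fails.} Linearizing $x^{n-1}zx^{n-1}=0$ in $x$ only gives the symmetric relations
\[
\sum_k P_{n-1-k,k}(x,y)\,z\,P_{k,n-1-k}(x,y)=0,
\]
where $P_{a,b}$ is the sum of all words with $a$ letters $x$ and $b$ letters $y$. It never isolates the individual term $x^{n-1}zy^{n-1}$. Already for $n=2$, the identity $xzx=0$ linearizes only to $xzy+yzx=0$, and $xzx=0$ alone does not imply $xzy=0$. So your unspecified ``counting argument'' would have to bring the nil identity back in essentially, and that is exactly the missing content. Separately, linearizing an identity of degree $2n-2$ in $x$ needs $2n-1$ distinct scalars. $\F_p$ lacks these when $n<p<2n-1$ (e.g.\ $n=4$, $p=5$, the case relevant to this paper) unless you first extend scalars and justify that $x^n=0$ survives.

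\textbf{The fix.} Use Higman's two-variable trick; this is where $p>n$ is genuinely used, not just for linearization. The polynomial $f$ above vanishes on $A$, being a partial linearization of $x^n$. Reading $x^0,y^0$ as empty words, compute
\[
0=\sum_{j=0}^{n-1} f(x,zy^{j})\,y^{n-1-j}=\sum_{i=0}^{n-1}\sum_{j=0}^{n-1}x^{i}z\,y^{j}x^{n-1-i}y^{n-1-j}.
\]
For $i\le n-2$, the inner sum over $j$ equals $x^iz\,f(y,x^{n-1-i})=0$. For $i=n-1$, it equals $n\,x^{n-1}zy^{n-1}$. Since $n$ is invertible, $x^{n-1}zy^{n-1}=0$. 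The induction then closes with $N(n)\le 2^n-1$, with no need to linearize $x^{n-1}zx^{n-1}$ and no need for $I$ to be ``nilpotent with $2^n$-style bounds.''
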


The nilpotency class of the ring depends only on $n$. The best known bound is $n^2$, obtained by Razmyslov \cite{Razmyslov1974TraceIdentities} and the exact bound is conjectured to be $\frac{n(n+1)}{2}$. This raises the question of the existence of such a bound in the case treated by Cherlin: does there exist a function $f$ such that an $\omega$-categorical nilring of nilexponent $n$ is nilpotent of class at most $f(n)$?

The work of Cherlin is also to be connected to the solution to the Kurosh-Levitzki for nilrings by Jacobson and Kaplansky \cite{Kaplansky1946, Jacobson1945}: nilrings of bounded nilexponent are locally nilpotent. In particular, finitely generated nilrings of bounded nilexponent are nilpotent. From a certain point of view, $\omega$-categoricity could be considered as a very weak form of being finitely generated, we will come back on this later on. Note that being a nilring and a locally nilpotent ring are equivalent notions for locally finite rings, in particular for $\omega$-categorical rings (see section \ref{sec:assalgWCGWCLA}).

\subsection{The Wilson conjecture on countably categorical groups}

 Recall that a group $G$ is $n$-Engel if it satisfies the identity 
\[[x,\underbrace{y,\ldots, y}_{n\text{ times}}] = 1.\]

In a foundational paper \cite{wilson1981}, Wilson starts a systematic study of $\omega$-categorical groups and states a conjecture which is easily seen to be equivalent to the following statement (see \cite{delbee3Engelchar5}).
\begin{conjecture}[WCG, Wilson, 1981]\label{conj:wilson}
    Every $\omega$-categorical $n$-Engel $p$-group is nilpotent.
\end{conjecture}

Under the WCG, in every $\omega$-categorical group $G$ there is a finite characteristic series $G = G_1\rteq G_2\rteq \ldots \rteq G_n = 1$ such that each quotient $G_i/G_{i+1}$ is either an elementary abelian group or a boolean power of a finite simple group, see \cite{Apps83A}. A positive answer to Wilson's conjecture would also imply that every locally nilpotent $\omega$-categorical group is nilpotent. 

The conjecture has been solved under extra model-theoretic assumptions but seems overall out of reach for now. A few general structural results were obtained in the 1980s by Rosenstein, Cherlin, Wilson, Apps \cite{Rosenstein1973, wilson1981, Apps83A,Apps83B, Apps83C} then later by Macpherson \cite{Macpherson1988ubiquitousomcatgroups} and Archer and Macpherson \cite{ArcherMacpherson1997}. 
Model theorists have been quite successful in analyzing the structure of \textit{tame} $\omega$-categorical groups, starting with Felgner \cite{Felgner1978} who described the stable ones, followed then by a series of results by Baur, Cherlin, Macintyre  \cite{BCM79}, Macpherson (e.g. Wilson's conjecture holds for NSOP groups \cite{Macpherson1988ubiquitousomcatgroups}), Krupinski, Wagner, Evans, Derakhshan, Dobrowolski \cite{DerakhshanWagner1997, EvansWagner2000, Krupinski2012, DobrowolskiWagner2020} (see \cite{DobrowolskiWagner2020} for a nice history section of the advances in this approach). Very recently Macpherson and Tent \cite{macpherson2024omegacategoricalpseudofinitegroups} studied pseudofinite $\omega$-categorical groups, with refined conjectures in that case.

The analogy between the WCG and Cherlin's results is apparent, and suggests a naive attack on the WCG by revisiting Cherlin's methods, in a group context, whatever that could mean. In fact, in \cite{wilson1981}, Wilson mentions: ``Powerful though Cherlin's methods in \cite{cherlinnilringI} and \cite{cherlinnilringII} are, they seem inadequate
for an attack on [the WCG].". Our conviction is that, although at a global level, the methods and heuristics of Cherlin might not indeed be adequate in the group context, they might prove to be useful at a local level, i.e. for particular values of parameters. We will obtain the main result of this paper, namely the WCG for $4$-Engel $5$-groups, using a direct translation of Cherlin's proofs, requiring an intermediate category sitting between groups and associative algebras: Lie algebras.

\subsection{An intermediate ``test question"} In \cite{Apps83A}, Apps gives the formal arguments leading to the analysis of $\omega$-categorical groups via a characteristic series as mentioned above. The author also writes ``locally nilpotent $\aleph_0$-categorical Lie rings might serve as an interesting intermediate case". The analogous of the WCG for Lie rings or rather Lie algebras is the following statement, which we will abusively refer to as the Wilson conjecture for Lie algebras (WCLA), even though it was not stated by Wilson. Recall that a Lie algebra $L$ is $n$-Engel if it satisfies the identity 
\[[x,\underbrace{y,\ldots, y}_{n\text{ times}}] = 0.\]

\begin{conjecture}[WCLA]
 Every $\omega$-categorical $n$-Engel Lie $\F_p$-algebra is nilpotent.
\end{conjecture}

The fact that the Lie ring case might actually be an intermediate case, as suggested by Apps, is not so clear. In fact, section \ref{sec:assocliering} deals with this particular question and our conclusions seem to indicate that, while the WCG and the WCLA are similar statement, they might just be unrelated questions, at least at a global level. However, as mentioned above, at a local level, by which we mean for some particular values of parameters $n,p$, transfer between groups, Lie algebras and associative algebras can be used to leverage results from one category to another.

This paper is the third in a series of the author on the WCG and the WCLA. Our first paper \cite{delbee3Engelchar5} explains our overall strategy. Starting from the following scheme
\begin{figure}[h!]
    \centering
\includegraphics[scale=.5]{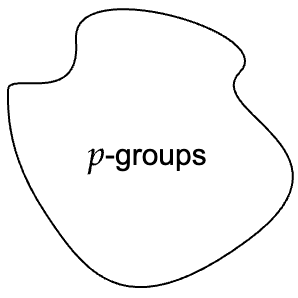} \quad \raisebox{3em}{\scalebox{1.5}{$\leadsto$}}  \quad  
\includegraphics[scale=.5]{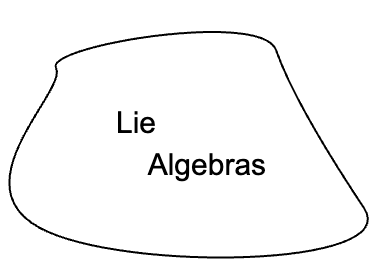} \quad \raisebox{3em}{\scalebox{1.5}{$\leadsto$}}  \quad  \includegraphics[scale=.5]{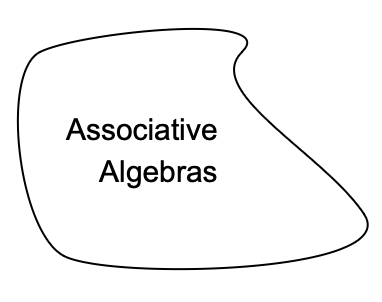}    
\end{figure}
our the long-term goal is to be able to trace nilpotency results back from the right of an arrow to the left of the arrow.  This could be at a heuristical level, for instance, in \cite{delbee3Engelchar5} we solve the WCLA for $3$-Engel Lie $\F_5$-algebras by a direct implementation of Cherlin's argument in the enveloping algebra of $3$-Engel Lie algebras of characteristic $5$. In \cite{delbee4engelchar3}, we interpret an associative algebra in $4$-Engel Lie $\F_3$-algebras, applying directly the result of Cherlin. In the present paper, we go one step further into our program and deduce our first cases of the WCG from cases of the WCLA, namely we will transfer the result of $3$-Engel Lie $\F_5$-algebras to a result on $3$-Engel $5$-groups and then use that result to deduce the $4$-Engel $5$-groups case of the WCG. This is, to our knowledge, a first and highly non-trivial general case of the WCG.

 We also provide a few more results concerning the transfer of nilpotency from the left to the right direction. In section \ref{sec:lefttoright}, we deduce Cherlin Theorem II from the WCG (or the WCLA) assuming only Cherlin Theorem I (the commutative case). The proof is surprisingly nontrivial, see Theorem \ref{thm:WCLAimpliesCherlinII}. We further prove that an $n$-Engel Lie algebra which is the associated Lie algebra of an $\omega$-categorical associative algebra is nilpotent, see Theorem \ref{thm:shalev}. The latter is an analogue of a classical theorem of Shalev \cite{shalevlocalengelnilp}, and we also provide a group version, see Corollary \ref{cor:shalevgroup}.

\subsection{Analogies with classical nilpotency problems} In \cite{delbee3Engelchar5, delbee4engelchar3}, we draw strong connections between the WCG, the WCLA and Cherlin Theorem II on one side, and the following  classical nilpotency problems 
\begin{itemize}
\item The \textit{restricted Burnside problem}: there are only finitely many finite $r$-generated groups of bounded exponent $e$, for each given pair $(r,e)$.
\item The \textit{Engel group problem}: every $n$-Engel group is locally nilpotent.
\item The \textit{Kostrikin problem}: every $n$-Engel Lie algebra is locally nilpotent.
\item The \textit{Kurosh-Levitzki problem}: every nilalgebra of bounded nilexponent $n$ is locally nilpotent. 
\end{itemize}
The Kostrikin problem and the restricted Burnside problem were solved by Zelmanov \cite{Zelmanov1990,Zelmanov1991}. The Kurosh-Levitzki problem was positively solved by Jacobson and Kaplansky \cite{Kaplansky1946, Jacobson1945}. The Engel group problem is still open, although it seems that specialists believe it to be too good to be true.

We also draw connections with \textit{global} nilpotency results, which are often obtained via asymptotic assumptions such as the following classical ``characteristic $0$" theorem of Zelmanov \cite{Zelmanov88}: for any $n$, there exists $N\in \N$ such that every $n$-Engel Lie algebra of characteristic $p>N$ is nilpotent. The WCLA and the WCG can be considered as nilpotency problems sitting in between local nilpotency problems (e.g. the Kostrikin problem) and global nilpotency problems (e.g. the characteristic $0$ theorem). Indeed, statements of the WCG and the WCLA are global nilpotency problems, but the $\omega$-categoricity assumption is stronger than just a local finiteness assumption and in some sense, could be considered close to a very weak form of being finitely generated. In fact, another conjecture of Wilson states that $\omega$-categorical groups generate a \textit{Cross} variety, i.e. a variety generated by a finite group.

The connections between the WCG, the WCLA and those classical problems are not only anecdotal and aesthetically satisfying. Most importantly, they come into play at various levels for concrete new results on the WCG and the WCLA. For instance, the author's solution of the WCLA for $n = 4$ and $p = 3$ used machinery directly ocurring in the work of Higman, Kostrikin, Vaughan-Lee and Traustason, revisited in the $\omega$-categorical context. This work lies at the intersection of algebra, computer algebra and model theory. Another successful approach developed by the classical authors for solutions of the problems above involves realizing the strategy that we seek: passing from one category to another, between groups, Lie and associative algebras. 

\subsection{Lie methods in group theory}
There is a remarkable number of papers which advertise \textit{Lie methods in group theory} in their title \cite{VaughanLee2011LiemethodsEngelgroups, Zelmanov95Lieringmethods, liemethodswall, BartholdiGrigorchuk2000LieMethods,Shumyatsky2000LieRingMethods, Shalev2000LieMethods}. It is true that, on paper, this approach is very attractive. It roughly consists in solving a group-theoretic problem via the following steps:
\begin{enumerate}
\item construct a Lie algebra from a group,
\item transfer the group-theoretic problem into a Lie algebra problem,
\item solve the Lie algebra problem.
\end{enumerate}
For instance, the restricted Burnside problem follows from the Kostrikin problem above, and the latter is what Zelmanov had to solve \cite{Zelmanov1990,Zelmanov1991}. The general philosophy seems to be that the inherent linearity of Lie algebras sometimes gives the right point of view in order to solve a given problem. The usual approach of Lie methods in group theory uses the \textit{associated Lie ring} of a given group $G$, which is obtained by defining a Lie bracket (in terms of the group commutators) on the external direct sum $\oplus_{i<\omega}\gamma_i/\gamma_{i+1}$ where $(\gamma_{i})_i$ is the lower central series. This construction is expounded in section \ref{sec:assocliering} and our conclusions are that this approach is sure to fail for solving the WCG. Indeed, the associated Lie ring of any $\omega$-categorical group is nilpotent, and seems to carry very little information concerning the nilpotency of the group. 

Nonetheless, we succeeded in transferring the nilpotency result of \cite{delbee3Engelchar5} to the group context by the use of an exceptional case of the \textit{Lazard correspondence}, which we describe at length in section \ref{subsec:lazard}. Our result is that $3$-Engel groups of exponent $5$ and $3$-Engel Lie $\F_5$-algebras enjoy an equivalence of categories, a seemingly rare phenomenon which seems to have been missed by the specialists of the domain. This purely algebraic result also has group-theoretic consequences, such as an analogue, for $3$-Engel groups, of a classical theorem of Higgins on Lie algebras (Corollary \ref{cor:higginsforgroups}): solvability and nilpotency coincide. We also draw similar conclusions for $4$-Engel groups of exponent $7$.

\subsection{Current state of knowledge for small values of $(n,p)$} We summarize the results concerning the WCG and the WCLA in the following table. The intersection of an ``$n$-Engel" row and a ``$p$" column is -if it exists- a bound on the nilpotency class of $n$-Engel $p$-groups, or $n$-Engel Lie $\F_p$-algebras.

\begin{table}[h]
\centering
\begin{minipage}{0.45\linewidth}
\centering
\begin{subtable}{\linewidth}
\centering
\begin{tblr}{
 cells={c},
 vline{1-2,7} = {-}{},
 hline{1-2,6} = {-}{}
}
$p=$ & $2$ & $3$ & $5$ & $7$ & $>7$ \\
$2$-Engel & 2 & $3$ & $2$ & $2$ & $2$ \\
$3$-Engel & \circleblue{$\infty$}$\mathrlap{^\ast}$ & $4$ & \hspace{-.5em} \tikzmark{left} \circleblue{$\infty$} & $4$ & $4$ \\
$4$-Engel & $\infty$ & \circleblue{$\infty$}$\mathrlap{^\ast}$ & \circleblue{$\infty$} & $7$ & $7$ \\
$5$-Engel & $\infty$ & $\infty$ & $\infty$ & $\infty$ & $10$
\end{tblr}
\caption{Engel $p$-groups}
\end{subtable}
\end{minipage}
\hfill
\begin{minipage}{0.45\linewidth}
\centering
\begin{subtable}{\linewidth}
\centering
\begin{tblr}{
 cells={c},
vline{1-2,7} = {-}{},
 hline{1-2,6} = {-}{}
}
$p=$ & $2$ & $3$ & $5$ & $7$ & $>7$ \\
$2$-Engel & $2$ & $3$ & $2$ & $2$ & $2$ \\
$3$-Engel & $\infty$ & $4$ & 
\hspace{-.5em} \tikzmark{right} \circleblue{$\infty$} & $4$ & $4$ \\
$4$-Engel & $\infty$ & \circleblue{$\infty$} & $\infty$ & $7$ & $7$ \\
$5$-Engel & $\infty$ & $\infty$ & $\infty$ & $\infty$ & $11$
\end{tblr}
\caption{Engel $\mathbb{F}_p$-LAs}
\end{subtable}
\end{minipage}

\begin{tikzpicture}[overlay,remember picture]
\draw[<->, crimsonred, bend left=6] ($(left)+(.5,.3)$) to[bend left] node[pos=0.40,above] {\footnotesize Exceptional Lazard} ($(right)+(.1,.25)$);
\end{tikzpicture}
\end{table}

A blue circle indicates that the WCG or the WCLA has been proven positively by the author for that case. A star indicates that the nilpotency in that case follows directly from the literature: those cases are solvable in general and hence $\omega$-categorical ones are nilpotent by a Theorem of Wilson \cite{wilson1981}.

\subsection{From algebras to rings} Our theorems generally concern associative or Lie algebras over finite fields, which might seem restrictive at first sight. First, the reason to consider positive characteristic is because all nilpotency problems are trivial in characteristic $0$, by classical theorems such as Nagata-Higman \cite{Nagata1952,Higman1956nagata} or Zelmanov \cite{Zelmanov88}. It remains the question of associative or Lie rings, and our results extend from algebras to rings in a quite direct way. This is based on the following two facts, for an $\omega$-categorical (Lie/associative) ring $R$
\begin{itemize}
    \item the group $(R,+)$ has bounded exponent $n = p_1^{\alpha_1}\cdots p_s^{\alpha_s}$,
    \item for each $i$ the quotient $R/p_iR $ is interpretable, hence again $\omega$-categorical.
\end{itemize}
Then, assuming that a nilpotency result has been proved for a class of $\omega$-categorical algebras, it generally extends to the $\omega$-categorical ring case. Let us take the example of Theorem \ref{thm:shalev} below, which states that if $A$ is an $\omega$-categorical associative algebra whose underlying Lie algebra $L(A)$ (with bracket $[x,y] = xy-yx$) is $n$-Engel, then $L(A)$ is nilpotent as a Lie algebra. Let $R$ be an $\omega$-categorical ring of characteristic $e = p_1^{\alpha_1}\cdots p_s^{\alpha_s}$ such that the Lie ring $L(R)$ is $n$-Engel for some $n$. For each $1\leq i\leq s$, the $\F_{p_i}$-algebra $R/p_iR$ is $\omega$-categorical and $L(R/p_iR)$ is again $n$-Engel. Applying Theorem \ref{thm:shalev}, there exists $e_i$ such that $L(R/p_iR)$ is nilpotent of class $<e_i$. Therefore, the bracket of $e_i$ elements of $R$ falls in $p_iR$ and the bracket of $\alpha_i e_i$ elements of $R$ falls in $p_i^{\alpha_i}R$. In the end, the product of $\alpha_1 e_1+\ldots+\alpha_s e_s$ elements falls in $eR = 0$, hence $L(R)$ is nilpotent.

All of the statements from Section \ref{sec:lefttoright} extend in a similar fashion from algebras to rings, with the appropriate adjustment. The only thing to check is that the hypotheses of the theorem are preserved under the map $R\to R/pR$.

\subsection*{AI Disclosure} I asked chatGPT to proofread the paper, correct mistakes and inaccuracies, which it did pretty well. It also provided me with a proof, shorter than mine, of Fact \ref{fact:kurochGPT}, which I included. I also asked if my theorems could be improved, and it made a bunch of inequal suggestions, which I did not pursue by lack of convincedness.

\subsection*{Acknowledgement} 
As usual, I have a long list of colleagues and friends to thank. First, I am very thankful to Michael Vaughan-Lee for numerous discussions and exchanges, and for warm welcomes in the lowlands of Scotland. I am also very grateful to Eamonn O'Brien for his patience and help with several NQA runs which I was not (yet) able to exploit. I am very grateful to Dugald Macpherson for numerous insights and for his readiness to discuss. I am very thankful to Nick Ramsey, for our endless discussions and for your very valuable friendship. I am also grateful to Willem de Graaf for helpful exchanges. I am thankful to the various places that welcomed me during the writing of this note, mainly a woody Inn of the Appalachians in West Virginia, and Turkish Airline. Azkenik, ene maitiari, zure patzientizagatik, zure goxotasunagatik eta elkarren ondoan beti izateagatik, mundua pasatzen begiratuz.

\section{The rightward direction: the natural reductions}\label{sec:lefttoright}

\subsection{From groups and Lie algebras to associative algebras}\label{sec:assalgWCGWCLA} In this section, we prove that Cherlin Theorem II (or rather its qualitative version) follows from the WCG and the WCLA, using Cherlin Theorem I, the commutative case, as the main tool. One should not expect to prove anything concerning a commutative associative algebrausing results from Lie algebras.

A nilalgebra or a nilring always has a Lie algebra reduct and a group reduct. Those will be our way to connect the WCG and the WCLA to Cherlin's theorem.

\noindent $\bullet$ The \textit{associated Lie algebra $L(A)$ of an associative algebra} $A = (A,+,.)$ is given by the structure $(A,+,[.,.])$ where $[x,y] = xy-yx$. By denoting $\gamma_i(L(A))$ the member of the (Lie-algebraic) lower central series, the following is clear:
\[\gamma_k(L(A)) \seq A^{(k)}.\]
Therefore, if $A$ is nilpotent as an associative algebra, so is $L(A)$ as Lie algebra.

\noindent $\bullet$ The \textit{adjoint group $A^\circ$ of a nilalgebra} $A$ is given by the structure $(A, \circ)$, where $x\circ y = x+y+xy$. The adjoint group corresponds to the group of units of the \textit{unitarization} $\F_p+A$ of $A$, obtained by adding a unit. The arithmetic in $\F_p+A$ is given by:
\[(\lambda+x)(\mu+y) = \lambda\mu+\mu x+\lambda y+xy\]
The set $1+A$ is a multiplicative subset of $\F_p+A$, and one immediately sees that the arithmetic within $1+A$ coincides with the operation $\circ$:
\[(1+x)(1+y) = 1+x+y+xy\]
The inverse for $\circ$ of an element $a\in A$ is given by $a^{-1} = \sum_{i\geq 1} (-a)^i $, which is well-defined in a nilring. In $1+A$, we have $(1+a)^{-1} = \sum_{i \geq 0} (-a)^{i}$. Let $G$ be the group $1+A$ and $g,h\in G$, $g = 1+x$, $h = 1+y$. We will denote the group commutator in $G$ by $[g,h]_G = g^{-1}h^{-1}gh$. Commutation in the group can be controlled by the Lie commutator of $A$:
\[gh = hg + [x,y]\]
Therefore, commutators are related as follows:
\[[g,h]_G = 1+g^{-1}h^{-1}[x,y] = 1+[x,y]+z[x,y]\tag{$\star$}\]
by writing $g^{-1}h^{-1} = 1+z$, $z\in A$.
It follows that $\gamma_2(G)\seq 1+A^{(2)}$
and an immediate induction using $(\star)$ yields:
\[\gamma_k(G)\seq 1+A^{(k)}.\]
Therefore, if $A$ is nilpotent as an associative algebra, so is $A^\circ$ as a group. 

There is a rather rich literature on transfers of properties between nilrings (or more generally radical rings), the associated Lie algebra and the adjoint group, see for instance \cite{jenningsnilpradicalrings,AmbergDickenschied1995, AmbergSysak2002, shalevlocalengelnilp}.

We now quickly argue that the following notions coincide for $\omega$-categorical rings:
\begin{enumerate}
    \item locally nilpotent,
    \item uniformly locally nilpotent,
    \item nil,
    \item nil of bounded nilexponent.
\end{enumerate}

By the Ryll-Nardzewski Theorem, the increasing union of sets $\set{x\mid x^n = 0}$ stabilizes which immediately yields $(3)\implies (4)$, hence $(3)\Leftrightarrow (4)$. A similar argument yields $(1)\implies (2)$ hence $(1)\Leftrightarrow (2)$.
$(1)\implies (3)$ is immediate and $(4) \implies (1)$ \textit{a priori} uses the solution to the Kurosh-Levitzki problem:
\begin{fact}[Kurosh-Levitzki Problem]
    Any finitely generated nilring of bounded nilexponent is nilpotent.
\end{fact}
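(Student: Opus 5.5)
The approach is the classical Levitzki-type induction, in the style of Nagata--Higman: show that a nilring of nilexponent $n$ generated by finitely many elements is nilpotent. Fix generators $a_1,\ldots,a_r$ of $A$ and argue by induction on the nilexponent $n$. The case $n=1$ is trivial, since then $A=0$. Before starting, I will pass to the free object: it suffices to treat the relatively free algebra $F_{r,n}$ on $r$ generators of the variety defined by $x^n=0$. One can work over $\mathbb{Z}$, or reduce to algebras over a prime field as in the introduction, noting that the (bounded) additive exponent is harmless.

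The main tool is the \emph{linearization} of $x^n=0$. Substituting $x = \sum_i t_i y_i$, or more honestly using the multilinear identity obtained by taking the multihomogeneous component, gives
\[\sum_{\sigma\in S_n} y_{\sigma(1)}\cdots y_{\sigma(n)} = 0.\]
This identity holds in every nilring of exponent $n$ without any characteristic hypothesis, because the full multilinear component is a $\mathbb{Z}$-combination of the substitutions $x=y_{i_1}+\ldots+y_{i_k}$ (inclusion--exclusion). From it, together with $x^n=0$, one derives the key consequence $x^{n-1} A\, x^{n-1}=0$ for every $x$. More generally, I expect it to show that the ideal $I$ generated by all $(n-1)$-th powers is locally nilpotent (in fact a locally nilpotent ideal, via $x^{n-1}y x^{n-1}=0$-type relations). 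Granting this, $A/I$ has nilexponent $n-1$, so by induction $A/I$ is nilpotent of some class $N$. The remaining task is to show that $I$ is nilpotent when $A$ is finitely generated.

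An alternative I would actually prefer, since it is the standard proof, is Shirshov's height theorem. It says that every word in $a_1,\ldots,a_r$ of sufficient length either contains an $n$-divisible subword, i.e. one admitting a factorization $u_1\cdots u_n$ that is lexicographically greater than all its nontrivial permutations, or contains a subword of the form $v^n$ with $|v|\le n$. Words of the second kind vanish since $v^n=0$. Words of the first kind can be rewritten, via the linearized identity, as a $\mathbb{Z}$-combination of lexicographically smaller words of the same multidegree. Since there are finitely many words of a given length, a descending induction on lex order shows that every long word is a combination of words that are not long, hence zero. So $A^{(m)}=0$ for $m$ equal to the Shirshov bound $h(r,n)$, and $A$ is nilpotent.

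The main obstacle is the combinatorial Shirshov lemma itself: that sufficiently long words over a finite ordered alphabet with no $n$-th power of a short word must contain an $n$-divisible subword. I would prove it by the usual induction on the alphabet size, refactoring words into blocks beginning with the largest letter $a_r$, and applying the induction hypothesis to the new alphabet of such blocks of bounded length. The rewriting step is routine once the linearized identity is in hand.
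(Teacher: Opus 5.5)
Your Shirshov argument is correct, and it does not follow the paper, because the paper gives no proof of this statement. It quotes it from Jacobson and Kaplansky and proves only the finite case (Fact~\ref{fact:kurochGPT}). That argument is short: $I=R^{(k)}=I^{(2)}$, take a minimal left ideal $J\subseteq I$ with $IJ\neq 0$, obtain $x=ax$ with $a$ nilpotent, contradiction. It suffices there because $\omega$-categorical rings are uniformly locally finite.

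Your route is more general:
\begin{itemize}
\item The multilinear identity is integral, so the argument works for infinite finitely generated rings over $\mathbb{Z}$ in any characteristic.
\item It gives $A^{(N)}=0$ with $N=N(r,n)$ depending only on the number of generators and the index. That is uniform local nilpotency directly, whereas the paper gets uniformity from Ryll-Nardzewski.
\item The price is Shirshov's combinatorial lemma. What you state is that lemma, not the height theorem.
\end{itemize}
One sentence needs fixing. The rewriting preserves length and multidegree, so long words never become ``words that are not long''. The correct conclusion is this: for each fixed $m\ge N$, every word of length $m$ vanishes, by well-founded induction on the lexicographic order among the finitely many words of length $m$. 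Each such word either contains some $v^n$, or equals a $\mathbb{Z}$-combination of lexicographically smaller words of length $m$.

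Your first route, as written, is not a proof. The identity $x^{n-1}Ax^{n-1}=0$ does hold in every ring with $z^n=0$, but not for the reason you give. Setting $y_2=\dots=y_n=x$ in the multilinear identity only yields $(n-1)!\sum_i x^iyx^{n-1-i}=0$, hence $(n-1)!\,x^{n-1}yx^{n-1}=0$. This is vacuous for $\mathbb{F}_p$-algebras with $p\le n-1$, and that is where Nagata--Higman-type arguments use their characteristic hypothesis.

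A characteristic-free derivation goes as follows. Put $b=x^{n-1}y$. Since $xb=x^ny=0$, expanding $(x+b)^n=0$ leaves only the words $b^jx^{n-j}$, so $\sum_{j=1}^{n-1}b^jx^{n-j}=0$. Set $c=x^{n-1}yx^{n-1}$. Then $bx^{n-1}=c$ and $b^jx^{n-j}=c\,y\,b^{j-2}x^{n-j}$ for $j\ge 2$, so the relation reads $c=-cs$ with $s=\sum_{j=2}^{n-1}y\,b^{j-2}x^{n-j}$ (reading $b^0$ as empty; $s=0$ if $n=2$). Iterating gives $c=(-1)^ncs^n=0$.

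After that, the step you only ``expect'' is immediate. The ideal generated by each $x^{n-1}$ has square zero, so $I$ is a sum of nilpotent ideals and hence locally nilpotent. You also do not need $I$ to be nilpotent. The subring $A^{(N)}\subseteq I$ is generated by the finitely many words of length $N,\dots,2N-1$, so it is nilpotent, and this closes the induction on $n$. With these repairs, your first route becomes a complete, elementary Levitzki-style proof that avoids Shirshov's lemma altogether.
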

Using that $\omega$-categorical structures are uniformly locally finite, we only need the following baby version of the Kurosh-Levitzki problem, for which we provide a quick ChatGPT proof.

\begin{fact}\label{fact:kurochGPT}
    Any finite nilring is nilpotent.
\end{fact}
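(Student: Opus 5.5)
The approach is to use the finiteness of $A$ to make the chain of powers $A \supseteq A^2 \supseteq A^3 \supseteq \cdots$ stabilize, and then show that the stable term is zero. Since $A$ is finite, this chain of subsets must become constant: there is $m$ with $A^m = A^{m+1} = \cdots$. Put $B = A^m$. Then $B^2 = A^{2m} = B$, so $B$ is a finite nilring with $B = B^2$, and it suffices to prove $B = 0$. Note that $B = B^2$ also gives $B = B^k$ for all $k$.

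Suppose $B \neq 0$. Consider the set $\mathcal{S}$ of finite subsets $S \subseteq B$ such that $SB \neq 0$. Here $SB$ denotes the additive subgroup generated by the products $sb$ with $s\in S$, $b\in B$. Since $B = B^2 \neq 0$, the whole of $B$ lies in $\mathcal{S}$. Choose $S\in\mathcal{S}$ of minimal cardinality; we aim to reduce to a single element $x$ with $xB \neq 0$.

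In fact, it is cleaner to argue with minimal nonzero one-sided ideals. Among the nonzero left ideals of $B$ of the form $Bx$, or more generally among the nonzero submodules $M$ of $B$ (as a left $B$-module) with $BM = M$, pick a minimal one. Such an $M$ exists because $B$ itself satisfies $B\cdot B = B$ and $B$ is finite. Then $BM = M \neq 0$, so there is $x\in M$ with $Bx \neq 0$. Now $Bx \subseteq M$ is a left $B$-submodule, and $B(Bx) = B^2x = Bx$. By minimality, $Bx = M$, so $x \in Bx$, that is, $x = bx$ for some $b\in B$.

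Iterating gives $x = b^k x$ for all $k$. Since $b$ is nilpotent, this forces $x = 0$, contradicting $Bx \neq 0$. Hence $B = 0$, so $A^m = 0$ and $A$ is nilpotent.

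The only delicate step is guaranteeing minimality under the condition $BM = M$: minimality is taken among the finitely many nonzero submodules satisfying $BM = M$, and we need $Bx$ to belong to that family. This is exactly what $B^2 = B$ provides. Finiteness is used twice: to stabilize the chain of powers, and to pick the minimal $M$.
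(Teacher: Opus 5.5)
Your proof is correct and is essentially the paper's argument. You stabilize the powers to get $B=B^2$, pick a minimal left submodule $M$ with $BM=M\neq 0$ (where the paper picks a minimal left ideal $J$ of $R$ inside $I$ with $IJ\neq 0$), deduce $x=bx$, and then kill $x$ using the nilpotency of $b$. The paragraph introducing the family $\mathcal{S}$ of finite subsets is never used and can simply be deleted.
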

\begin{proof}
    Let $R$ be such, then the sequence $(R^{(k)})_{k\geq 1}$ is stationary, hence there is some $k$ such that $I = R^{(k)} = I^{(2)}$. Let $J\seq I$ be the minimal left ideal of $R$ in $I$ such that $IJ\neq 0$. In particular, there is $x\in J$ such that $Ix\neq 0$. As $Ix$ is still a left-ideal, we have $J\seq Ix$ and hence $x = ax$ for some $a\in R$. As $a^n = 0$ we conclude $x = 0$, a contradiction.
\end{proof}

In the proof of the next result, we will use the left-normed convention for writing functions. In particular, $(x)f\circ g$ is the result of applying first $f$ to $x$ and $g$ to the image $(x)f$ of $x$ by $f$. This is convenient for the adjoint maps $x\mapsto [x,b]$ for $b$ in a Lie algebra $L$, and is more suitable with our left-normed convention of bracketting. We are ready to prove our first result.

\begin{theorem}\label{thm:WCLAimpliesCherlinII}
    Assuming Cherlin Theorem I, any $\omega$-categorical nilalgebra whose associated Lie algebra is nilpotent as a Lie algebra is nilpotent as an associative algebra. In particular, assuming Cherlin Theorem I, the WCLA implies Cherlin Theorem II.
\end{theorem}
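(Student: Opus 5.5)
We argue by induction along the Lie lower central series of $L(A)$, turning each central layer into a commutative nilring to which Cherlin Theorem I applies. Let $A$ be an $\omega$-categorical nilalgebra with $L(A)$ nilpotent of class $c$, so $\gamma_{c+1}(L(A))=0$. For $k\geq 2$ let $T_k$ denote the two-sided associative ideal of $A$ generated by $\gamma_k(L(A))$, that is $T_k=\gamma_k+A\gamma_k+\gamma_kA+A\gamma_kA$. Then $T_2\supseteq T_3\supseteq\cdots\supseteq T_{c+1}=0$.

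The first step is a definability remark. Each $T_k$ is $\Aut(A)$-invariant. In an $\omega$-categorical structure an invariant subset of $A$ is a finite union of $\Aut(A)$-orbits, hence $\emptyset$-definable. Consequently every quotient $A/T_k$ is interpretable in $A$, and so is again an $\omega$-categorical nilalgebra. The same applies to definable subrings of these quotients, such as centers.

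We prove by downward induction on $k$, starting at $k=2$, that $A/T_{k}$ is nilpotent; the case $k=c+1$ is the theorem.

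\emph{Base case.} $A/T_2$ is commutative, since all commutators lie in $T_2$. It is an $\omega$-categorical commutative nilring, so Cherlin Theorem I gives that it is nilpotent.

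\emph{Inductive step.} Suppose $A/T_k$ is nilpotent, say $A^{(m)}\subseteq T_k$, and pass to $B=A/T_{k+1}$. In $B$ we have $[\gamma_k,A]=\gamma_{k+1}\subseteq T_{k+1}$, so the image $\bar\gamma_k$ of $\gamma_k(L(A))$ lies in the center $Z(B)$. It follows that $\bar T_k=\bar\gamma_k+\bar\gamma_k B$, because $b\gamma=\gamma b$ for central $\gamma$.

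The center $Z(B)=\set{z\mid \forall y\ zy=yz}$ is a definable commutative subring of $B$ and a nilring. It is therefore an $\omega$-categorical commutative nilring, so by Cherlin Theorem I there is $r$ with $Z(B)^{(r)}=0$.

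Now take a product of $r$ elements of $\bar T_k$. Write each factor as a finite sum of terms $\gamma$ or $\gamma b$ with $\gamma\in\bar\gamma_k\subseteq Z(B)$ and $b\in B$. Expand the product and use centrality to move all the $\gamma$'s to the front. Each resulting term lies in $Z(B)^{(r)}B^1=0$, so $\bar T_k^{(r)}=0$. Combined with $B^{(m)}\subseteq\bar T_k$, this gives $B^{(mr)}=0$, completing the induction.

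The main point requiring care is the centrality step: the identity $\bar T_k=\bar\gamma_k B^1$ and the reordering of products rely on $\bar\gamma_k\subseteq Z(B)$. One must also apply Cherlin Theorem I to $Z(B)$, not to $\bar\gamma_k$, because $\bar\gamma_k$ is only an additive subgroup and need not be closed under multiplication. The definability of $T_{k+1}$ and of $Z(B)$ is what keeps $\omega$-categoricity available at each stage.

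For the ``in particular'' statement: given an $\omega$-categorical nilalgebra $A$, the associated Lie algebra $L(A)$ is $\omega$-categorical, being a reduct. It is also $n$-Engel for some $n$: $A$ has bounded nilexponent $N$, and $(x)\mathrm{ad}_y^{n}=\sum_i \pm\binom{n}{i}y^ixy^{n-i}$ vanishes once $n\geq 2N-1$. The WCLA then makes $L(A)$ nilpotent, and the first part makes $A$ nilpotent.
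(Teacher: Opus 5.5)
Your proof is correct, and it takes a genuinely different route from the paper's.

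\textbf{The paper's route.} The paper inducts on $k$ under the weaker hypothesis $A\gamma_{k+1}(L(A))=0$ and works top-down. For each $a\in A$ it interprets, with parameter $a$, the algebra of germs at $a$ of the subalgebra of $\End(A)$ generated by right multiplications by elements $[b_1,\ldots,b_k]$. The hypothesis makes this germ algebra commutative, and Cherlin Theorem I makes it nilpotent. $\omega$-categoricity then makes the bound uniform in $a$, so the ideal generated by $A\gamma_k$ is nilpotent, and the induction passes to the quotient.

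\textbf{Your route.} You filter bottom-up by the ideals $T_k$. The quotient $A/T_2$ is already commutative. Each later layer $T_k/T_{k+1}$ is generated by central elements of $B=A/T_{k+1}$, so Cherlin Theorem I is applied once, to the parameter-free commutative nilring $Z(B)$. The elementary fact that an ideal generated by central elements lying in a nilpotent subring is itself nilpotent then finishes the step.

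\textbf{Comparison.} Your argument is more elementary: it uses no germs, no parameters and no uniformity step. It is also sharper in its model-theoretic hypothesis. Since $\Aut(A)$ acts on each $A/T_k$ and preserves $Z(A/T_{k+1})$, the first part of your argument only needs finitely many $\Aut(A)$-orbits on $A^3$, which is exactly the hypothesis of Cherlin Theorem I. The paper's germ technique is the more portable tool; it is reused from the author's earlier work and is set up to give the Remark's strengthening that $A\gamma_k(L(A))=0$ suffices. You recover that strengthening too. Your induction shows $A/T_{c+1}$ is nilpotent without using $\gamma_{c+1}=0$, and $A\gamma_{c+1}=0$ forces $AT_{c+1}=0$, so $A^{(m)}\subseteq T_{c+1}$ gives $A^{(m+1)}=0$. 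For the ``in particular'' part, your explicit Engel bound via $(x)\mathrm{ad}_y^n=\sum_i(-1)^i\binom{n}{i}y^ixy^{n-i}$ is a clean substitute for the paper's local-nilpotency argument.

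One cosmetic point: your induction runs upward in $k$ (downward along the chain of ideals), so ``downward induction on $k$'' should be rephrased.
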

\begin{proof}
    Let $(A,+,\cdot)$ be an $\omega$-categorical associative nilalgebra and assume that the associated Lie algebra $ (L(A), +, [.,.])$ is nilpotent. There exists $k\in \N$ such that $\gamma_{k+1}(L) = 0$. In particular, we have, for all $a, b_1,\ldots, b_k\in A$
    \[a[b_1,\ldots,b_{k+1}] = 0.\]
    We prove the result by induction on $k$ satisfying the latter equation.

    \textit{The case $k = 1$.} Assuming that $A$ satisfies the equation $x[y_1,y_2] = 0 $, we have $ab_1b_2 = ab_2b_1$ for all $a,b_1,b_2\in A$ and thus the associative subalgebra $B$ of $\End(A)$ generated by the linear maps $f_b:x\to xb$ is commutative. Fixing $a\in A$, we consider the set $B_a$ of germs of $B$ at $a$. Those are defined by the quotient of the algebra $B$ by the equivalence relation $f_b\sim_a f_c\iff (a)f_b = (a)f_c$. Addition of germs is given by taking the class of $f_b+f_c$. Because $(a)f_bf_c = (a)f_cf_b$, the multiplication in $B$ yields a well-defined multiplication on germs, and $B_a$ is a commutative associative algebra. 
    
    
    We claim that $B_a$ is interpretable in $A$, using the parameter $a$. The proof of this fact is directly analogous to the one of \cite[Theorem 3.1.]{delbee4engelchar3}, $\omega$-categoricity yields $m\in \N$ such that every element of $B_a$ is equivalent to a sum of the form $\sum_{i = 1}^m f_{b_i}$ for a uniform $m$ (note that $f_{c_1}\circ \cdots \circ f_{c_r} = f_{c_1\cdots c_r}$). Then $B_a$ is interpreted as a quotient of the cartesian power $A^m$. In particular, $B_a$ is $\omega$-categorical.

    Because $A$ is uniformly locally nilpotent, $B_a$ is a nilalgebra and Cherlin Theorem I applies so $B_a$ is nilpotent. It follows that there exists $s\in \N$ such that $f_{b_1}\cdots f_{b_s} \sim_a 0$. In other words $ab_1\cdots b_s = 0$. A priori, $s$ depends on $a$ but by $\omega$-categoricity, there exists a uniform $t\in \N$ such that for all $a,b_1,\ldots, b_t$ we have $ab_1\cdots b_t = 0$ and hence $A$ is nilpotent.

    \textit{The case $k> 1$.} Assume that $k>1$. For each $a,b_1,\ldots,b_{k+1} \in A$, we have $a[[b_1,\ldots, b_k],b_{k+1}] = 0$, therefore
    \[a[b_1,\ldots, b_k]b_{k+1} = ab_{k+1}[b_1,\ldots, b_k] \]
    \begin{claim}
        There exists $l\in \N$ such that for all $a\in A$ and $(b_{i,j})_{1\leq i\leq l,1\leq j\leq k}\in A$ we have
        \[a[b_{1,1},\ldots,b_{1,k}][b_{2,1},\ldots,b_{2,k}]\cdots [b_{l,1},\ldots,b_{l,k}] = 0\tag{$\dagger$}\]
    \end{claim}
    \begin{proof}[Proof of the Claim]
    We use the same trick as in the case of $k = 1$: we interpret germs of a subalgebra of $\End(A)$. 
    
    The equation $x[y_1,\ldots, y_k]z = xz[y_1,\ldots, y_k]$ implies in particular that 
    \[a[b_1,\ldots, b_k] [c_1,\ldots,c_k] = a[c_1,\ldots, c_k][b_1,\ldots, b_k].\]
    We consider the subalgebra $C$ of $\End(A)$ generated by linear maps of the form $f_{\vec b}:x\mapsto x[b_1,\ldots,b_k]$, for $\vec b = (b_1,\ldots, b_k)$ in the cartesian power $A^k$. We consider the germs $C_a$ of elements of $C$ at $a$, and the commutation above gives that $C_a$ is a well-defined, commutative associative algebra. It is again interpretable in $A$ although here we need to observe that $f_{\vec b}f_{\vec c}$ is not necessarily of the form $f_{\vec d}$. Nonetheless, $\omega$-categoricity yields that there exists $m$ such that every element in $C_a$ is equivalent to an element of the form 
    \[\sum_{i = 1}^m f_{\vec b_1}\cdots f_{\vec b_m}\]
    so that $C_a$ can be interpreted as a quotient of $\bigcup_{1\leq j\leq m} (A^{m})^j$. We conclude that $C_a$ is $\omega$-categorical.

    Again, as $A$ is uniformly locally nilpotent, the algebra $C_a$ is nil and hence nilpotent by Cherlin Theorem I. This implies that for each $a$, there is an $l\in \N$ as in the statement of the claim. Then, $\omega$-categoricity yields a uniform $l\in \N$.
    \end{proof}

    Consider now the two-sided associative ideal $I$ of $A$ generated by all elements of the form $a[b_1,\ldots,b_k]$ for all $a,b_i\in A$. Note that $I$ is not necessarily commutative. Nonetheless, the latter implies that every element of $I$ can be written of the form $\sum_i a_i [b_{i,1},\ldots, b_{i,k}]$, and that the product of $l$ elements of $I$ can be written as a sum of products of the form 
    \[a_1\cdots a_l[b_{1,1},\ldots,b_{1,k}][b_{2,1},\ldots,b_{2,k}]\cdots [b_{l,1},\ldots,b_{l,k}]\]
    which vanishes by the Claim. We conclude that $I$ is a nilpotent ideal of $A$. The quotient algebra $A/I$ is again $\omega$-categorical and satisfies the equation $x[y_1,\ldots, y_k] = 0$, so it is nilpotent by induction. We conclude that $A$ is nilpotent.

    To conclude the ``in particular" statement, let $(A,+,\cdot)$ be an $\omega$-categorical associative nilalgebra, and consider the associated Lie algebra $ (L(A), +, [.,.])$. By the beginning of the section, the algebra $A$ is uniformly locally nilpotent. Let $a,b\in L(A)$ and $A_0 = \vect{a,b}$ be the associative algebra generated by $a,b$. Then for each $n$, the Lie element $[a,b^{(n)}]$ lives in $\gamma_n(L(A_0))\seq A_0^{(n)}$ hence there exists $n\in \N$ such that $L := L(A)$ is $n$-Engel. By the WCLA, $L$ is nilpotent, hence we conclude by the previous statement.
\end{proof}

\begin{remark}
Note that the proof does not use that $L$ is nilpotent, but rather that $A\gamma_k(L(A)) = 0$ for some $k$.
\end{remark}

The fact that the WCG implies Cherlin Theorem II will rely on Theorem \ref{thm:WCLAimpliesCherlinII} via the well-known following fact due to Jennings \cite{jenningsnilpradicalrings}:
\begin{fact}\label{fact:jennings}
    Let $A$ be a nilalgebra. Then $A^\circ$ is nilpotent if and only if $L(A)$ is nilpotent.
\end{fact}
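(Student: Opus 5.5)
One direction is already in place: $\gamma_k(A^\circ)\subseteq 1+A^{(k)}$ and $\gamma_k(L(A))\subseteq A^{(k)}$ give nilpotency of both when $A$ is nilpotent. The real content is comparing the two lower central series when $A$ itself may not be nilpotent. The plan is to show, by induction on $k$, that the two series are linked through associative ideals. Set $\Lambda_k$ to be the associative two-sided ideal generated by $\gamma_k(L(A))$, and $\Gamma_k$ the ideal generated by $\{g-1 : g\in\gamma_k(A^\circ)\}$. I will aim to prove that each of $\gamma_k(A^\circ)$ and $\gamma_k(L(A))$ is contained in a set controlled by a bounded-index term of the other series, modulo higher products of these ideals.

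\emph{Step 1: comparing commutators.} I would expand the group commutator as in $(\star)$: $[1+x,1+y]_G = 1+[x,y]+z[x,y]$, where $z$ is a polynomial in $x,y$ without constant term. Iterating gives
\[
[g_1,\ldots,g_k]_G = 1+[x_1,\ldots,x_k]+(\text{terms in } A\gamma_k(L(A)) + \gamma_k(L(A))A + A\gamma_k(L(A))A).
\]
Conversely, I would write $[x,y]=(1+x)(1+y)\big([1+x,1+y]_G-1\big)$, and hence express Lie commutators via group commutators times elements of $1+A$. This is the easy bookkeeping step.

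\emph{Step 2: the key lemma.} For a nilalgebra, if $L(A)$ is nilpotent of class $c$, then the ideal $\Lambda_{c}$ generated by $\gamma_c(L)$ is central-ish. The standard Jennings route is to show that $\gamma_{k}(L)\cdot A\subseteq \gamma_{k}(L)+\ldots$ fails in general. So instead I would prove the cleaner statement that $(A\gamma_2(L)A)^{m}\subseteq \gamma_{m+1}(L)+\ldots$, i.e.\ a Jennings-type inclusion $\Lambda_i\Lambda_j\subseteq \Lambda_{i+j-1}$ together with $[\Lambda_i,A]\subseteq\Lambda_{i+1}$. With these, the filtration $1+\Lambda_k$ is a central series of $A^\circ$, so $\gamma_k(A^\circ)\subseteq 1+\Lambda_k$. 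If $\gamma_{c+1}(L)=0$, then $\Lambda_{c+1}=0$ and $A^\circ$ is nilpotent. Symmetrically, $\Gamma_k$ should give a Lie central series, yielding the converse direction.

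\emph{Main obstacle.} The hard step is the multiplicativity $\Lambda_i\Lambda_j\subseteq\Lambda_{i+j-1}$. Ideals generated by Lie commutators do not behave well under products in general; this is exactly Jennings' theorem, and its proof uses identities such as $a[b,c]d$ rewritten via $[ab,c]=a[b,c]+[a,c]b$. I would first try to prove $[\Lambda_i,A]\subseteq\Lambda_{i+1}$ directly from the Leibniz rule, $[u[b_1,\ldots,b_i]v,w]$ expanding into terms each containing either $[b_1,\ldots,b_i,w]$ or a factor $[u,w],[v,w]$ multiplied by an element of $\gamma_i$. If the multiplicativity cannot be closed, I would invoke Jennings' original argument. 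Since the statement is quoted as a known Fact from \cite{jenningsnilpradicalrings}, citing it is legitimate, and the sketch above indicates the mechanism.
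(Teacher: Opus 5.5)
The paper gives no proof of this statement; it quotes it from Jennings \cite{jenningsnilpradicalrings}. Your fallback of citing Jennings therefore matches the paper, and as a bare citation it is acceptable. The sketch you offer as ``the mechanism'' does not work, however. Its key lemma is false, already for nilalgebras satisfying both sides of the equivalence.

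Take $A=E^+$, the non-unital Grassmann algebra on $e_1,e_2,\ldots$ over $\F_p$ with $p$ odd.
\begin{itemize}
\item $A$ is nil: every element lies in a finitely generated, hence nilpotent, subalgebra.
\item Writing $x=x_0+x_1$ (even plus odd part), $[x,y]=2x_1y_1$ is even, hence central. So $\gamma_3(L(A))=0$ and $\Lambda_3=0$.
\item Yet $e_1e_2=\tfrac12[e_1,e_2]\in\Lambda_2$, so $e_1e_2e_3\in\Lambda_2$.
\item We have $[e_1e_2e_3,e_4]=2e_1e_2e_3e_4\neq 0$ and $(e_1e_2)(e_3e_4)\neq 0$. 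So both $[\Lambda_2,A]\subseteq\Lambda_3$ and $\Lambda_2\Lambda_2\subseteq\Lambda_3$ fail.
\item Correspondingly, $1+\Lambda_k$ is not a central series of $A^\circ$: $[1+e_1e_2e_3,1+e_4]_G=1+2e_1e_2e_3e_4\neq 1$, while $1+\Lambda_3=\{1\}$.
\end{itemize}
More generally, $[\Lambda_i,A]\subseteq\Lambda_{i+1}$ for all $i$ would make every Lie nilpotent algebra strongly Lie nilpotent. The Grassmann algebra is the standard example showing this is false. Your ``symmetric'' claim for $\Gamma_k$ fails in the same example. One checks that $\gamma_3(A^\circ)=1$ here, so $\Gamma_3=0$. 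But $[1+e_1,1+e_2]_G=1+2e_1e_2$ puts $e_1e_2e_3$ in $\Gamma_2$, and $[e_1e_2e_3,e_4]\neq 0$.

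Two further points.

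First, Step 1 is not bookkeeping. Already for $k=3$, iterating $(\star)$ produces the cross term $[z,w][x,y]\in\gamma_2(L)\gamma_2(L)$, where $1+z=g^{-1}h^{-1}$ is a polynomial in $x,y$. This term lies in $\Lambda_3$ only because of the identity $[a,b][c,d]+[a,c][b,d]\in\Lambda_3$ combined with the special form of $z$. Nothing in your plan controls such products of commutators for larger $k$, and controlling them is exactly the difficulty.

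Second, the opening claim that ``one direction is already in place'' is mistaken. The fact that nilpotency of $A$ forces nilpotency of $A^\circ$ and $L(A)$ is neither implication of the equivalence. Moreover, $E^+$ shows that $A$ need not be nilpotent when both $A^\circ$ and $L(A)$ are.

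So your proposal establishes nothing beyond the citation. Either cite Jennings, as the paper does, and drop the sketch, or replace the ideal filtrations $1+\Lambda_k$ and $\Gamma_k$, which are not central filtrations, with an argument that genuinely handles products of commutators.
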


\begin{corollary}
    Assuming Cherlin Theorem I, any $\omega$-categorical nilalgebra whose adjoint group is nilpotent is nilpotent as an associative algebra. In particular, assuming Cherlin Theorem I, the WCG implies Cherlin Theorem II.
\end{corollary}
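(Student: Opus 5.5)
The argument reduces directly to Theorem \ref{thm:WCLAimpliesCherlinII} through Fact \ref{fact:jennings}. Let $A$ be an $\omega$-categorical associative nilalgebra whose adjoint group $A^\circ$ is nilpotent. By Fact \ref{fact:jennings}, $L(A)$ is nilpotent as a Lie algebra. Theorem \ref{thm:WCLAimpliesCherlinII}, which assumes Cherlin Theorem I, then shows that $A$ is nilpotent as an associative algebra. The only point to check is that Theorem \ref{thm:WCLAimpliesCherlinII} needs exactly these hypotheses: $\omega$-categoricity of $A$, that $A$ is nil, and nilpotency of $L(A)$. Jennings' fact is purely algebraic and does not interact with the model theory, so nothing is lost in the passage.

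For the ``in particular'' statement, take an arbitrary $\omega$-categorical nilalgebra $A$ over $\F_p$. We first note that $A^\circ$ is $\omega$-categorical. It is interpretable in $A$, indeed definable on the same universe by $x\circ y = x+y+xy$, so $\Aut(A)\leq \Aut(A^\circ)$. Hence $\Aut(A^\circ)$ has finitely many orbits on each Cartesian power.

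Next we check that $A^\circ$ is a $p$-group. In $\F_p+A$, for $g = 1+a$ we have $g^{p^r} = 1+a^{p^r}$, because the binomial coefficients vanish in characteristic $p$. By the discussion at the beginning of the section, $A$ has bounded nilexponent, so $A^\circ$ has bounded exponent a power of $p$.

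We also need $A^\circ$ to be $n$-Engel for some $n$. Given $g = 1+x$ and $h = 1+y$, let $A_0$ be the associative subalgebra generated by $x$ and $y$. Iterating $(\star)$ gives $[g,h,\ldots,h]\in 1+A_0^{(n)}$, with $h$ repeated $n$ times. Since $A$ is uniformly locally nilpotent, there is a uniform $N$ with $A_0^{(N)} = 0$ for every two-generated subalgebra $A_0$. Thus $A^\circ$ is $N$-Engel. The WCG then gives that $A^\circ$ is nilpotent, and the first statement finishes the proof.

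The main obstacle, such as it is, lies in the care needed with Fact \ref{fact:jennings}, a cited result; everything else is bookkeeping. The check of the Engel condition via $(\star)$ is routine: $\gamma_k$ of the subgroup generated by $g,h$ lies in $1+A_0^{(k)}$, by the same induction already given in the section.
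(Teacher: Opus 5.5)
Your proposal is correct and follows the paper's proof. Fact \ref{fact:jennings} turns nilpotency of $A^\circ$ into nilpotency of $L(A)$, and Theorem \ref{thm:WCLAimpliesCherlinII} finishes; for the ``in particular'' part, uniform local nilpotency of $A$ makes $A^\circ$ $n$-Engel, so the WCG applies. You also check explicitly that $A^\circ$ is $\omega$-categorical and has $p$-power exponent, which the paper leaves implicit.
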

\begin{proof}
    Let $A$ be an $\omega$-categorical nilalgebra whose adjoint group $A^\circ$ is nilpotent. Then by Fact \ref{fact:jennings}, $L(A)$ is nilpotent and hence by Theorem \ref{thm:WCLAimpliesCherlinII}, $A$ is nilpotent. To conclude the ``in particular" statement: assume that $A$ is an $\omega$-categorical nilalgebra. As $A$ is locally nilpotent as an associative algebra and the $\circ$-group generated by $a,b$ is also nilpotent, we have that $A^\circ$ is $n$-Engel for some $n\in \N$. Applying the WCG, we have that $A^\circ$ is nilpotent and we conclude.
\end{proof}

We conclude this section by a quick proof of very special cases of the WCG and the WCLA, where the group or the Lie algebra sits within an $\omega$-categorical associative algebra. The Lie algebra case is an analogue of the following classical Theorem of Shalev \cite{shalevlocalengelnilp}.

\begin{fact}[Shalev]
    Let $A$ be a finitely generated associative algebra over a field of positive characteristic. If $L(A)$ is $n$-Engel then $L(A)$ is nilpotent. 
\end{fact}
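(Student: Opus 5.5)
The plan is to reduce to Zelmanov's solution of the Kostrikin problem, that is, local nilpotency of $n$-Engel Lie algebras. The reduction uses the associative structure to make $L(A)$ finitely generated as a Lie algebra over a suitable central subalgebra.

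\emph{Step 1: large $p$-th powers are central.} Write $\mathrm{ad}_y = R_y - L_y$, where $R_y$ and $L_y$ are right and left multiplication by $y$. These two operators commute. So in characteristic $p$, for $q=p^k$ we get
\[
\mathrm{ad}_y^{q} = R_{y^q}-L_{y^q}.
\]
Choose $q=p^k\geq n$. Since $\mathrm{ad}_y^n=0$ by the Engel hypothesis, it follows that $[x,y^q]=0$ for all $x,y\in A$. Hence every $q$-th power is central in $A$.

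\emph{Step 2: finite module over a central subalgebra.} Let $x_1,\ldots,x_r$ generate $A$. The Engel identity is a nontrivial polynomial identity, so $A$ is a PI-algebra. Shirshov's height theorem then gives $N$ and finitely many words $w_1,\ldots,w_M$ in the $x_i$ such that $A$ is spanned by products $u_1^{e_1}\cdots u_h^{e_h}$, with $h\leq N$ and each $u_j$ a word of length bounded by the PI degree. Splitting each exponent $e_j$ modulo $q$ shows that $A$ is spanned, as a module, by finitely many words over the subalgebra $C$ generated by the $q$-th powers $u^q$ of these boundedly many words $u$. The algebra $C$ is central by Step 1, and it is commutative and finitely generated.

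\emph{Step 3: apply Zelmanov.} Because $C$ is central, the bracket on $L(A)$ is $C$-bilinear. So $L(A)$ is an $n$-Engel Lie algebra over the commutative ring $C$, and it is finitely generated as a $C$-module, hence finitely generated as a Lie $C$-algebra. The goal is then to apply Zelmanov's theorem: finitely generated $n$-Engel Lie rings are nilpotent.

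The main obstacle is that Zelmanov's theorem is stated over a field or for Lie rings, while here the scalars are in $C$. I would handle this as follows. Since $\gamma_k(L(A)) = C\cdot\gamma_k(L_0)$, it suffices to prove $\gamma_k(L_0)=0$ for a suitable Lie ring $L_0$. Take $L_0$ to be the Lie ring generated by the finitely many $C$-module generators $w_i$ together with the finitely many products $c_jw_i$, where the $c_j$ are the generators of $C$. The $C$-linearity of the bracket is what justifies this reduction.

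The ring $L_0$ is a finitely generated $n$-Engel Lie ring of characteristic $p$, so Zelmanov gives a $k$ with $\gamma_k(L_0)=0$. This yields $\gamma_k(L(A))=0$. The check I expect to require the most care is precisely this reduction from $C$-Lie algebras to an honest finitely generated Lie ring.
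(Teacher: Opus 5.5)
Your argument is correct. The paper does not prove this statement; it is quoted from Shalev as a Fact. The natural comparison is therefore with the paper's proof of its $\omega$-categorical analogue, Theorem~\ref{thm:shalev}, which takes a different route:
\begin{enumerate}
\item Artin--Wedderburn on finite subalgebras, together with Engel's theorem, shows that the associative ideal generated by $\gamma_2(L(A))$ is nil.
\item Cherlin's theorem makes this ideal nilpotent, so $L(A)$ is solvable.
\item A separate $\omega$-categorical result upgrades solvable to nilpotent.
\end{enumerate}
Your route never passes through solvability. The $p$-power trick and Shirshov's height theorem make $A$ a finitely generated module over a central subalgebra. Central multilinearity of the bracket then reduces everything to local nilpotency of a finitely generated $n$-Engel Lie algebra, i.e.\ Zelmanov's theorem. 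Each approach is tied to its hypothesis. An $\omega$-categorical algebra is typically not finite over its centre, so your reduction has nothing to grip there. The paper's route only reaches solvability of $L(A)$, and it relies on $\omega$-categoricity both for Cherlin's theorem and for the final upgrade to nilpotency. The price of your route is dependence on two deep theorems, Shirshov's and Zelmanov's.

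Two points to tidy.

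\emph{Unitality.} $A$ need not be unital and $C$ need not contain $1$, so you should work over $C^{1}=\F\cdot 1+C$ acting on $A$. You should also add the generators $u^{q}$ of $C$ to your module generators, to cover the products in which every exponent is divisible by $q$. What you need, and what holds, is the inclusion $\gamma_k(L(A))\subseteq C^{1}\cdot\gamma_k(L_0)$, not an equality with $C\cdot\gamma_k(L_0)$.

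\emph{The step you flag as the main obstacle is easier than you think.} Write each entry of $[a_1,\ldots,a_k]$ as a $C^{1}$-combination of the module generators $w_i$ and use $C^{1}$-multilinearity. This puts $\gamma_k(L(A))$ inside the $C^{1}$-span of $\gamma_k$ of the Lie subring generated by the $w_i$ alone. So the extra generators $c_jw_i$ are superfluous. Finite generation of $C$ is not needed either; it suffices that $A$ is a finitely generated module over its centre.
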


\begin{theorem}\label{thm:shalev}
    Let $A$ be an $\omega$-categorical associative algebra. If $L(A)$ is $n$-Engel then $L(A)$ is nilpotent as a Lie algebra.
\end{theorem}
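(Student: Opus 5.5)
The plan is to reduce to the nil case, which Cherlin Theorem II handles, and then recover Lie nilpotency from a finite filtration. It has three steps.

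\emph{Step 1: the commutator ideal is nil.} Since $A$ is $\omega$-categorical, it is uniformly locally finite, so every finitely generated subalgebra $A_0$ is finite. For finite $A_0$, the quotient $A_0/J(A_0)$ is, by Wedderburn's theorem, a finite product of matrix algebras $M_r(\F_q)$. The Lie algebra of $A_0/J(A_0)$ is $n$-Engel, being a quotient of $L(A_0)$, and this forces $r=1$. Indeed, for $r\geq 2$, the identity $[e_{12},e_{22},\ldots,e_{22}]=e_{12}$ violates the Engel identity. Hence $A_0/J(A_0)$ is commutative, and every Lie commutator of $A_0$ lies in the nilpotent ideal $J(A_0)$. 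Let $I$ be the two-sided ideal of $A$ generated by $[A,A]$. Every finite subset of $I$ lies in $J(A_0)$ for a suitable finitely generated $A_0$, so $I$ is locally nilpotent, hence a nilring. By $\omega$-categoricity, there is a uniform bound on the number of terms $x[y,z]w$ (with $x,w$ in the unitarization) needed to write an element of $I$. So $I$ is definable, hence $\omega$-categorical, and Cherlin Theorem II gives $I^{(m)}=0$ for some $m$. Moreover $A/I$ is commutative.

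\emph{Step 2: filtration and commuting operators.} Nilpotency of $I$ with $A/I$ commutative does not by itself give Lie nilpotency; upper triangular $2\times 2$ matrices are a counterexample. So the Engel condition must be used again. Consider the finite filtration
\[A\supseteq I\supseteq I^{(2)}\supseteq\cdots\supseteq I^{(m)}=0.\]
Each factor $V_j=I^{(j)}/I^{(j+1)}$ (with $V_0=A/I$) is an $A/I$-bimodule, because the product of an element of $I$ with an element of $I^{(j)}$ lies in $I^{(j+1)}$. For $a\in A$, the map $\mathrm{ad}_a=\ell_a-r_a$ preserves each $I^{(j)}$. On $V_j$, all left and right multiplications by elements of $A$ commute with one another, since they factor through the commutative algebra $A/I$ and left and right actions commute by associativity. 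So the operators $\mathrm{ad}_a$ induced on $V_j$ generate a \emph{commutative} associative algebra $C_j$ of endomorphisms. By the $n$-Engel identity, each generator satisfies $\mathrm{ad}_a^n=0$ on $V_j$, so $C_j$ is generated by nilpotent elements and is therefore a commutative nilalgebra.

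\emph{Step 3: uniform nilpotency via germs.} As in the proof of Theorem~\ref{thm:WCLAimpliesCherlinII}, I would pass to germs $C_{j,v}$ of $C_j$ at an element $v\in V_j$. Then $\omega$-categoricity shows that each element of $C_{j,v}$ is a bounded sum of bounded products of the maps $\mathrm{ad}_{b}$. Hence $C_{j,v}$ is interpretable in $A$ with parameters, so it is an $\omega$-categorical commutative nilalgebra, and Cherlin Theorem I makes it nilpotent. By $\omega$-categoricity, the nilpotency class is uniform in $v$, so there is $s_j$ with $[v,b_1,\ldots,b_{s_j}]\in I^{(j+1)}$ for all $v\in I^{(j)}$ and all $b_i\in A$. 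Concatenating over the $m+1$ factors gives $\gamma_{s_0+\cdots+s_m+1}(L(A))=0$.

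I expect the main obstacle to be the interpretability and uniformity in Step 3. In particular, one must check that $V_j$ and the germ algebra are genuinely interpretable: $I^{(j)}$ must be definable, which needs the bounded-sum representation. Step 1 is less delicate; there the only thing to verify is that $I$ is definable so that Cherlin Theorem II applies.
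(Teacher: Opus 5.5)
Your proof is correct, and its first half matches the paper's. The ideal $I$ generated by $[A,A]$ is nil because $A_0/J(A_0)$ must be commutative for every finite subalgebra $A_0$; your $e_{12},e_{22}$ computation replaces the paper's appeal to Engel's theorem for the finite Lie algebra $L(A_0)$. Cherlin Theorem II then makes $I$ nilpotent.

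The two routes split after that point. The paper only extracts from $I^{(k)}=0$ that $L(A)$ is solvable, since the derived series descends through the powers of $I$. It then cites \cite[Corollary 3.3]{delbee4engelchar3}: $\omega$-categorical solvable $n$-Engel Lie algebras are nilpotent. You instead keep the associative structure. Since $A/I$ is commutative, the operators $\mathrm{ad}_a$ commute on each factor $I^{(j)}/I^{(j+1)}$, so the germ algebras are commutative nilalgebras. The germ-interpretation trick from the proof of Theorem~\ref{thm:WCLAimpliesCherlinII}, together with Cherlin Theorem I, then gives uniform bounds factor by factor.

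Your version is self-contained within this paper and needs only the commutative Cherlin theorem once $I$ is known to be nilpotent. It also shows where the Engel identity is used a second time, and your upper-triangular example shows that second use cannot be avoided. The paper's version is shorter and more modular: once solvability is reached, it appeals to a purely Lie-theoretic statement valid for all $\omega$-categorical solvable Engel Lie algebras, not only those of the form $L(A)$.

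On the concern you raise at the end: $I$ and every $I^{(j)}$ are $\Aut(A)$-invariant, hence $0$-definable by Ryll-Nardzewski. So no bounded-sum representation is needed for their definability. You need it only for the germ algebras, where the usual stabilization of an increasing chain of $\tilde v$-definable sets (for a lift $\tilde v$ of $v$) provides it.
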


\begin{proof}
    Let $A$ be an $\omega$-categorical associative algebra such that $L(A)$ is $n$-Engel. Assume that $L(A)$ is over $\F_p$. Let $I\seq A$ be the two-sided ideal (in the sense of associative algebras) generated by $\gamma_2(L(A))$. Note that $I$ is $\Aut(A)$-invariant.
    
    We claim that $I$ is nil. Let $u\in I$, say $u = c_1[a_1,b_1]d_1+\ldots +c_s[a_s,b_s]d_s$. Let $A_0$ be the subalgebra generated by $(a_i,b_i,c_i,d_i)_i$ and let $J\seq A_0$ be the Jacobson radical of $A_0$. $A_0/J$ is semisimple Artinian, hence by Artin-Wedderburn it is a product of matrix rings over finite fields. As $L(A_0)$ is a finite $n$-Engel Lie algebra, it is nilpotent by Engel's theorem and so is $L(A_0/J)$. It follows that $A_0/J$ is a product of fields, in particular it is commutative. Therefore the ideal $I_0$ of $A_0$ generated by $\gamma_2(L(A_0))$ is contained in $J$, which is nilpotent. It follows that $u = c_1[a_1,b_1]d_1+\ldots +c_s[a_s,b_s]d_s\in I_0$ satisfies $u^n = 0$ for some $n\in \N$.

    Applying Cherlin, we have that $I$ is nilpotent, say $I^{(k)} = 0$. We have $\gamma_2(L(A))\seq I$, hence $[\gamma_2(L(A)),\gamma_2(L(A))]\seq I^{(2)}$ and inductively, we conclude that the Lie algebra $L(A)$ is solvable. By \cite[Corollary 3.3]{delbee4engelchar3}, an $\omega$-categorical $n$-Engel solvable Lie algebra is nilpotent, hence we conclude.
\end{proof}

Our last result uses the following fact of Amberg and Sysak \cite{AmbergSysak2000}.

\begin{fact}\label{fact:ambergsysakengel}
    Let $A$ be a nilalgebra. Then $A^\circ$ is $n$-Engel if and only if $L(A)$ is $m$-Engel.
\end{fact}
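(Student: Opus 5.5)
We plan to treat both directions through a single linear operator on $A$ that is attached to a fixed $h\in A^\circ$. Let $h=1+y$ in $1+A\seq \F_p+A$, and let $c_h\colon x\mapsto h^{-1}xh$ be conjugation, so $c_h=L_{h^{-1}}R_h$. Set $D=c_h-1$. Then
\[D=L_{h^{-1}}(R_h-L_h)=L_{h^{-1}}\,\mathrm{ad}_y .\]
The operator $L_{h^{-1}}$ commutes with $\mathrm{ad}_y=R_y-L_y$, because $h^{-1}=\sum_{i\ge0}(-y)^i$ is a polynomial in $y$ (a finite sum, since $y$ is nil). It is also invertible. Hence $D^k=L_{h^{-k}}\mathrm{ad}_y^k$ for every $k$, and so
\[D^m=0 \iff \mathrm{ad}_y^m=0 .\]
Thus the Lie Engel condition on $L(A)$ is exactly the statement that $D$ is nilpotent of bounded index on $A$, uniformly in $h$. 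The rest of the plan relates nilpotency of $D$ to iterated group commutators.

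For the group side, write $g=1+x$ and $[g,{}_kh]_G=1+u_k$. From $[g',h]_G=g'^{-1}c_h(g')$ and $D(1)=0$ we get the recursion
\[u_{k+1}=(1+u_k)^{-1}D(u_k),\qquad u_0=x .\]
Moreover $D$ is a twisted derivation: $D(ab)=D(a)c_h(b)+aD(b)$.

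\emph{Direction $A^\circ$ $n$-Engel $\Rightarrow$ $L(A)$ $m$-Engel.} We will linearize by scaling $x$ to $\lambda x$. Expanding the recursion gives
\[u_k=\lambda D^k(x)+O(\lambda^2),\]
where the higher-order terms are products of at least two factors of the form $c_h^{\,j}D^{i}(x)$. Since $u_n=0$ for all scalars $\lambda$, we want to extract the linear coefficient and conclude $D^n(x)=0$. Over $\F_p$ this fails when the $\lambda$-degree is $\ge p$. We therefore first pass to the subalgebra generated by $x$ and $y$, where the degree is bounded by the nilpotency class. Then we either use Vandermonde on the homogeneous components of degree $<p$, or replace the single factor $D$ by an iterate $D^{p^e}$; in characteristic $p$, $D^{p^e}=c_{h^{p^e}}-1$, and $h^{p^e}$ is again an element of the group. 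This should yield $D^{m}=0$ for some $m$ depending only on $n$ and $p$.

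\emph{Direction $L(A)$ $m$-Engel $\Rightarrow$ $A^\circ$ $n$-Engel.} Assume $D^m=0$. Unwinding the recursion with the twisted Leibniz rule, $u_k$ is a sum of products $\prod_j c_h^{\,a_j}D^{i_j}(x)$ with total $D$-weight $\sum_j i_j\ge k$ and each $i_j<m$. Hence every surviving term has at least $k/m$ factors, each lying in the right ideal generated by $\mathrm{ad}_y$-images of $x$. The main obstacle is to turn ``many factors'' into ``zero'' uniformly, since $A$ is only assumed nil. We would first try the case where $A$ has bounded nilexponent, as in the $\omega$-categorical setting of this paper. There we use the Nagata--Higman/Kurosh-type bound on products of elements of the subalgebra generated by the images $D^i(x)$ and by $y$. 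Alternatively, we use that $\mathrm{ad}_y^{p^e}=\mathrm{ad}_{y^{p^e}}$ with $p^e\ge m$, which makes $y^{p^e}$ central, and we control the class of the group generated by $g,h$ modulo this central element. If this uniformity cannot be obtained for arbitrary nilalgebras, we will state and use the result only in the bounded-nilexponent case, which is the only case needed later in the paper.
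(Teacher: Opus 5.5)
Your reduction is correct and worth keeping. Indeed $c_h-1=L_{h^{-1}}\mathrm{ad}_y$ with commuting factors, so $L(A)$ is $m$-Engel iff $(c_h-1)^m=0$ for all $h$. The recursion for $u_k$ and the twisted Leibniz rule are also right. (The paper itself gives no proof of this Fact; it cites Amberg--Sysak.) But neither direction is actually carried out.

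\emph{Direction $A^\circ$ $n$-Engel $\Rightarrow$ $L(A)$ $m$-Engel.} The scaling step fails over $\F_p$. Write $u_n(\lambda)=\sum_{j\geq 1}\lambda^j c_j$ with $c_1=D^n(x)$. Vanishing at the $p$ points $\lambda\in\F_p$ only gives $c_1+c_p+c_{2p-1}+\cdots=0$, not $c_1=0$. Passing to $\langle x,y\rangle$ does not bring the degree below $p$. That subalgebra need not be nilpotent (Golod--Shafarevich), and when it is, its class is not bounded in terms of $n,p$ and is in general $\geq p$.

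The $D^{p^e}$ remark is circular. Since $\mathrm{ad}_y^{p^e}=\mathrm{ad}_{y^{p^e}}$ and $(1+y)^{p^e}=1+y^{p^e}$, having $D^{p^e}=0$ for all $h$ is equivalent to $A^\circ/Z(A^\circ)$ having exponent dividing $p^e$. That is just a restatement of ``$L(A)$ is Engel''. The Engel condition alone cannot supply it: Heisenberg groups over $\mathbb{Z}/p^k\mathbb{Z}$ are $2$-Engel with $G/Z(G)$ of exponent $p^k$. So an argument that genuinely exploits the ring structure is missing here.

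\emph{Direction $L(A)$ $m$-Engel $\Rightarrow$ $A^\circ$ $n$-Engel.} You identify the crux, turning ``at least $k/m$ factors'' into $0$, but do not resolve it, and the tools you name do not apply. Bounded nilexponent is not a hypothesis of the Fact. Nagata--Higman fails in characteristic $p$ once the nilexponent is $\geq p$: the non-unital Grassmann algebra over $\F_p$ satisfies $x^p=0$ and is not nilpotent.

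Your last idea is the right one, but it needs two ingredients you do not supply.
\begin{enumerate}
\item Local finiteness: $A$ is nil and satisfies the nontrivial identity $[x,{}_m y]=0$, so Kaplansky's theorem (nil PI-algebras are locally nilpotent) makes $A$ locally nilpotent. Finitely many $1+x_i$ then lie in the finite group $1+B$, so $G=A^\circ$ is locally finite.
\item A uniform class bound: $G/Z(G)$ has exponent dividing $p^e$ for $p^e\geq m$. Zelmanov's solution of the restricted Burnside problem then bounds the class of the finite $2$-generated group $\langle g,h\rangle Z(G)/Z(G)$ by some $c=c(p^e)$, hence $[g,{}_{c+1}h]=1$.
\end{enumerate}

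Finally, retreating to bounded nilexponent does not prove the stated Fact, and in that case the Fact is trivial anyway. Apply Kaplansky's theorem to the relatively free $2$-generated $\F_p$-algebra satisfying $x^N=0$. This shows all $2$-generated subalgebras $B$ are nilpotent of class bounded in terms of $N$ and $p$. Using $\gamma_k(1+B)\seq 1+B^{(k)}$, both $L(A)$ and $A^\circ$ are then Engel outright, with none of the $D$-machinery.
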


Note that $n$ and $m$ may effectively differ, see \cite{DeryabinaKrasilnikov2019}.

\begin{corollary}\label{cor:shalevgroup}
    Let $A$ be an $\omega$-categorical associative  nilalgebra. If $A^\circ$ is $n$-Engel then $A^\circ$ is nilpotent.
\end{corollary}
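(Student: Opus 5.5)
The plan is to pass from the adjoint group to the associated Lie algebra and back, using the two transfer facts stated just above together with Theorem \ref{thm:shalev}.

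First, since $A$ is a nilalgebra and $A^\circ$ is $n$-Engel, Fact \ref{fact:ambergsysakengel} (Amberg--Sysak) yields some $m\in\N$ such that $L(A)$ is $m$-Engel. Here $L(A)$ is the Lie algebra associated with the same associative algebra $A$. The algebra $A$ is $\omega$-categorical by hypothesis, so no interpretability argument is needed at this step.

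Next, Theorem \ref{thm:shalev} applies directly to $A$: it is an $\omega$-categorical associative algebra whose associated Lie algebra is $m$-Engel. Hence $L(A)$ is nilpotent as a Lie algebra.

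Finally, Jennings' Fact \ref{fact:jennings} applies because $A$ is a nilalgebra. It states that $A^\circ$ is nilpotent if and only if $L(A)$ is nilpotent, so $A^\circ$ is nilpotent.

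I expect no real obstacle, since the argument is a chain of three cited results. The only point to check is that each hypothesis is met at the moment it is used. Fact \ref{fact:ambergsysakengel} and Fact \ref{fact:jennings} both require $A$ to be nil, which is given. Theorem \ref{thm:shalev} requires only $\omega$-categoricity of $A$ and the Engel condition on $L(A)$, and does not need nilness. As an optional remark, one could note that nilness is automatically of bounded nilexponent here, by the discussion at the start of the section, though this is not needed.
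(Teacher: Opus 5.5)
Your proposal is correct and matches the paper's proof exactly: Amberg--Sysak (Fact \ref{fact:ambergsysakengel}) makes $L(A)$ $m$-Engel, Theorem \ref{thm:shalev} then gives that $L(A)$ is nilpotent, and Jennings (Fact \ref{fact:jennings}) transfers nilpotency back to $A^\circ$. Your hypothesis checks are also in order: nilness is what the two transfer facts need, and $\omega$-categoricity plus the Engel condition is what Theorem \ref{thm:shalev} needs.
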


\begin{proof}
    By Fact \ref{fact:ambergsysakengel}, $L(A)$ is $m$-Engel for some $m$ hence by Theorem \ref{thm:shalev}, $L(A)$ is nilpotent and hence by Fact \ref{fact:jennings} $A^\circ$ is nilpotent.
\end{proof}



\subsection{From groups to Lie algebras: limitations and open questions}\label{sec:assocliering} First, let us recall basics on what is called \textit{Lie methods in group theory}. This approach was successful in the solution to the restricted Burnside problem \cite{Zelmanov1990,Zelmanov1991}, see also  \cite{Zelmanovfieldslecture}, was extended to various settings, for instance in the study of $n$-Engel groups \cite{Traustason2011EngelGroups}. Let us see how one can generally construct a Lie algebra or a Lie ring from any group $G$. Starting from the lower central series $(\gamma_i)_{i\geq 1}$, each successive quotient $\gamma_i/\gamma_{i+1}$ is an abelian group and one may consider the abelian group given by taking the direct sum of those:
\[L(G) = \bigoplus_{i\geq 1} \gamma_i/\gamma_{i+1}\]
Using identities for the group commutator (where $u^*$ means a conjugate of $u$), such as
\[[x, zy] = [x,y][x,z]^* \quad [xz,y] = [x,y]^* [z,y]\]
or the Hall-Witt identity
\[
 [x,y,z^*]^* [z,x,y^*]^* [y,z,x^*]^* = 1
\]
one checks that the group commutator $[.,.]: \gamma_i/\gamma_{i+1} \times \gamma_j/\gamma_{j+1}\to \gamma_{i+j}/\gamma_{i+j+1}$ may be extended to all elements of $L(G)$ and that the structure on $(L(G),+,[.,.])$ is that of a Lie ring. This ring is always graded by the sequence $(\gamma_i/\gamma_{i+1})_i$. The exponent of $G$ becomes the characteristic of $L(G)$. In particular, if $G$ has prime exponent $p$, then $L(G)$ is a Lie algebra over $\F_p$. In general, this construction is far from being meaningful as $L(G)$ might be trivial (for instance if $G$ is simple), but there are few cases where $L(G)$ detects a property of $G$. For instance, if $G$ is a finitely generated $p$-group of bounded exponent, then the LCS of $G$ is stationary if and only if $L(G)$ is nilpotent. The solution to the restricted Burnside problem is based on this fact.

Note that if one does not want to deal with Lie rings which are not algebras, they may consider the \textit{Zassenhaus series} $(d_n)_{n\geq 1}$, defined by the formula $d_{n} = \prod_{jp^k\geq n} \gamma_j^{p^k}$. Then the direct sum of the successive quotients
\[\bigoplus_{n\geq 1} d_n/d_{n+1}\]
always have the structure of a Lie algebra over $\F_p$.

Let us now specify this construction to the case of an $\omega$-categorical $p$-group $G$. By $\omega$-categoricity, the group $G$ has only finitely many characteristic subgroups, and all of those are definable. In particular, the series $(\gamma_i)_{i\geq 1}$ is stationary, that is, there exists $i_0$ such that $\gamma_j = \gamma_{i_0}$ for all $j\geq i_0$. It follows easily that the $i_0$-th member of the LCS of $L(G)$ vanishes, $L(G)$ is a nilpotent Lie ring. Note that the Lie structure on $L(G)$ is $0$-definable on the imaginary sorts $\gamma_1/\gamma_2\times \ldots \times \gamma_{i_0-1}/\gamma_{i_0}$. We conclude:
\begin{fact}
    Let $G$ be an $\omega$-categorical group, then the associated Lie ring $L(G)$ is interpretable, $\omega$-categorical and nilpotent.
\end{fact}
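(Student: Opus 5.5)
The plan is to establish the three properties in turn: interpretability, $\omega$-categoricity, and nilpotency. Each relies on the fact that $\Aut(G)$ has only finitely many orbits on each $G^n$.

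\emph{Interpretability.} First I show that every term of the lower central series is $0$-definable.
\begin{itemize}
\item By $\omega$-categoricity, $G$ is uniformly locally finite. Hence the number of orbits on pairs bounds the number of distinct commutators $[g,h]$ up to automorphism.
\item Each $\gamma_i$ is a characteristic subgroup, so it is $\Aut(G)$-invariant.
\item An invariant subset of $G$ is a union of finitely many orbits. By Ryll-Nardzewski, each orbit is $0$-definable, so $\gamma_i$ is $0$-definable.
\end{itemize}
Next, the lower central series is stationary. The chain $\gamma_1\supseteq\gamma_2\supseteq\cdots$ consists of unions of orbits on $G$, and there are only finitely many such orbits. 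So the chain stabilizes at some $i_0$, and $\gamma_i/\gamma_{i+1}$ is trivial for $i\geq i_0$. Therefore $L(G)=\bigoplus_{i<i_0}\gamma_i/\gamma_{i+1}$ is a finite direct sum.

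Each quotient $\gamma_i/\gamma_{i+1}$ is an imaginary sort, namely $\gamma_i$ modulo the $0$-definable equivalence relation $xy^{-1}\in\gamma_{i+1}$. So $L(G)$ can be realised as the product sort $\prod_{i<i_0}\gamma_i/\gamma_{i+1}$. Its operations are $0$-definable:
\begin{itemize}
\item addition is induced componentwise by the group multiplication;
\item the bracket of homogeneous components $\gamma_i/\gamma_{i+1}\times\gamma_j/\gamma_{j+1}\to\gamma_{i+j}/\gamma_{i+j+1}$ is induced by the group commutator, and it is well defined by the standard identities recalled above;
\item the full bracket is obtained from the homogeneous ones by bilinear extension, which is again a definable formula since there are only finitely many components.
\end{itemize}
This gives the interpretation.

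\emph{$\omega$-categoricity.} This follows from the standard fact that a structure interpretable (with imaginaries, without parameters) in an $\omega$-categorical structure is $\omega$-categorical. Concretely, $\Aut(G)$ acts on $L(G)^n$ through the interpretation. Each tuple of $L(G)^n$ is represented by a tuple of $G^{n i_0}$, on which there are finitely many orbits. Hence there are finitely many orbits on $L(G)^n$, and even the automorphism group of $L(G)$ itself has finitely many orbits.

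\emph{Nilpotency.} Using the grading, the $k$-th term $\gamma_k(L(G))$ lies in $\bigoplus_{i\geq k}\gamma_i/\gamma_{i+1}$. This is an easy induction, since brackets add degrees. For $k=i_0$ this sum is $0$, so $L(G)$ is nilpotent of class less than $i_0$.

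The only step needing care is the claim that each $\gamma_i$ is definable. Characteristic subgroups are automatically invariant, so Ryll-Nardzewski settles it. No real obstacle remains; the rest is bookkeeping about imaginary sorts.
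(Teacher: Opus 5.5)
Your proof is correct and follows the same line as the paper's brief argument. Characteristic subgroups are invariant, hence finite unions of $0$-definable orbits, so the lower central series stabilizes at some $i_0$. The Lie structure is $0$-definable on the imaginary sort $\gamma_1/\gamma_2\times\cdots\times\gamma_{i_0-1}/\gamma_{i_0}$, and the grading forces $\gamma_{i_0}(L(G))=0$. Your opening remark about uniform local finiteness and counting commutators is never used and can be dropped.
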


Of course, nothing depends on the particular filtration $(\gamma_i)_{i\geq 1}$ and the same statement will follow as soon as the associated Lie ring is constructed out of a characteristic series.

This should be considered as an admission of failure for the strategy proposed in the title of this article and any reasonable reader should already think of getting back to their business. Nonetheless, there are other ways of defining a Lie algebra from a group, as we will see in the next section.

The previous fact at least has the good taste of suggesting that the structure of an $\omega$-categorical group might be controlled by $\omega$-categorical nilpotent Lie rings and that a classification of those (which might be hopeless) would certainly be helpful for cracking the structure of $\omega$-categorical groups.

More importantly, the fact above explains the absence of any obvious connections between the WCG and the WCLA, as one could have hoped to deduce the WCG from the WCLA using the associated Lie ring. In fact, it raises the question of any connection at all between the WCLA and the WCG.
\begin{question}~
\begin{itemize}
    \item Does the WCG follow from the WCLA?
    \item Does the WCLA follow from the WCG?
    \item Are the WCG and the WCLA two independent statements?
\end{itemize}
\end{question}
Nonetheless, there are some cases where the WCG and the WCLA are related, for instance if they are in Lazard correspondence, as we will see in Section \ref{sec:lazardand4-Engel}.

Before ending this section, let us mention a last question, which concerns particular examples of $\omega$-categorical groups and Lie algebras. Given a fixed $c\in \N$ and prime $p>c$, the model-companion of the theory of nilpotent groups of class $c$ and exponent $p$ exists and is $\omega$-categorical, let us denote $\mathbf{G}_{c,p}$ the unique countable model. Similarly, the model-companion of the theory of nilpotent Lie algebras of class $c$ over $\F_p$ exists and is $\omega$-categorical, let us denote $\mathbf{L}_{c,p}$ the unique countable model. Those were studied recently in \cite{DINCI}, building on the work of Baudisch \cite{Baudisch2,Baudisch3} and Maier \cite{Maierexpp}. The two structures $\mathbf{G}_{c,p}$ and $\mathbf{L}_{c,p}$ are in Lazard correspondence (see Subsection \ref{subsec:lazard}), and are thus interdefinable. In general, starting from a group $G$ which is in Lazard correspondence with a Lie algebra $M$, the associated Lie ring $L(G)$ and $M$ might be completely different. Nonetheless, the genericity of $\mathbf{G}_{c,p}$ might circumvent this phenomenon, and we conjecture the answer to the following question to be positive.
\begin{question}
    Is $L(\mathbf{G}_{c,p})$ definably isomorphic to $\mathbf{L}_{c,p}$?
\end{question}
A positive answer to this question would imply the elimination of the imaginary sort $L(\mathbf{G}_{c,p})$, but we do not expect that the quotients $\mathbf{G}_{c,p}/\gamma_i$ are eliminable. We do expect that the imaginaries of $\mathbf{G}_{c,p}$ are eliminable up to those quotients.

\section{The leftward direction: the Wilson conjecture for $3$-Engel groups and $4$-Engel $5$-groups}\label{sec:lazardand4-Engel}

\subsection{Small values of $n$ and $p$} The goal of this section is the study of $\omega$-categorical $n$-Engel $p$-groups for small values of $n$ and $p$. Let us first recall some general facts about those notions.

Recall that a group $G$ is $n$-Engel if it satisfies the identity 
\[[x,\underbrace{y,\ldots, y}_{n\text{ times}}] = 1\]

The celebrated work of Zelmanov \cite{Zelmanov1990,Zelmanov1991} on the restricted Burnside problem yields that $n$-Engel Lie algebras are locally nilpotent. The corresponding statement for $n$-Engel groups is very much open.
\begin{question}
    Are $n$-Engel groups locally nilpotent?
\end{question}
The question seems to have been around for a long time, tracing back to the work of Gruenberg \cite{Gruenberg1953}.
It has a positive answer for $n\leq 4$, the case $n = 4$ was a significant achievement by Havas and Vaughan-Lee \cite{HavasVaughanLee20054Engelgroups} in 2005. The proof makes substantial use of computer algebra and builds on earlier work, notably \cite{traustason2gen4engel}. In the case of $\omega$-categorical $n$-Engel groups, the local nilpotency follows from local finiteness and the following classical theorem of Zorn (see e.g.  \cite[Theorem 12.3.4]{RobinsonZornref}).
\begin{fact}(Zorn) 
    Any finite $n$-Engel group is nilpotent.
\end{fact}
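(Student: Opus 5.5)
The plan is to argue by minimal counterexample and reduce to a minimal non-nilpotent group, whose structure is classical (Schmidt's theorem). Then we exhibit an explicit nonvanishing long commutator $[x,y,\ldots,y]$.

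First, the $n$-Engel identity passes to subgroups and to quotients. So take a finite $n$-Engel group $G$ that is not nilpotent and has minimal order. Then every proper subgroup of $G$ is nilpotent, i.e. $G$ is a minimal non-nilpotent group. By Schmidt's theorem, $G = P\rtimes Q$ where:
\begin{itemize}
\item $P$ is a normal Sylow $p$-subgroup;
\item $Q=\langle y\rangle$ is a cyclic $q$-group with $q\neq p$;
\item $y$ does not centralize $P$;
\item $V:=P/\Phi(P)$ is an elementary abelian $p$-group on which $Q$ acts irreducibly and nontrivially.
\end{itemize}
By minimality, we may also pass to the quotient $\bar G = G/\Phi(P)$. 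This quotient is still non-nilpotent, since $y$ acts nontrivially on $V$ (otherwise $y$ would centralize $P$ by the standard Burnside-type coprime action argument). Hence we may assume $P=V$ is abelian.

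Now write $V$ additively as an $\F_p[Q]$-module. For $v\in V$ the group commutator is $[v,y]=v^{-1}v^{y}$, which in additive notation is $v(y-1)$. Since $V$ is abelian and normal, iterating gives
\[[v,\underbrace{y,\ldots,y}_{k}] = v(y-1)^k.\]
The order of $y$ is coprime to $p$, so coprime action (Maschke/Fitting) gives $V = C_V(y)\oplus [V,y]$. Moreover $y-1$ restricted to $[V,y]$ is injective, because its kernel there is $C_V(y)\cap[V,y]=0$. Hence $y-1$ is bijective on $[V,y]$.

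Since $y$ acts nontrivially, $[V,y]\neq 0$. Pick a nonzero $v\in[V,y]$. Then $v(y-1)^n\neq 0$ for every $n$, i.e. $[v,y,\ldots,y]\neq 1$ with $n$ copies of $y$. This contradicts the $n$-Engel hypothesis, so no counterexample exists.

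The main obstacle is the structural input: invoking Schmidt's classification of minimal non-nilpotent groups, and checking that passing to $P/\Phi(P)$ keeps the action nontrivial. If we wanted to avoid Schmidt, the fallback would be Baer's theorem that Engel elements of a finite group lie in the Fitting subgroup, which gives $G=F(G)$ nilpotent directly; but that is a comparable black box. The coprime-action step itself is routine.
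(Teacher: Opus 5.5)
Your proof is correct. The paper gives no argument of its own and only cites Robinson's textbook (Theorem 12.3.4); your argument (minimal counterexample, Schmidt's theorem on minimal non-nilpotent groups, Burnside's lemma to pass to $P/\Phi(P)$, and the coprime decomposition $V=C_V(y)\oplus[V,y]$ on which $y-1$ is invertible) is essentially the standard proof behind that citation.

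Two minor remarks. Since $v(y-1)^k\neq 0$ for every $k$, your argument actually proves Zorn's stronger statement for finite Engel groups with no uniform $n$. Also, you need neither the irreducibility clause of Schmidt's theorem nor the minimality of $G$ to pass to $G/\Phi(P)$, because that quotient is $n$-Engel simply by being a quotient of $G$.
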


Of course, any $\omega$-categorical $p$-group is locally nilpotent by local finiteness.

For small values of $n$ and $p$, the situation is given by the following table, where the intersection of an ``$n$-Engel" row and a ``$p$" column is -if it exists- a bound on the nilpotency class of $n$-Engel $p$-groups. If it is $\infty$, it means that those are in general non-nilpotent.

\begin{center}
\begin{tblr}{
 cells={c},
 vline{1-2,7} = {-}{},
 hline{1-2,6} = {-}{}
}
$p=$ & $2$ & $3$ & $5$ & $7$ & $>7$ \\
$2$-Engel & 2 & $3$ & $2$ & $2$ & $2$ \\
$3$-Engel & $\infty$ & $4$ & \hspace{-.5em} \tikzmark{left} $\infty$ & $4$ & $4$ \\
$4$-Engel & $\infty$ & $\infty$ & $\infty$ & $7$ & $7$ \\
$5$-Engel & $\infty$ & $\infty$ & $\infty$ & $\infty$ & $10$
\end{tblr}
\end{center}

Solvability is known in certain cases.

\begin{fact}\cite{Heineken1961, AbdollahiTraustason2002}
\begin{itemize}
    \item Every $3$-Engel $2$-group is solvable.
    \item Every $4$-Engel $3$-group is solvable.
\end{itemize} 
\end{fact}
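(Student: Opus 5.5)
Since this Fact is quoted from \cite{Heineken1961, AbdollahiTraustason2002} rather than proved here, what follows is only a plan for recovering it. The plan is to reduce each item to a bounded-length commutator law on finite groups, and then derive that law from structural information on normal closures of elements.

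\textbf{Reduction to finite groups with a uniform bound.} Being solvable of derived length at most $d$ is a group law in $2^d$ variables, namely the vanishing of the $d$-th derived commutator. So it suffices to find, in each case, a $d$ depending only on $(n,p)$ such that every $2^d$-generated $n$-Engel $p$-group has derived length $\le d$. By local nilpotency of $n$-Engel groups for $n\le 4$ (Heineken for $n=3$, Havas--Vaughan-Lee for $n=4$), a finitely generated $3$-Engel $2$-group or $4$-Engel $3$-group is a finitely generated nilpotent torsion group, hence finite. We may therefore work inside finite $p$-groups, where Zorn's theorem and ordinary commutator calculus are available. The content of the statement is that $d$ is independent of the number of generators, since without a uniform bound solvability does not follow.

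\textbf{Nilpotency of normal closures.} The next step is to show that for every $x\in G$, the normal closure $x^G$ is nilpotent of class bounded in terms of $(n,p)$. For $3$-Engel groups this is Heineken's classical result: $x^G$ has class at most $2$, which follows from expanding $[x,y,y,y]=1$ and linearising. For $4$-Engel $3$-groups I would follow Traustason's analysis of $4$-Engel groups. The aim is to show that $\langle x,x^y\rangle$ is nilpotent of small class, and then that $x^G$ is nilpotent. I expect the $3$-torsion to obstruct the characteristic-free argument, so this part should be checked on a free nilpotent quotient of the relevant $4$-Engel variety with a nilpotent quotient algorithm.

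\textbf{From nilpotent normal closures to bounded derived length.} Having every $x^G$ nilpotent of class $\le c$ does not by itself bound the derived length. The plan is to show that $[x^G,y^G]$ lies in a normal subgroup of bounded class and is generated by bounded-length commutators in $x,y$ and their conjugates. Concretely, I would use the bounded exponent coming from the Engel condition together with the $p$-group structure to pass to the associated Lie algebra over $\F_p$ (Zassenhaus filtration). There the corresponding statement is an identity in a finitely generated free object of the relevant Engel variety, and it can be verified by computer in a bounded-dimensional nilpotent quotient. The main obstacle is exactly this uniformity step: turning local and element-wise information into a derived-length bound independent of the number of generators. I would first try an induction on generators using that $G=\langle x^G\rangle\langle g_2,\dots,g_r\rangle$ with $x^G$ nilpotent of bounded class, bounding the derived length of $G/x^G$ by induction. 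Since this naive induction lets the derived-length bound grow with $r$, the uniformity has to be fixed by showing that the induction stabilises after a number of generators depending only on $c$.
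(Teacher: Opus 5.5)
The paper does not prove this Fact; it is quoted from the literature. Your outline has a genuine gap. The reduction to finitely generated, hence finite, subgroups is fine. But the derived-length bound independent of the number of generators, which is the whole content, is never obtained.

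The one mechanism you specify for it is an induction on generators that ``stabilises after a number of generators depending only on $c$''. This cannot work, because it is blind to the prime:
Heineken's bound on the normal closures $x^G$ holds in every $3$-Engel group, and $3$-Engel groups are locally nilpotent. Yet Bachmuth and Mochizuki constructed non-solvable $3$-Engel groups of exponent $5$. Such a group has every feature your argument uses; it even has bounded exponent. Its finitely generated subgroups must have unbounded derived length, since otherwise the corresponding derived law would hold in the whole group. So no stabilisation of this kind holds in general, and any proof must use $p=2$, resp.\ $p=3$, in an essential way.

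The computer-algebra step has two further problems. It does not say which identity would yield solvability. It also rests on a false premise: an Engel condition does not bound the exponent. For example, the Pr\"ufer group $C_{2^\infty}$ is abelian, hence $3$-Engel, and has unbounded exponent. So a check in a fixed-exponent free nilpotent quotient, or in its Zassenhaus Lie algebra over $\F_p$, does not cover the $p$-groups in the statement.

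What is missing is a law that sees the prime, and for the first item it is already in the paper. By Fact~\ref{fact:3Engelgroups} (Gupta--Newman), in a $3$-Engel group $G$ the subgroup $G^5$ satisfies $[[x_1,x_2,x_3],[x_4,x_5],x_6]=1$. If $G$ is a $2$-group, every element of order $2^k$ is a fifth power because $5$ is invertible modulo $2^k$. Hence $G=G^5$ and the law holds in $G$.

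Now $\gamma_3(G)$ and $\gamma_2(G)$ are generated by the commutators $[x_1,x_2,x_3]$ and $[x_4,x_5]$, and the law makes $[u,v]$ central for such generators. Commutator expansion then gives $[\gamma_3(G),\gamma_2(G)]\le Z(G)$. Since $G''\le\gamma_3(G)$, we get $G'''\le Z(G)$ and $G^{(4)}=1$. This is exactly the device used in the proof of Corollary~\ref{cor:omegacat3Eng5grp}, and it needs neither finiteness, nor normal closures, nor computer algebra.

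For $4$-Engel $3$-groups an analogous $3$-specific input is needed. The Abdollahi--Traustason paper cited for it works with power subgroups $G^{p^r}$; compare Fact~\ref{fact:abdollahitraustason}. Nothing in your outline plays that role, so as written it establishes neither item.
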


In the $\omega$-categorical case, an early theorem of Wilson give an equivalence between solvability and nilpotent.
\begin{fact}(J. S. Wilson, \cite{wilson1981})\label{fact:wilson}
    Let $G$ be an $\omega$-categorical locally nilpotent group. If $G$ is solvable, then $G$ is nilpotent.
\end{fact}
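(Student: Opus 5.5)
Induction on the derived length $d$ of $G$, with the inductive statement applied to all $\omega$-categorical locally nilpotent groups at once. The case $d\le 1$ is trivial, since then $G$ is abelian. For $d>1$, let $A$ be the last nontrivial term of the derived series. Then $A$ is abelian, characteristic, and $0$-definable, because in an $\omega$-categorical structure the derived subgroups are definable with bounded commutator length. The quotient $G/A$ is interpretable, hence $\omega$-categorical, and it is locally nilpotent and solvable of derived length $d-1$. By induction $G/A$ is nilpotent, say of class $c$. It then suffices to show that $A$ lies in some term $Z_m(G)$ of the upper central series, because then $G$ is nilpotent of class at most $c+m$.

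To get this, look at the action of $G$ on $A$ by conjugation, written additively as $a\cdot (g-1) = [a,g]$. Let $R$ be the (nonunital) subring of $\End(A)$ generated by the maps $a\mapsto [a,g]$ for $g\in G$. Since $A$ is abelian, $R$ is commutative modulo the action of $\gamma_2(G)$, and the action factors through $G/A$, which is nilpotent. The goal is to show that $A R^{(m)}=0$ for some $m$, i.e. $[a,g_1,\ldots,g_m]=1$ uniformly in $a\in A$ and $g_i\in G$.

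I plan to reuse the germ trick from Theorem~\ref{thm:WCLAimpliesCherlinII}. Fix $a\in A$ and consider germs at $a$ of elements of $R$. Local nilpotency makes every germ nil: $a$ and finitely many $g_i$ generate a nilpotent subgroup, so long commutators vanish. By $\omega$-categoricity these germs form an interpretable ring, as in the Claim there.

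The main obstacle is that this germ ring is not commutative when $G/A$ is not abelian, so Cherlin Theorem~I does not apply directly, whereas Cherlin Theorem~II only needs finiteness of orbits on $6$-tuples. I would first try to handle it by a further induction along the finitely many terms $\gamma_j(G)A/A$. For each $j$, consider the action of $\gamma_j(G)$ on the characteristic sections $[A,{}_k G]/[A,{}_{k+1}G]$. On a section where $G$ acts through the abelian quotient $G/\gamma_2(G)A$, the commutative case applies. Alternatively, apply Cherlin Theorem~II directly to the nil germ ring, which is $\omega$-categorical. That gives $a R^{(s)}=0$ for some $s$ depending on $a$, and $\omega$-categoricity makes $s$ uniform, say $m$. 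Hence $A\le Z_m(G)$, and $G$ is nilpotent.
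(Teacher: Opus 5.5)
The paper gives no proof of this Fact; it is quoted from Wilson. So your argument has to stand on its own, and it has a genuine gap at the key step. The germ construction of Theorem~\ref{thm:WCLAimpliesCherlinII} produces a \emph{ring} only because the operators commute at $a$. With $r\sim_a s\iff ar=as$, the set $\{r\in R: ar=0\}$ is always a right ideal of $R$. However, $[r][t]:=[rt]$ is well defined only if $at=at'$ forces $art=art'$, and that is exactly what commutativity supplies. When $G/A$ is non-abelian this fails, and the germs at $a$ form only the cyclic right $R$-module $aR$. For example, let $\mathrm{UT}_3(\F_p)$ act on row vectors in $\F_p^3$, take $h$ whose only nonzero entry above the diagonal is the $(2,3)$-entry, and take $g$ with $g_{12}\neq 0$. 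Then $e_1(h-1)=0$ but $e_1(g-1)(h-1)\neq 0$. So there is no nil germ ring to which Cherlin Theorem~II could be applied. The genuine rings available, $R$ itself or $R$ modulo the annihilator of the submodule generated by $a$, are determined by their action on an infinite set, so they are not evidently interpretable and hence not evidently $\omega$-categorical. Your other route is vacuous as written. $G$ acts trivially on $[A,{}_kG]/[A,{}_{k+1}G]$ by definition, and the whole difficulty is to show that this chain, which stabilises by $\omega$-categoricity, stabilises at $1$ rather than at some $B\neq 1$ with $[B,G]=B$.

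The missing idea is to arrange a commutative operator ring before taking germs. One way is to apply the induction hypothesis to $G'$, which is $0$-definable, hence $\omega$-categorical, of derived length $d-1$. Then use P.~Hall's criterion ($N\trianglelefteq G$ nilpotent with $G/N'$ nilpotent implies $G$ nilpotent) to reduce to metabelian $G$. With $A=G'$, the maps $\delta_g\colon a\mapsto[a,g]$ commute, since $a^{gh}=a^{hg[g,h]}=a^{hg}$ because $[g,h]\in A$.

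A second way stays closer to your set-up and to the $k>1$ step of Theorem~\ref{thm:WCLAimpliesCherlinII}. Induct on $c$ such that $\gamma_{c+1}(G)$ acts trivially on a definable abelian normal section $X$. The $\delta_h$ with $h\in\gamma_c(G)$ commute on $X$ because $[\gamma_c(G),\gamma_c(G)]\le\gamma_{c+1}(G)$. So the germ trick and Cherlin Theorem~I give $[X,{}_l\gamma_c(G)]=1$ for a uniform $l$. On each $G$-invariant section $[X,{}_j\gamma_c(G)]/[X,{}_{j+1}\gamma_c(G)]$ the subgroup $\gamma_c(G)$ acts trivially, so the induction applies.

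Once commutativity is in place, the rest of your plan goes through: interpretability via bounded representations, nilness from local nilpotency, and uniformity over the finitely many orbits on $A$.
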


By putting the above together, we have the following.

\begin{corollary}\label{cor:3engel2and3}
Let $G$ be an $\omega$-categorical $3$-Engel $2$-group or $4$-Engel $3$-group, then $G$ is nilpotent. 
\end{corollary}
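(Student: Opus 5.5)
The plan is to combine the two facts stated just before the corollary; no new machinery is needed.

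Let $G$ be an $\omega$-categorical group that is either a $3$-Engel $2$-group or a $4$-Engel $3$-group. First, $G$ is locally nilpotent. Since $G$ is $\omega$-categorical, it is uniformly locally finite by Ryll-Nardzewski, so every finitely generated subgroup $H\leq G$ is finite. Such an $H$ is still $n$-Engel, hence nilpotent by Zorn's theorem. (Alternatively, a finite $p$-group is already nilpotent.)

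Second, $G$ is solvable. The fact attributed to Heineken and to Abdollahi--Traustason says that every $3$-Engel $2$-group and every $4$-Engel $3$-group is solvable. This applies to $G$ directly, with no use of $\omega$-categoricity.

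Third, we now have an $\omega$-categorical, locally nilpotent, solvable group. Wilson's theorem (Fact \ref{fact:wilson}) then gives that $G$ is nilpotent.

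The only point needing care is that the cited solvability theorems are unconditional in the classes considered. Heineken's result is for $3$-Engel $2$-groups, and Abdollahi--Traustason's is for $4$-Engel $3$-groups; neither assumes finite generation. I expect no real obstacle beyond checking this hypothesis match.
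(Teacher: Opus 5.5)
Your proposal is correct and is the paper's own argument: the corollary is obtained by combining the solvability results of Heineken and Abdollahi--Traustason with Wilson's theorem (Fact~\ref{fact:wilson}), with local nilpotency coming from the local finiteness of $\omega$-categorical groups. Your first step, which shows $G$ is locally finite, also covers any locally-finite hypothesis under which the cited solvability theorems may be stated.
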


The interesting case is the case of $3$-Engel and $4$-Engel $5$-groups, which will occupy the rest of this section.

\subsection{An exceptional instance of the Lazard correspondence} \label{subsec:lazard}
A main reference for this section is \cite{Khukhro1998}, see also \cite{Cicaloetal}. The following infinite expression is the Baker-Campbell-Hausdorff (BCH) formula:

\begin{align*}
f(x,y)=x+y+\frac{1}{2}[x,y]
&-\frac{1}{12}[x,y,x]
+\frac{1}{12}[x,y,y]
-\frac{1}{24}[x,y,x,y]
+\frac{1}{720}[x,y,x,x,x]
-\frac{1}{360}[x,y,x,x,y]
\\
&+\frac{1}{120}[x,y,x,y,x]
-\frac{1}{120}[x,y,x,y,y]
+\frac{1}{360}[x,y,y,y,x]
-\frac{1}{720}[x,y,y,y,y]
+\frac{1}{1440}[x,y,x,x,x,y]
+\cdots .
\end{align*}
It admits ``inverse" formulas, given by the following two expressions

\begin{align*}
h_1(x,y)=
xy[x,y]^{-\frac{1}{2}}
[x,y,x]^{\frac{1}{12}}
&[x,y,y]^{-\frac{1}{12}}
[x,y,x,x]^{-\frac{1}{24}}
[x,y,y,y]^{\frac{1}{24}}
\\
&\cdot[x,y,x,x,x]^{\frac{19}{720}}
[x,y,x,x,y]^{-\frac{23}{720}}
[x,y,x,y,x]^{\frac{1}{30}}
[x,y,x,y,y]^{\frac{1}{20}}
[x,y,y,y,x]^{-\frac{37}{720}}
[x,y,y,y,y]^{-\frac{19}{720}}
\cdots
\end{align*}

\begin{align*}
h_2(x,y)=
[x,y]
[x,y,x]^{-\frac{1}{2}}
[x,y,y]^{-\frac{1}{2}}
\cdot&
[x,y,x,x]^{\frac{1}{3}}
[x,y,x,y]^{\frac{1}{4}}
[x,y,y,y]^{\frac{1}{3}}
\\
&\cdot
[x,y,x,x,x]^{-\frac{1}{4}}
[x,y,x,x,y]^{-\frac{1}{4}}
[x,y,x,y,x]^{\frac{1}{12}}
[x,y,x,y,y]^{-\frac{1}{6}}
[x,y,y,y,y]^{-\frac{1}{4}}
\cdots
\end{align*}

The BCH and its inverse are infinite polynomial expressions, respectively in the language of Lie rings and of groups. In an appropriate setting, those can be used to define a group in the case of BCH and a Lie ring in the case of the inverse BCH. When looking at the formulas, two obstructions occur. The first is that those expressions are infinite, which is usually compensated by having a nilpotency assumption. The second obstruction concerns the coefficients/exponents, which suggest a need to divide, or take roots. A first setting where those obstructions disappear is in torsion-free divisible nilpotent groups. This is the setting of the \textit{Mal'cev} correspondence.

The setting of interest here is that of $p$-groups. A fact that is slightly apparent in the formula above, say in the inverse BCH, is that the primes divisors of the denominator of the exponent of a bracket are always smaller than or equal to the length of the bracket. The same is true in the BCH, for the coefficients. This implies that in a $p$-group of nilpotency class $c<p$, the exponents make sense as such
groups admit unique $k$-th roots for $k$ coprime to $p$. This is the setting of the \textit{Lazard correspondence}, which can be roughly summarized as follows.
\begin{enumerate}
    \item In a $p$-group $G$ of exponent $p^n$ and nilpotency class $c<p$, the operation $x+y = h_1(x,y)$ and $[x,y] = h_2(x,y)$ defines a Lie ring structure $L_G$ on the underlying set of $G$. 
    \item In a Lie ring $L$ of characteristic $p^n$ and nilpotency class $c<p$, the operation $x*y = f(x,y)$ defines a group structure $G_L$ on the underlying set of $L$.
    \item The two operations $G\leadsto L_G$ and $L\leadsto G_L$ are inverse to each other.
\end{enumerate}
(This yields an \textit{equivalence of categories} between those two classes of structures. In particular, normal subgroups are mapped to ideals, the nilpotency classes coincide etc.)

Let us now take a step back from the previous paragraph. It is apparent that, for instance, in order to make sense of the expressions $h_1,h_2$ in a given group $G$ of exponent $p^n$, it is enough that the nilpotency class of every $2$-generated subgroup of $G$ is less than $p$. The fact that $h_1$ and $h_2$ define a Lie ring structure on the underlying set of $G$, however, would have to be checked for each one of the axioms of Lie rings. Certainly, associativity of $h_1$ 
\[h_1(h_1(x,y),z) = h_1(x,h_1(y,z))\]
or other axioms such as bilinearity or the Jacobi identity will have to be checked in three-generated subgroups of $G$. In order to be safe, assuming that the nilpotency class of $3$-generated subgroups is less than $p$ always allows to conclude that the operation $x+y = h_1(x,y)$ and $[x,y] = h_2(x,y)$ define a Lie ring on the underlying set of $G$. The converse, starting from a Lie ring of characteristic $p^n$ in which every $3$-generated subring is nilpotent of class less than $p$ and defining a group via $f(x,y)$ holds for the same reason. We have established the \textit{local} version of the Lazard correspondence, which states that the BCH formula and its inverse allows to pass between the following categories:
\begin{itemize}
    \item groups of exponent $p^n$ in which every $3$-generated subgroup is nilpotent of class $<p$,
    \item Lie rings of characteristic $p^n$ in which every $3$-generated subring is nilpotent of class $<p$.
\end{itemize}

It should be clear that, while the assumption that $2$-generated subgroups being nilpotent of class $<p$ is crucial in order to have well-defined functions $h_1$ and $h_2$, the assumption that $3$-generated subgroups are nilpotent of class $<p$ is only there for applying the ``global" version of the Lazard correspondence. 
The latter assumption has no reason to be necessary in order to define a Lie algebra, and indeed, it is not. This phenomenon will occur in $3$-Engel groups of exponent $5$ and in $4$-Engel groups of exponent $7$. The following is from \cite{Gupta_Newman_1989}.

\begin{fact}\label{fact:3Engelgroups}
    Let $G$ be a $3$-Engel group. Then every $n$-generated subgroup of $G$ is nilpotent of class at most $2n-1$. Further, the group $G^5$ generated by $5$-th powers satisfies the identity
    \[[[x_1,x_2,x_3],[x_4,x_5],x_6] = 1\]
Bounds are sharp.
\end{fact}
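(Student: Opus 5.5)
This Fact is quoted from Gupta and Newman \cite{Gupta_Newman_1989}, so the plan reconstructs their argument rather than giving a new one. The argument has three parts: the class bound, the identity for $G^5$, and sharpness.

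\textbf{Class bound.} We start from the classical structure of $3$-Engel groups due to Heineken. First, normal closures of elements are nilpotent of class at most $2$: for all $x,g,h$ we have $[x,x^g,x^h]=1$. Second, $G$ is locally nilpotent. We then argue by induction on $n$. For $H=\langle x_1,\ldots,x_n\rangle$, put $N=\langle x_n\rangle^H$, which is nilpotent of class $\le 2$, so that $H=\langle x_1,\ldots,x_{n-1}\rangle N$. By induction the quotient $H/N$ has class $\le 2n-3$. To show that $H$ has class $\le 2n-1$, it suffices to prove that every commutator of weight $2n$ in the generators vanishes.

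To do this we expand multilinearly. Every weight-$2n$ commutator either involves $x_n$ at least three times, or involves some generator at least three times. In the first case it is killed by the class-$2$ normal closure. In the second case we linearise the $3$-Engel identity $[y,x,x,x]=1$ and show that any commutator containing one entry three times, suitably rearranged, vanishes modulo terms of higher weight.

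The main obstacle is this counting and rearrangement. Commutators of weight $2n$ with each generator appearing at most twice must still be shown trivial. This requires the refined identity that in $3$-Engel groups $[y,x,x]$ commutes with $x$ and with $x^y$, together with a Hall–Witt collection argument. Gupta–Newman carried out this step by a careful hand or computer collection, and I would do the same.

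\textbf{The identity for $G^5$.} It suffices to work in the free $3$-Engel group on six generators, which is nilpotent of class $\le 11$ by the first part. We compute in its Lie ring, or better by a nilpotent quotient algorithm, and check that $[[x_1,x_2,x_3],[x_4,x_5],x_6]$ lies in the $5$-torsion part. This torsion is killed upon passing to fifth powers, that is, the elements $[[y_1^5,\ldots]]$ vanish. I would verify this computationally with NQA, as the paper does elsewhere.

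\textbf{Sharpness.} For sharpness we exhibit finite $3$-Engel groups attaining class $2n-1$ on $n$ generators; the relatively free $3$-Engel groups of exponent $5$ are the natural candidates. For the identity, we exhibit a group in which the commutator with one fewer entry is nontrivial.
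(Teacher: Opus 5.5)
The paper gives no argument for this Fact beyond the citation of Gupta--Newman, so your sketch has to stand on its own. It does not: every decisive step is deferred (``careful hand or computer collection'', ``verify computationally'', ``exhibit'').

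\textbf{The class bound.} The central step here is false in the stated generality. Your pigeonhole argument with class-$2$ normal closures is just Fitting's theorem for $H=x_1^H\cdots x_n^H$, and it yields class $\le 2n$. It already kills commutators in which \emph{any} $x_i$ occurs three times, so the linearisation case is redundant, and the induction hypothesis on $H/N$ is never used. All the remaining content is your last case: weight-$2n$ commutators in which each generator occurs exactly twice. For $n=2$ these need not vanish.

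Let $F$ be free on $x,y$ and $V$ the verbal subgroup generated by all $[u,v,v,v]$. Then $V\le\gamma_4(F)$, and $[u,v,v,v]\gamma_5(F)$ depends only on $u,v$ modulo $\gamma_2(F)$. Write $u\equiv ax+by$ and $v\equiv \alpha x+\beta y$. The image of $[u,v,v,v]$ in the free Lie ring is $(b\alpha-a\beta)(\alpha^2A+2\alpha\beta B+\beta^2C)$, where $A=[y,x,x,x]$, $B=[y,x,x,y]=[y,x,y,x]$ and $C=[y,x,y,y]$. Hence $\gamma_4(F/V)/\gamma_5(F/V)\cong\mathbb{Z}^3/\langle A,2B,C\rangle\cong\mathbb{Z}/2$, generated by $[y,x,x,y]$. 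So the free $2$-generator $3$-Engel group has class exactly $4=2n$; the upper bound is your own Fitting argument.

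The identity you invoke, that $[y,x,x]$ commutes with $x$ and with $x^y$, holds in every $3$-Engel group, so it cannot kill these commutators. The bound $2n-1$ therefore needs the absence of elements of order $2$. That is harmless for the $5$-groups where the paper applies it. But any proof of your missing step must use that hypothesis, for instance by dividing by $2$ in linearised Engel relations, and your plan never does.

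\textbf{The identity for $G^5$.} This part fails as set up. An element of $G^5$ is a product of unboundedly many fifth powers. So checking the word on $y_1^5,\ldots,y_6^5$ in a six-generator free $3$-Engel group does not cover $G^5$, and no single finitely generated relatively free group does. Moreover, passing from ``$w=[[x_1,x_2,x_3],[x_4,x_5],x_6]$ has order $5$'' to ``$w$ vanishes on fifth powers'' presupposes that $w$ is multilinear. That holds only modulo commutators of higher weight, which you do not control. This part needs a genuinely structural argument about $G^5$.

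\textbf{Sharpness.} The sharpness claims are only asserted, not established.
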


If $G$ is a $3$-Engel $p$-group then $3$-generated subgroups are nilpotent of class at most $5$. In particular, the local Lazard correspondence applies for $3$-Engel $p$-groups with $p\geq 7$. The case of $p = 5$ is a limit case and it turns out that the Lazard correspondence applies in this case also. We call that an instance of an \textit{exceptional Lazard correspondence}. This very surprising fact seems to have been missed by specialists in the domain.

\begin{theorem}\label{thm:exceptLazard35}
    Let $G$ be a $3$-Engel group of exponent $5$. Then the formulas
    \[x+y := h_1(x,y)=
xy[x,y]^{2}
[x,y,x]^{3}
[x,y,y]^{2}\]
    and 
    \[[x,y]_L := h_2(x,y)=
[x,y]
[x,y,x]^{2}
[x,y,y]^{2}\]
    define a $3$-Engel Lie algebra $(L_G, +,[.,.]_L)$ over $\F_5$ on the domain of $G$. Further, the formula
\[x*y := f(x,y)=x+y+\frac{1}{2}[x,y]
-\frac{1}{12}[x,y,x]
+\frac{1}{12}[x,y,y]\]
defines a group structure on the domain of $L_G$, which coincides with the group structure of $G$, i.e. $(G,\cdot) = (G,*)$.
\end{theorem}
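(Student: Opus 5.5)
Since each axiom involves at most three variables, the plan is to reduce everything to a single finite relatively free group and a finite computation there.

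\textbf{Reduction to a finite group.} Every identity to be checked involves at most three (occasionally four) elements of $G$. These are associativity and commutativity of $+$, the identity $[x,y]_L=-[y,x]_L$, bilinearity of $[.,.]_L$, the Jacobi identity, the $3$-Engel identity for $[.,.]_L$, and the equality $f(x,y)=xy$. Let $F$ be the free group of rank $3$ in the variety of $3$-Engel groups of exponent $5$; for bilinearity in one variable, rank $4$ may be needed. By Fact \ref{fact:3Engelgroups}, $F$ is nilpotent of class at most $5$, and it is finite since it is a finitely generated nilpotent group of finite exponent. Each identity holds in $G$ as soon as it holds in $F$, since a relatively free group satisfies an identity exactly when the variety does. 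So the proof becomes a finite verification in an explicit finite $5$-group.

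\textbf{Well-definedness of the truncations.} The exponents appearing in $h_1,h_2$ are the BCH rationals reduced modulo $5$; for example $-\tfrac12\equiv 2$ and $\tfrac1{12}\equiv 3$. This only makes sense if the higher terms vanish. First I would show that in a $3$-Engel group of exponent $5$, every $2$-generated subgroup has class at most $3$, which is the bound $2n-1$ for $n=2$. I would then check that $[x,y,x,x]$, $[x,y,y,y]$ and $[x,y,x,y]$ vanish, using the $3$-Engel law and the fact that for exponent $5$ the class-$4$ part in two generators disappears. With this, $h_1,h_2$ are exactly the Lazard formulas truncated at the relevant class, and all denominators (powers of $2$ and $3$) are invertible mod $5$. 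In the same way, $f(x,y)$ only uses brackets of length $\le 3$ in two variables, which is legitimate once $L_G$ is a Lie algebra whose $2$-generated subalgebras have class $\le 3$. That last property follows from the $2$-generated group statement via the ordinary (local) Lazard correspondence, since $3<5$. In particular, $f(x,y)=xy$ holds by local Lazard applied to the $2$-generated subgroup $\langle x,y\rangle$, which has class $3<5$. Hence the final claim $(G,\cdot)=(G,*)$ reduces to the $2$-variable setting, where the standard correspondence applies directly.

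\textbf{The genuinely three-variable axioms.} Associativity of $+$, (bi)linearity of $[.,.]_L$ and the Jacobi identity involve three generators, whose subgroup may have class $5=p$. Here the standard Lazard argument fails and one must compute. I would build the finite group $F$ by a nilpotent quotient algorithm (e.g.\ the NQA or the $p$-quotient algorithm in GAP/Magma), with a consistent polycyclic presentation of order $5^{N}$. In $F$ I would evaluate the words
\[h_1(h_1(x,y),z)^{-1}h_1(x,h_1(y,z)),\qquad h_2(h_1(x,y),z)^{-1}\,h_1(h_2(x,z),h_2(y,z)),\]
and the Jacobi expression built from $h_1,h_2$, as well as $h_2(h_1(x,y),y)$-type Engel expressions. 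Each of these should reduce to the identity on the free generators. Scalar multiplication by $\F_5$ is given by powers, and it is compatible because $x^k$ commutes with $x$; additivity in the scalar follows from $2$-generator Lazard. I expect this step to be the main obstacle: the class-$5$ terms such as $[[x_1,x_2,x_3],[x_4,x_5]]$-like commutators are exactly where the correspondence could break. A conceptual route would be to show that the weight-$5$ contributions cancel using Fact \ref{fact:3Engelgroups} together with the fact that $\gamma_5$ of the $3$-generator free group is very small, but I would rely on the machine verification.

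\textbf{The $3$-Engel property.} Finally, $3$-Engelness of $L_G$ is a $2$-variable statement, so it is handled inside $\langle x,y\rangle$. There the correspondence is the classical one, which preserves the Engel condition at these low classes, or it can again be checked directly in $F$.
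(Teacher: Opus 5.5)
Your proposal is correct and follows essentially the same route as the paper: you truncate $h_1,h_2$ using that $2$-generated subgroups have class at most $3$, then verify the Lie axioms, the $3$-Engel law and $f(x,y)=xy$ by a nilpotent/$p$-quotient computation in the finite free rank-$3$ $3$-Engel group of exponent $5$. One difference is a valid shortcut: you derive the two-variable statements (commutativity, antisymmetry, the Engel law and $*=\cdot$) from the classical Lazard correspondence on $\langle x,y\rangle$, which has class $3<5$, whereas the paper leaves them to the machine check; and rank $4$ is never actually needed.
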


\begin{proof}
As the nilpotency class of $2$-generated subgroups of $G$ is at most $3$, every commutator of length $4$ vanishes, hence we cut off the formula to commutators of length at most $3$, which gives
    \[x+y := h_1(x,y)=
xy[x,y]^{-\frac{1}{2}}
[x,y,x]^{\frac{1}{12}}
[x,y,y]^{-\frac{1}{12}}\]
    and 
    \[[x,y]_L := h_2(x,y)=
[x,y]
[x,y,x]^{-\frac{1}{2}}
[x,y,y]^{-\frac{1}{2}}.\]
We translate the coefficients modulo $5$, for which we have $-\frac{1}{2} = 2$, $\frac{1}{12} = 3$, $-\frac{1}{12} = 2$. Proving the theorem involves first proving that the axioms of Lie $\F_5$-algebras are satisfied by the structure $(G,+,0,[.,.]_L)$. 

The easiest way to verify this is by using a computer
algebra system to run a nilpotent quotient algorithm. We wrote such a program in GAP, using the nilpotent quotient algorithm for groups provided by the anupq package \cite{ANUPQ}. The package allows to compute a free $3$-Engel rank $3$ group of exponent $5$. Then it suffices to enter the formulas above and check the axioms. We also check that the Lie algebra thus obtained is $3$-Engel. The fact that it is sufficient to compute a $3$-generated group in order to deduce a general result is explained in Remark \ref{rk:computeralgebralocnilp}. 

To get the full theorem, a further test has to be provided. From the functions $+$ and $[.,.]_L$, one defines the binary function 
\[x*y := f(x,y)=x+y+\frac{1}{2}[x,y]_L
-\frac{1}{12}[x,y,x]_L
+\frac{1}{12}[x,y,y]_L\]
and simply checks that $* = \cdot$. Such a procedure was written and run in GAP by the author and is freely available on his website:
\begin{itemize}
    \item Program: \url{https://choum.net/~chris/GAP-Lazard/exceptional_lazard_check_Engel3}
    \item Output: \url{https://choum.net/~chris/GAP-Lazard/OUTPUT-LazardProc-Class6Engel3Prime5Exp5.txt}
\end{itemize}
The program uses a file ``BCH-file" \url{https://choum.net/~chris/GAP-Lazard/BCH-file} which gives a very long list of the first terms of the BCH formula and its inverse. This list was computed by Cicalò, de Graaf and Vaughan-Lee using the algorithm described in \cite{Cicaloetal}. This file is publicly available as part of another GAP package \cite{LieRing}, namely the file ``pols.g".
\end{proof}

\begin{remark}
    It should be noted that the above theorem does \textit{not} work for groups of higher exponent. We checked that, for instance, in exponent $25$, the sum formula is not even commutative.
\end{remark}

A classical and important theorem of Higgins \cite{higginsEngel} states that if $p>n$, then any $n$-Engel Lie algebra over $\F_p$ is solvable if and only if it is nilpotent. This theorem has no analogue in the $n$-Engel $p$-group context, and the exceptional Lazard correspondence provide such an analogue.

\begin{corollary}\label{cor:higginsforgroups}
    Let $G$ be a $3$-Engel group of exponent $5$. Then $G$ is solvable if and only if $G$ is nilpotent.
\end{corollary}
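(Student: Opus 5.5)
The plan is to transport the statement through the exceptional Lazard correspondence of Theorem \ref{thm:exceptLazard35} and then invoke Higgins' theorem for $3$-Engel Lie algebras over $\F_5$. One direction is trivial: nilpotent groups are solvable. For the converse, let $G$ be a solvable $3$-Engel group of exponent $5$. Let $L_G$ be the $3$-Engel Lie $\F_5$-algebra on the same domain given by Theorem \ref{thm:exceptLazard35}. Since $5>3$, Higgins' theorem says $L_G$ is nilpotent as soon as it is solvable. So it suffices to show that solvability passes from $G$ to $L_G$, and that nilpotency passes back from $L_G$ to $G$.

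The key structural step is to show that the correspondence behaves as in the classical Lazard setting on the relevant substructures. I would prove two things.

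\begin{enumerate}
\item Normal subgroups of $G$ are exactly the ideals of $L_G$. A normal subgroup $N$ is closed under $h_1$ and $h_2$, and also under the Lie bracket $h_2(n,x)$ with arbitrary $x$, since every factor lies in $N$. Closure under scalar multiplication is immediate, because $\lambda x = x^\lambda$ (as $h_1(x,x)=x^2$). Conversely, an ideal $I$ is closed under $f$ and under group commutators, because $[x,y]_G$ can be rewritten via $f$ and the inverse formulas as a Lie expression in $x,y$ with every term containing a bracket with an element of $I$.
\item For normal subgroups $M,N$, the Lie bracket $[M,N]_L$ equals the group commutator subgroup $[M,N]$. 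The inclusion $\seq$ holds because $h_2(m,n)$ is a product of commutators of $m,n$ lying in $[M,N]$. For the reverse inclusion, I would express $[m,n]_G$ via BCH truncated at length $3$ as a sum of Lie brackets in $[M,N]_L$. A cleaner route is to argue inside the subalgebra generated by $m,n$, which is nilpotent of class $\le 3 <5$, so the ordinary Lazard correspondence applies there.
\end{enumerate}

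From (2), an induction gives $G^{(i)} = L_G^{(i)}$ for the derived series and $\gamma_i(G)=\gamma_i(L_G)$ for the lower central series, each term being a normal subgroup (respectively ideal) by (1). Hence $G$ solvable implies $L_G$ solvable. Higgins then gives $L_G$ nilpotent, and hence $G$ is nilpotent.

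The main obstacle is step (2), specifically the inclusion $[M,N]\seq[M,N]_L$ for subgroups that are not $2$-generated. The difficulty is that the BCH-type rewriting involves mixed terms such as $[m,n,m]$ and products formed via $h_1$, and each of these must be checked to stay inside the ideal generated by brackets. This is where I expect to use the local nature of the verification: $2$-generated subgroups have class $\le 3$, so classical Lazard applies to $\langle m,n\rangle$, and the resulting element lies in $[M,N]_L$ because that set is an ideal closed under $+$. If this becomes delicate, the fallback is to verify the needed identities by the same GAP computation in the free $3$-Engel group of rank $3$ and exponent $5$ that was used for Theorem \ref{thm:exceptLazard35}.
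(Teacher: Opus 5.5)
Your proposal is correct and takes the same route as the paper. You pass to the Lie algebra $L_G$ of Theorem \ref{thm:exceptLazard35}, apply Higgins' theorem (valid since $5>3$), and pull nilpotency back to $G$. The paper only asserts that solvability and nilpotency transfer along the correspondence, whereas you justify it by identifying normal subgroups with ideals and proving $[M,N]=[M,N]_L$ through the classical Lazard correspondence on $2$-generated subgroups (class $\le 3<5$), which is a valid way to fill in that step.
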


\begin{proof}
    We first apply the exceptional Lazard correspondence on a $3$-Engel solvable group of exponent five $G$ in order to obtain a Lie algebra structure $L$ on the domain of $G$. Then $L$ is a $3$-Engel Lie $\F_5$-algebra, and is solvable since $G$ is. Applying Higgins \cite{higginsEngel}, $L$ is nilpotent and therefore $G$ is nilpotent.
\end{proof}

We now turn to another case of the exceptional Lazard correspondence. We get the following bounds from the work of Havas and Vaughan-Lee \cite{HavasVaughanLee20054Engelgroups} where they prove that $4$-Engel groups are locally nilpotent.

\begin{fact}
Let $G$ be a $4$-Engel $p$-group with $p>5$. Then $G$ is nilpotent of class at most $7$. If $G$ is two-generated, then $G$ has class at most $6$.
\end{fact}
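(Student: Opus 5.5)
This statement is cited from Havas and Vaughan-Lee \cite{HavasVaughanLee20054Engelgroups}, so I would not reprove it from first principles. I would reduce it to finitely many computations and then cite their computer-aided results. The plan has three steps: reduce to finitely generated subgroups, bound the nilpotency class of the relevant free objects, and check that the bound does not depend on $p>5$.

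\textbf{Reduction.} By the theorem of Havas and Vaughan-Lee, $4$-Engel groups are locally nilpotent. A finitely generated $4$-Engel $p$-group is therefore a finite nilpotent $p$-group. Nilpotency of class $\le c$ is an identity, so it suffices to bound the class of every finitely generated subgroup by a constant independent of the number of generators. Concretely, I would consider the largest finite $r$-generator $4$-Engel $p$-group of class $c$, namely the relatively free object in the variety of $4$-Engel groups of class $c$ with $p$-power exponent. I would then show that the lower central series of this group stabilises at class $6$ for $r=2$ and at class $7$ for every $r$.

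\textbf{Independence from $r$.} For $r\ge 3$ I would follow Traustason's structural results \cite{traustason2gen4engel} and the Havas--Vaughan-Lee analysis. The key input is that for $p>5$, a $4$-Engel group of class $8$ satisfies multilinear identities. These force any commutator of weight $8$ to be expressible through commutators in at most $7$ distinct variables, and then to vanish. Multilinearisation needs the Engel identity $[x,y,y,y,y]=1$ to be linearised in $y$, which requires dividing by $4!$ and hence $p>4$. The weight-$8$ terms bring in further denominators involving $5$, so one needs $p>5$ to invert the relevant coefficients. After linearisation, a nilpotent quotient computation in the free $4$-Engel group on $7$ generators (with the exponent taken to be a large power of $p$ and $p$ treated generically) shows that $\gamma_8$ is trivial.

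\textbf{Two-generated case.} Here the computation is a direct nilpotent quotient run on two generators, which gives class $6$ for $p>5$.

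The main obstacle is making the computation uniform in $p$. A single NQ run only handles one fixed prime. I would handle this by running the nilpotent quotient algorithm over $\mathbb{Z}$, as in Nickel's NQ. I would then check that the torsion in $\gamma_8$ of the relatively free $4$-Engel group only involves the primes $2$, $3$ and $5$, which gives the conclusion for all $p>5$ at once.
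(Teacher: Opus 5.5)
\textbf{Verdict: genuine gap.} The class-$7$ bound is asserted as the output of a computation you neither perform nor cite, so your sketch does not establish it.

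\textbf{What the paper does.} It gives no proof of this Fact; it imports the bounds from the literature around Havas and Vaughan-Lee's theorem that $4$-Engel groups are locally nilpotent. Citing is all that is expected.

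\textbf{Your outer reduction is sound once stated correctly.} There is no ``largest finite $r$-generator $4$-Engel $p$-group of class $c$'', because the exponent is unbounded. The right object is the relatively free $r$-generator $4$-Engel group $F_r$, which is nilpotent by Havas--Vaughan-Lee. Suppose $\gamma_8(F_r)/\gamma_9(F_r)$ is a \emph{finite} $\{2,3,5\}$-group. Then every $r$-generated $4$-Engel $p$-group $G$ with $p>5$ has $\gamma_8(G)=\gamma_9(G)$, hence $\gamma_8(G)=1$ by nilpotency. Your closing criterion, that the torsion of $\gamma_8$ involves only $2,3,5$, is too weak: a free abelian part would survive in $p$-quotients. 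Reducing to a fixed rank is elementary. In a group of class $8$, $\gamma_8$ is generated by weight-$8$ commutators in the generators, each lying in an $8$-generated subgroup, so $r=8$ already covers every $r$. This uses only how $\gamma_8$ is generated, not any linearisation of the Engel law.

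\textbf{Where it breaks.} The class-$7$ bound is the entire content of the first sentence of the Fact. You assert the outcome of a nilpotent quotient computation in the free $4$-Engel group on $7$ generators, and the descent from $8$ to $7$ variables is not argued.

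Your account of where $p>5$ enters is also off. Polarising $[x,y,y,y,y]$ needs no division over $\mathbb{Z}$; it is recovering the Engel law from its polarisation that needs $24$ invertible. The prime $5$ is excluded because the statement is false there: the paper's table records non-nilpotent $4$-Engel $5$-groups, so the free object has a nontrivial $5$-part in weight $8$. This has nothing to do with denominators. ``Treating $p$ generically'' is not something an NQ run at exponent $p^k$ can do. Running NQ over $\mathbb{Z}$ buys uniformity in $p$, but not the missing structural input.

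\textbf{How to close the gap.} That input has to be imported. It is Traustason's theorem that locally nilpotent $4$-Engel groups without elements of order $2$, $3$ or $5$ are nilpotent of class at most $7$. It is obtained by Lie-ring methods: for $p>5$ the associated Lie ring is $4$-Engel because $24$ acts invertibly, and $4$-Engel Lie algebras in characteristic $0$ or $p>5$ have class at most $7$. Combined with Havas--Vaughan-Lee, this is what the paper's citation amounts to. Your two-generator plan, running NQ over $\mathbb{Z}$ on $F_2$ and checking that $\gamma_7(F_2)/\gamma_8(F_2)$ is a finite $\{2,3,5\}$-group, is fine in principle.
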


In particular, the case of $4$-Engel groups of exponent $7$ is again a limit case: every $2$-generated subgroup is nilpotent of class at most $6$ and the $3$-generated subgroups have class at most $7$. Again, we get an exceptional Lazard phenomenon here.

\begin{theorem}\label{thm:except4Engel7}
    Let $G$ be a $4$-Engel group of exponent $7$. Then the formulas

\begin{align*}
x+y:= &
xy[x,y]^3
[x,y,x]^3[x,y,y]^4
[x,y,x,x]^2[x,y,y,y]^5 \\
&[x,y,x,x,x]^2[x,y,x,x,y]^2
[x,y,x,y,x]^4[x,y,x,y,y]^6 \\
&[x,y,y,y,x]^2[x,y,y,y,y]^5
[x,y,x,x,x,x]^3[x,y,x,x,x,y]^4 \\
&[x,y,x,x,y,x]^4[x,y,x,x,y,y]^2
[x,y,x,y,x,y]^6[x,y,x,y,y,x]^6 \\
&[x,y,x,y,y,y]^4
[x,y,y,y,x,y]^2
[x,y,y,y,y,y]^4 .
\end{align*}
and 
\begin{align*}
[x,y]_L := &
[x,y]
[x,y,x]^3[x,y,y]^3
[x,y,x,x]^5[x,y,x,y]^2[x,y,y,y]^5 \\
&[x,y,x,x,x]^5[x,y,x,x,y]^5
[x,y,x,y,x]^3[x,y,x,y,y]
[x,y,y,y,y]^5 \\
&[x,y,x,x,x,x]^3
[x,y,x,x,y,x]
[x,y,x,y,x,y]^2 \\
&[x,y,x,y,y,x]^2
[x,y,x,y,y,y]
[x,y,y,y,y,y]^3
\end{align*}
    define a $4$-Engel Lie algebra $(L_G, +,[.,.]_L)$ over $\F_7$ on the domain of $G$. Further, the formula
\begin{align*}
x*y := x+y+&4[x,y]+4[x,y,x]+3[x,y,y]+2[x,y,x,y]\\
&+6[x,y,x,x,x]
+2[x,y,x,x,y]+[x,y,x,y,x]
+6[x,y,x,y,y]\\
&+5[x,y,y,y,x]
+[x,y,y,y,y]+3[x,y,x,x,x,y]+[x,y,x,x,y,y]\\
&+4[x,y,x,y,x,y]+4[x,y,x,y,y,y]+6[x,y,y,y,x,y].
\end{align*}
defines a group structure on the domain of $L_G$, which coincides with the group structure of $G$, i.e. $(G,\cdot) = (G,*)$.
\end{theorem}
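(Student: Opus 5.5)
The plan mirrors the proof of Theorem \ref{thm:exceptLazard35}. First I truncate the inverse BCH formulas $h_1,h_2$ and the BCH formula $f$. By the Havas--Vaughan-Lee bounds, every $2$-generated subgroup of a $4$-Engel group of exponent $7$ has class at most $6$, so all commutators of length $\geq 7$ in $x,y$ vanish. Hence $h_1(x,y)$ and $h_2(x,y)$ can be cut off at commutators of length $6$. The rational exponents that occur have denominators divisible only by primes $\leq 6$, so they make sense modulo $7$. Reducing them modulo $7$ should produce exactly the exponents displayed in the statement. This is a routine check against the list of coefficients in the BCH-file of Cicalò, de Graaf and Vaughan-Lee.

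The same holds for $f$. The subalgebra $\langle x,y\rangle$ of the candidate Lie algebra will also be $2$-generated, hence of class $\leq 6$. So $f$ truncates at length $6$, and its coefficients reduce to the displayed ones modulo $7$.

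Next I verify the Lie algebra axioms for $(G,+,[.,.]_L)$:
\begin{itemize}
\item the operation $+$ is associative and commutative, with identity $1$ and inverses;
\item $[.,.]_L$ is bilinear, alternating, and satisfies the Jacobi identity;
\item $7x=0$;
\item the resulting algebra is $4$-Engel.
\end{itemize}
Every axiom is an identity in at most three variables. So it suffices to check it in the free $3$-generated $4$-Engel group of exponent $7$. That group is a finite nilpotent group of class at most $7$ (using the Havas--Vaughan-Lee class bound, or local finiteness), and the anupq nilpotent quotient algorithm in GAP computes it explicitly. Remark \ref{rk:computeralgebralocnilp} justifies reducing from the general group to the $3$-generated free object: any instance of an axiom lives in the image of a homomorphism from that free group. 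Linearity over $\F_7$ follows from additivity together with exponent $7$. The $4$-Engel identity $[x,y,y,y,y]_L=0$ involves only two variables, so it is checked in the same way.

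Finally, I form $x*y=f(x,y)$ using the computed $+$ and $[.,.]_L$, and check the identity $x*y=xy$ in the free $2$-generated (or $3$-generated) group. This shows that the group recovered from $L_G$ is $G$ itself.

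The main obstacle is the computational step. The free $3$-generator $4$-Engel group of exponent $7$ has class $7$ and is considerably larger than the exponent-$5$ $3$-Engel case, so evaluating $h_1$ iterated inside the associativity and Jacobi checks is expensive. The key worry is associativity of $+$, since class $7$ exceeds $p-1=6$ exactly in three variables. If a direct pq computation is too heavy, I would first work in the free $2$-generated quotient for the two-variable checks, and run only the three-variable identities (associativity, Jacobi, distributivity) in the $3$-generator quotient. These identities can be computed incrementally modulo $\gamma_7$ and then on the last layer $\gamma_7$, which is central and elementary abelian. Since $\gamma_7$ is central, the length-$7$ discrepancy of each identity is a multilinear expression on that layer, which is easy to check.
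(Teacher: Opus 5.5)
Your proposal is correct and is essentially the paper's proof. It truncates using the class-$6$ bound for $2$-generated subgroups and reduces the exponents mod $7$. It then verifies the Lie $\F_7$-algebra axioms, the $4$-Engel law and $x*y=xy$ by an anupq computation in the free $3$-generated $4$-Engel group of exponent $7$, justified through local finiteness as in Remark~\ref{rk:computeralgebralocnilp}. Two steps are loose but unnecessary, because the formulas in the statement are explicit and the computation checks them directly: your ``hence of class $\leq 6$'' for $\langle x,y\rangle$ in $L_G$ would need an argument, for example that the Lie lower central series of that subalgebra lies inside the lower central series of the subgroup, and your ``multilinear'' fallback on $\gamma_7$ is imprecise.
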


\begin{proof}
    The proof is exactly the same as in the proof of Theorem \ref{thm:exceptLazard35}, using the same program with different parameters. The program and output are available here: 
    \begin{itemize}
        \item Program: \url{https://choum.net/~chris/GAP-Lazard/exceptional_lazard_check_Engel4}
        \item Output: \url{https://choum.net/~chris/GAP-Lazard/OUTPUT-LazardProc-Class9Engel4Prime7Exp7.txt}.
    \end{itemize}
\end{proof}

\begin{remark}(Definability)\label{rk:lazarddefinable}
Let $G$ be a $3$-Engel group of exponent $5$ and $L_G$ be the Lie algebra obtained by applying the exceptional Lazard correspondence. Then, from a model-theoretic point of view, $G$ and $L_G$ are the same object. This follows from the fact that the operations $+$ and $[.,.]_L$ are defined by parameter-free
quantifier-free formulas. Therefore, we have $\Aut(G) = \Aut(L_G)$ and $G$ and $L_G$ have the same definable sets.
\end{remark}

\begin{remark}\label{rk:computeralgebralocnilp}
(Why can we use computer algebra?)
The fact that the varieties of $3$-Engel and $4$-Engel groups are locally nilpotent is crucially used in the proofs of Theorem \ref{thm:exceptLazard35} and \ref{thm:except4Engel7}. Let us give more details concerning this claim. Fix $k,n,e\in \N$ with $e = p^s$ and let $F = F(k,n,e)$ be the free $k$-generated $n$-Engel group of exponent $e$. By the positive solution to the restricted Burnside problem (Zelmanov, \cite{Zelmanov1990,Zelmanov1991}), there exists a minimal $i_0\in \N$ such that $\gamma_{i_0} = \gamma_{i_0+1}$ and therefore, $F/\gamma_{i_0}$ is the \textit{maximal finite quotient} of $F$ in the sense that it satisfies the following universal property.

$\bullet$
\textit{If $F$ is freely generated by $x_1,\ldots,x_k$, and $G = \vect{g_1,\ldots,g_k}$ is any \textit{finite} $n$-Engel exponent $e$ group, then the map $x_i\mapsto g_i$ extends to an epimorphism $F/\gamma_{i_0} \to G$.}

This is what allows us to exploit the nilpotent quotient algorithm (NQA). The latter allows to compute the largest \textit{finite} nilpotent quotient of a given group (with entries: the rank $r$, the exponent $e$, the Engel condition $n$), hence the group $F/\gamma_{i_0}$. Assume that we are given a locally finite $n$-Engel group $G$ of exponent $e$, then any identity of the form $P(x_1,\ldots,x_k) = 0$ computed via a computer in $F/\gamma_{i_0}$ will transfer to $P(g_1,\ldots,g_k) =0$ for any $g_1,\ldots, g_k\in G$ via the epimorphism $F/\gamma_{i_0}\to \vect{g_1,\ldots, g_k}$.
We have shown the following:

$\bullet$ \textit{Any identity computed in $F/\gamma_{i_0}$ transfers to any locally finite $n$-Engel group of exponent $e$.}

The last thing to observe is that a locally nilpotent class of groups of bounded exponent is locally finite, so $3$-Engel groups of exponent $5$ and $4$-Engel groups of exponent $7$ are locally finite. Note that $\omega$-categorical groups are also locally finite, hence for our conclusions on $\omega$-categorical groups below, we do not need the general results of Heineken.
\end{remark}

It should be understood that Theorems \ref{thm:exceptLazard35} and \ref{thm:except4Engel7} work without the use of any of the above hard facts about $3$-Engel and $4$-Engel groups, since it is just a matter of defining the right functions and checking the axioms. We do, however, use local finiteness in order to ensure an appropriate use of computer algebra, according to Remark \ref{rk:computeralgebralocnilp}. For the case of $3$-Engel $5$-groups, this is Heineken \cite{Heineken1961}, and in the case of $4$-Engel $7$-groups, it follows from the (global) nilpotency of those, whose argument is essentially Higgins \cite{higginsEngel}. Of course, if one were willing to check by hand all the computation, then Theorem \ref{thm:except4Engel7} would also have an extra application. In general, results about $n$-Engel groups can be deduced from results about $n$-Engel Lie rings by passing to the associated Lie ring $L(G) = \oplus_i \gamma_i/\gamma_{i+1}$. The problem in doing so is that defining the right variety in which the associated Lie ring lives is often quite hard, see for instance \cite{newmanvaughanlee4engelexpo5} where the variety of the associated Lie ring of a $4$-Engel group of exponent $5$ is described. In fact, the associated Lie ring of an $n$-Engel group need not be an $n$-Engel Lie ring (this is known to happen for $n\geq 5$, see \cite{VaughanLee2011LiemethodsEngelgroups}). However, when the Lazard correspondence applies, the corresponding Lie ring is obviously $n$-Engel if and only if the group is. This allows to deduce, for instance from the classical fact due to Higgins \cite{higginsEngel} that $4$-Engel Lie algebras over $\F_p$ are nilpotent for $p>5$ that $4$-Engel groups of exponent $p$ are nilpotent, for $p>5$.

\subsection{Countably categorical $3$-Engel and $4$-Engel groups}\label{sec:smalvaluesomegacat}

The main result of \cite{delbee3Engelchar5} is the following.

\begin{fact}\label{fact:3EngelChar5}
    Let $L$ be a $3$-Engel Lie algebra over a field of characteristic $5$. If there are finitely many orbits in the action of $\Aut(L)$ on $L\times L\times L\times L$, then $L$ is nilpotent.
\end{fact}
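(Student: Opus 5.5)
The plan is to follow the strategy used for Theorem \ref{thm:WCLAimpliesCherlinII}, but inside the associative enveloping algebra of adjoint maps rather than inside an associative algebra given in advance. The goal is to reduce to Cherlin Theorem I. The hypothesis on orbits on $L^4$ matches that reduction: Cherlin Theorem I needs finitely many orbits on triples, and one extra coordinate is spent on a parameter $a$ at which we take germs. Let $E\seq\End(L)$ be the associative algebra generated by the maps $\mathrm{ad}_b:x\mapsto [x,b]$, written on the right as in the proof of Theorem \ref{thm:WCLAimpliesCherlinII}. The $3$-Engel identity gives $\mathrm{ad}_b^3=0$. We cannot hope for $E$ itself to be commutative. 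Moreover, $3$-Engel Lie algebras in characteristic $5$ need not be solvable, since $3$-Engel groups of exponent $5$ are not solvable and the two are linked by Theorem \ref{thm:exceptLazard35}. So Higgins' theorem is unavailable, and the argument has to produce a commutative piece directly.

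First, I will use the known structure theory of free $3$-Engel Lie algebras of characteristic $5$ (Traustason, Vaughan-Lee), or a direct nilpotent quotient computation justified as in Remark \ref{rk:computeralgebralocnilp}. From it I will extract a nonzero characteristic ideal $I$ of $L$, generated by a multilinear polynomial $w(y_1,\dots,y_r)$ (for instance by suitable commutators $[y_1,y_2,y_2]$ or $[y_1,y_2,y_3,y_4]$). It should satisfy a commutation law of the form
\[x\,\mathrm{ad}_u\mathrm{ad}_v=x\,\mathrm{ad}_v\mathrm{ad}_u \quad\text{for } u,v\in I \text{ (possibly modulo a smaller ideal)}.\]
This is the analogue of the relation $a[b_1,\dots,b_k][c_1,\dots,c_k]=a[c_1,\dots,c_k][b_1,\dots,b_k]$ used in the Claim of Theorem \ref{thm:WCLAimpliesCherlinII}. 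Fixing $a\in L$, the subalgebra $C$ of $E$ generated by $\{\mathrm{ad}_u : u\in I\}$ then yields a commutative algebra of germs $C_a$ at $a$. The two remaining properties of $C_a$ are handled as follows.
\begin{itemize}
\item $C_a$ is interpretable with the parameter $a$. By $\omega$-categoricity, every germ is represented by a sum of a uniformly bounded number of products of boundedly many $\mathrm{ad}_u$.
\item $C_a$ is nil. This follows from local finiteness together with Zorn's theorem for Lie algebras (finite Engel Lie algebras are nilpotent), which gives uniform local nilpotency.
\end{itemize}
Cherlin Theorem I then shows that $C_a$ is nilpotent. Uniformizing over $a$ by $\omega$-categoricity gives $t$ such that $[x,u_1,\dots,u_t]=0$ for all $x\in L$ and all $u_i\in I$.

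With this bound in hand, I will show that $I$ is nilpotent, indeed that $\gamma_m(I)$ vanishes for some $m$. Since $I$ is a characteristic, hence definable, ideal, the quotient $L/I$ is again $\omega$-categorical and $3$-Engel. Its Lie algebra satisfies the extra law $w=0$. I then iterate: either $L/I$ falls into the class of $3$-Engel algebras satisfying $w=0$, which I expect to be nilpotent outright (for example metabelian-type identities, to which Higgins applies), or the step is repeated a bounded number of times. Stacking the resulting nilpotent (or solvable-with-nilpotent-top) layers, I conclude that $L$ is solvable. Nilpotency then follows from Higgins' theorem, since $p=5>3$, or from the $\omega$-categorical solvable-implies-nilpotent result \cite[Corollary 3.3]{delbee4engelchar3}.

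I expect the main obstacle to be the first step: finding the right ideal $I$ and the exact commutation identity for its adjoint maps, and checking that the interpretation uses only one parameter so that the $4$-tuple hypothesis suffices. I would look for this identity first by computing in the free $3$-Engel Lie algebra of characteristic $5$ on few generators, testing commutators of adjoint maps of low-degree commutators. If exact commutation fails, I would fall back on commutation modulo a further characteristic ideal, which the same germ argument can absorb by induction, exactly as in the case $k>1$ of Theorem \ref{thm:WCLAimpliesCherlinII}.
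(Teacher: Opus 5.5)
Your plan has the right outer shape: germs at one parameter of a commutative family of operators, then Cherlin Theorem I, then solvability and Higgins. But the step that carries all the content is left as a hope, and in the form you set it up it cannot work. By the Jacobi identity, $[x,u,v]-[x,v,u]=[x,[u,v]]$. So your commutation law for all $x\in L$ just says $[I,I]\seq Z(L)$, which already makes $I$ nilpotent of class at most $2$.

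Commutation modulo a smaller ideal $J$ does not help. The family $\{\mathrm{ad}_u : u\in I\}$ is a \emph{linear space} of pairwise commuting operators with $\mathrm{ad}_u^3=0$. Fully linearising gives $6\,\mathrm{ad}_{u_1}\mathrm{ad}_{u_2}\mathrm{ad}_{u_3}=0$ on $L/J$, hence $[x,u_1,u_2,u_3]\in J$, because $6$ is invertible in characteristic $5$. So products of three generators of $C$ vanish for free, and Cherlin Theorem I, together with the orbit hypothesis, contributes nothing.

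What remains is a purely equational argument: words found in free $3$-Engel algebras, a bounded iteration, Higgins at the end. If it worked, it would prove that \emph{every} $3$-Engel Lie algebra of characteristic $5$ is solvable. As you note yourself, that is false (Razmyslov's non-solvable $(p-2)$-Engel algebras for $p=5$). So no such ideal and sequence of words exist, and your placeholder is exactly where the difficulty lies. For the hypothesis to do any work, Cherlin's theorem must be applied to a commutative algebra of operators that the identities do not already force to be nilpotent, so not a linear span of adjoint maps. You would then still have to turn the resulting bound, which is not an identity of the variety, into nilpotency of $L$. The present paper does not reprove this Fact; it quotes it from \cite{delbee3Engelchar5}, where Cherlin's argument is carried out directly in the enveloping algebra.

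A secondary point concerns the hypothesis. It gives finitely many orbits on $L^4$, not $\omega$-categoricity. Yet three of your steps use full $\omega$-categoricity: the interpretation of $C_a$ by bounded sums of bounded products, the claim that characteristic ideals are definable, and local finiteness. The correct bookkeeping identifies $C_a$ with $\{(a)f : f\in C\}\seq L$, with product $((a)f,(a)g)\mapsto (a)fg$; this is well defined precisely because $fg=gf$. Then $\Aut(L)_a$ acts on it by automorphisms when $I$ is characteristic. Hence the orbits of $\Aut(C_a)$ on $C_a^3$ are controlled by the orbits of $\Aut(L)$ on $L^4$ with first coordinate $a$. Uniformity in $a$ needs only finitely many orbits on $L$. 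Nilness should come from local nilpotency of Engel Lie algebras (Kostrikin's theorem already covers $p=5>3$) rather than from local finiteness.
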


From Fact \ref{fact:3EngelChar5}, Theorem \ref{thm:exceptLazard35} and Remark \ref{rk:lazarddefinable}, we immediately get the following.

\begin{theorem}\label{thm:3expo5}
Let $G$ be a $3$-Engel group of exponent $5$ where the action of $\Aut(G)$ on $G\times G\times G\times G$ has only finitely many orbits. Then $G$ is nilpotent.
\end{theorem}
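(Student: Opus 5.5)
The plan is to move the problem to the Lie algebra side using the exceptional Lazard correspondence, apply Fact \ref{fact:3EngelChar5} there, and pull nilpotency back to $G$.

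\textbf{Step 1: build the Lie algebra.} Let $G$ be a $3$-Engel group of exponent $5$ such that $\Aut(G)$ has finitely many orbits on $G^4$. Theorem \ref{thm:exceptLazard35} gives a $3$-Engel Lie $\F_5$-algebra $L_G=(G,+,[.,.]_L)$ on the same domain as $G$. Its use of computer algebra needs $G$ to be locally finite. This holds because $3$-Engel groups are locally nilpotent by Heineken, and a locally nilpotent group of bounded exponent is locally finite (Remark \ref{rk:computeralgebralocnilp}).

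\textbf{Step 2: transfer the orbit condition.} By Remark \ref{rk:lazarddefinable}, $+$ and $[.,.]_L$ are given by parameter-free terms in the group language. Hence every automorphism of $G$ is an automorphism of $L_G$. Conversely, $\cdot$ is recovered from $L_G$ by the term $f$, since $(G,\cdot)=(G,*)$, so $\Aut(G)=\Aut(L_G)$. The orbits of $\Aut(L_G)$ on $L_G^4$ are therefore exactly those of $\Aut(G)$ on $G^4$, and there are finitely many of them. Fact \ref{fact:3EngelChar5} then shows $L_G$ is nilpotent, say $\gamma_{c+1}(L_G)=0$.

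\textbf{Step 3: pull nilpotency back to $G$.} This is the step that needs the most care, because the Lazard dictionary was only stated in the bounded-class setting. First, $x*y$ is a polynomial in Lie brackets of length at most $3$. Second, the group commutator $[x,y]_G$ is expressed, via the same Lazard-type inversion, as $[x,y]_L$ plus a $\F_5$-combination of longer Lie brackets in $x,y$. This identity can be verified on the same finite free object used in Theorem \ref{thm:exceptLazard35}.

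From these I would show by induction that $\gamma_i(L_G)$ is an ideal which is also a normal subgroup of $G$, and that $[\gamma_i(L_G),G]_G\subseteq\gamma_{i+1}(L_G)$. Then $\gamma_i(G)\subseteq\gamma_i(L_G)$, so $\gamma_{c+1}(G)=1$.

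To show that each $\gamma_i(L_G)$ is a subgroup and normal, I would use that $x*y$ and conjugation are Lie polynomials whose non-linear terms involve brackets with an ideal element. The cleanest fallback is the equivalence of categories: $G$ is nilpotent, so the lower central series of $L_G$ reaches $0$ in finitely many steps, and ideals correspond to normal subgroups with the series matching term by term. I would justify this correspondence identity-wise, again by checking on the relatively free finite group that commutator and bracket generate the same verbal series.
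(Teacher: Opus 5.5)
Your proposal is correct and follows the paper's route. The paper gets the theorem in one line from Theorem \ref{thm:exceptLazard35}, Remark \ref{rk:lazarddefinable} (so $\Aut(G)=\Aut(L_G)$ and the orbit condition on $4$-tuples transfers) and Fact \ref{fact:3EngelChar5}, leaving the passage from nilpotency of $L_G$ back to $G$ to the ``equivalence of categories''. Your Step 3 fills this in correctly: expanding $[x,y]_G=(-x)*(-y)*x*y$ gives a Lie polynomial with no linear part, hence $[\gamma_i(L_G),G]_G\subseteq\gamma_{i+1}(L_G)$ and $\gamma_i(G)\subseteq\gamma_i(L_G)$. The ``fallback'' sentence is therefore unnecessary, and as written it has the implication backwards: it starts from ``$G$ is nilpotent'' instead of ``$L_G$ is nilpotent''.
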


\begin{corollary}\label{cor:omegacat3Eng5grp}
Let $G$ be an $\omega$-categorical $3$-Engel $5$-group. Then $G$ is nilpotent.
\end{corollary}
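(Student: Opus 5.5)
The plan is to reduce to Theorem~\ref{thm:3expo5} by splitting $G$ along the subgroup of fifth powers, and then to glue the two pieces using solvability and Wilson's theorem (Fact~\ref{fact:wilson}). The point is that $G$ is only assumed to be a $5$-group, not of exponent $5$, so Theorem~\ref{thm:exceptLazard35} does not apply to $G$ directly.

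\emph{Step 1 (bounded exponent and definability of $G^5$).} Since $G$ is $\omega$-categorical, it is uniformly locally finite and has bounded exponent, say $5^k$. Let $H = G^5$ be the subgroup generated by all $5$-th powers. It is the increasing union of the $\emptyset$-definable sets
\[P_m = \set{g_1^5\cdots g_m^5 \mid g_i\in G}.\]
By Ryll-Nardzewski this chain stabilizes, so $H = P_m$ for some $m$, and $H$ is a $\emptyset$-definable characteristic subgroup. Consequently $G/H$ is interpretable in $G$, hence $\omega$-categorical. It is also a $3$-Engel group of exponent $5$, so Theorem~\ref{thm:3expo5} (or directly Corollary~\ref{cor:omegacat3Eng5grp} restricted to exponent $5$) shows that $G/H$ is nilpotent, and in particular solvable.

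\emph{Step 2 ($H$ is solvable).} By Fact~\ref{fact:3Engelgroups}, $H$ satisfies $[[x_1,x_2,x_3],[x_4,x_5],x_6]=1$. Hence in $\bar H = H/Z(H)$ every left-normed commutator $[a,b,c]$ commutes with every commutator $[d,e]$.

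Now $\gamma_3(\bar H)$ is generated by the elements $[a,b,c]$, because this set is closed under conjugation: $[a,b,c]^g=[a^g,b^g,c^g]$. Likewise $\gamma_2(\bar H)$ is generated by the elements $[d,e]$. It follows that $[\gamma_3(\bar H),\gamma_2(\bar H)]=1$.

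Since $\bar H'' = [\gamma_2,\gamma_2]\le\gamma_4\le\gamma_3$, we get $\bar H'''\le[\gamma_3,\gamma_2]=1$. So $H/Z(H)$ is solvable of derived length at most $3$, and $H$ is solvable of derived length at most $4$.

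\emph{Step 3 (conclusion).} By Steps 1 and 2, $G$ is an extension of a solvable group by a solvable group, hence solvable. As an $\omega$-categorical $p$-group, $G$ is locally finite and therefore locally nilpotent. Fact~\ref{fact:wilson} then gives that $G$ is nilpotent.

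The only genuinely deep input is Step 1's appeal to Theorem~\ref{thm:3expo5}, which rests on the exceptional Lazard correspondence and Fact~\ref{fact:3EngelChar5}. The step I expect to need the most care is Step 2: one must make sure the Gupta--Newman identity is applied to elements of $H$ (as Fact~\ref{fact:3Engelgroups} states), and that the generation argument for $\gamma_2$ and $\gamma_3$ really only uses closure of the generating sets under conjugation.
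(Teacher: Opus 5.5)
Your proof is correct and uses the same three ingredients as the paper's: Theorem~\ref{thm:3expo5} for the exponent-$5$ part, the Gupta--Newman identity of Fact~\ref{fact:3Engelgroups} to make $G^5$ solvable, and Wilson's Fact~\ref{fact:wilson} to pass from solvable to nilpotent. The difference is only organizational: the paper first reduces to characteristically simple factors of a maximal characteristic series and uses the dichotomy $G^5=1$ or $G^5=G$, whereas you split once along $1\le G^5\le G$, which is why you need, and correctly supply, the definability of $G^5$.
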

\begin{proof}
By $\omega$-categoricity, $G$ admits only finitely many characteristic subgroups. Let us consider a maximal chain of characteristic subgroups:
\[G = G_1\rteq\ldots \rteq G_r\rteq G_{r+1} = 1.\]
Then, each quotient $G_i/G_{i+1}$ is characteristically simple, and by Wilson's Theorem (Fact \ref{fact:wilson}), it is enough to prove that each quotient $G_i/G_{i+1}$ is nilpotent, so we will now assume that $G$ is characteristically simple. If $G$ has exponent $5$, then the result follows directly from Theorem \ref{thm:3expo5}. 
Assume now that the exponent of $G$ is strictly larger than $5$. Then, the characteristic subgroup $G^5$ generated by the $5$-th powers of elements of $G$ is nonzero, therefore $G = G^5$. Using the second part of Fact \ref{fact:3Engelgroups}, $G$ satisfies the identity
\[[[x_1,x_2,x_3],[x_4,x_5],x_6] = 1\]
and is therefore solvable. Applying again Wilson's Theorem, we conclude that $G$ is nilpotent.
\end{proof}

\begin{corollary}\label{cor:3engelsylow}
Let $G$ be an $\omega$-categorical $3$-Engel group. Then $G$ is nilpotent.
\end{corollary}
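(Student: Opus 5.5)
The plan is to reduce to the $p$-group cases already treated, using the Sylow structure of locally nilpotent groups. First, $G$ is locally finite because it is $\omega$-categorical, so every finitely generated subgroup is a finite $3$-Engel group. By Zorn's theorem such a subgroup is nilpotent, hence $G$ is locally nilpotent. $\omega$-categoricity also gives bounded exponent $e = p_1^{\alpha_1}\cdots p_s^{\alpha_s}$. A locally nilpotent torsion group is the restricted direct product of its Sylow $p$-subgroups, so
\[G = P_1\times\cdots\times P_s,\]
where $P_i$ is the set of elements of $p_i$-power order.

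Each $P_i$ is $0$-definable, for instance as the set of $x$ with $x^{p_i^{\alpha_i}}=1$. Hence $P_i$ is characteristic and $\omega$-categorical, and it is a $3$-Engel $p_i$-group. It therefore suffices to show that each $P_i$ is nilpotent, since a finite direct product of nilpotent groups is nilpotent, of class equal to the maximum of the classes.

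We split by the prime $p=p_i$.
\begin{itemize}
\item For $p=2$, apply Corollary~\ref{cor:3engel2and3}.
\item For $p=5$, apply Corollary~\ref{cor:omegacat3Eng5grp}.
\item For $p=3$ and $p\geq 7$, use the general facts recorded in the table: $3$-Engel $p$-groups for these primes are nilpotent of class at most $4$. For $p\geq 7$ this can be recovered from the local Lazard correspondence (Fact~\ref{fact:3Engelgroups} gives class at most $5<p$ on $3$-generated subgroups) combined with Higgins' theorem for $3$-Engel Lie algebras over $\F_p$. For $p=3$ we cite the classical result, due to Heineken and Gupta--Newman.
\end{itemize}

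The only step needing care is the uniform treatment of the primes other than $2$ and $5$. I would handle it simply by quoting the known nilpotency bounds from the table, which the paper treats as background facts. If one wanted to avoid these, an alternative is available: Heineken's result that $3$-Engel groups without elements of order $2$ or $5$ are nilpotent (class at most $4$) covers the product of the $P_i$ with $p_i\notin\{2,5\}$ in one stroke. The case of order $2$ is then handled by solvability together with Wilson's theorem (Fact~\ref{fact:wilson}), and the case of order $5$ by Corollary~\ref{cor:omegacat3Eng5grp}.
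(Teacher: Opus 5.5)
Your proposal is correct and matches the paper's proof: decompose $G$ into its finitely many definable Sylow subgroups, then invoke Heineken for $p\neq 2,5$, Corollary~\ref{cor:3engel2and3} for $p=2$, and Corollary~\ref{cor:omegacat3Eng5grp} for $p=5$. The details you add (local finiteness plus Zorn for local nilpotency, and definability of $P_i$ via $x^{p_i^{\alpha_i}}=1$) are exactly what the paper leaves implicit.
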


\begin{proof}
We know that $G$ has bounded exponent, say $m = p_1^{\alpha_1}\cdots p_s^{\alpha_s}$ and is locally nilpotent, hence $G$ is the direct product of its Sylow $p$-subgroups: $G = G_{p_1}\times\cdots \times G_{p_s}$. Each $G_{p_i}$ is characteristic, hence definable, and hence nilpotent by \cite{Heineken1961} (for $p\neq 2,5$), Corollary \ref{cor:3engel2and3} (for $p = 2$) and \ref{cor:omegacat3Eng5grp} (for $p = 5$). It follows that $G$ is nilpotent.
\end{proof}

Our conclusions on $4$-Engel $5$-groups will use the following deep result of \cite{newmanvaughanlee4engelexpo5} on $4$-Engel groups of exponent $5$. 

\begin{fact}\label{fact:4engelexpo5newmanvaughanlaa}
    Let $G$ be any $4$-Engel group of exponent $5$. Then $G$ is a subdirect product of groups which are either
    \begin{itemize}
        \item center-by-($3$-Engel)
        \item nilpotent of class at most $10$.
    \end{itemize}
    In particular, $\gamma_{11}(G)$ is center-by-($3$-Engel).
\end{fact}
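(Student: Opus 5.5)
The main dichotomy is a deep result that I would not reprove. It is the structure theorem of Newman and Vaughan-Lee \cite{newmanvaughanlee4engelexpo5}. Their argument works in the associated Lie ring of the relatively free $4$-Engel group of exponent $5$ and relies on heavy nilpotent quotient computations. It shows that the verbal subgroup defining ``center-by-($3$-Engel)'' and the term $\gamma_{11}$ intersect trivially in the relatively free group, which yields the subdirect decomposition. So I take the first part of the statement as given, and my plan is only to derive the ``in particular'' clause from it. That derivation is an elementary argument about identities and subdirect products.

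Fix an embedding $G\hookrightarrow \prod_{i\in I} G_i$ as a subdirect product. Partition $I=I_1\sqcup I_2$, where each $G_i$ with $i\in I_1$ is center-by-($3$-Engel) and each $G_i$ with $i\in I_2$ is nilpotent of class at most $10$. Write $\pi_i$ for the projections. Lower central terms are verbal, so $\pi_i(\gamma_{11}(G))\seq \gamma_{11}(G_i)$. This is trivial for $i\in I_2$. Hence $H:=\gamma_{11}(G)$ embeds in $P:=\prod_{i\in I_1}G_i$.

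Next I would check that $P$ is center-by-($3$-Engel). Its center is $Z(P)=\prod_{i\in I_1}Z(G_i)$, and
\[P/Z(P)\cong\prod_{i\in I_1}G_i/Z(G_i).\]
The $3$-Engel identity $[x,y,y,y]=1$ is preserved under direct products, so $P/Z(P)$ is $3$-Engel. I would then pass to the subgroup $H$. We have $H\cap Z(P)\seq Z(H)$, and $H/(H\cap Z(P))$ embeds in $P/Z(P)$, so it is $3$-Engel. The quotient $H/Z(H)$ is a homomorphic image of $H/(H\cap Z(P))$. Laws pass to quotients, so $H/Z(H)$ is $3$-Engel. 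This proves that $\gamma_{11}(G)$ is center-by-($3$-Engel).

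The only genuine obstacle lies in the first part, namely the Newman--Vaughan-Lee decomposition. Reproving it would mean identifying the right variety for the associated Lie ring and verifying the relevant identities by machine in the finite maximal quotients of the free groups, as in Remark \ref{rk:computeralgebralocnilp}. I would simply cite it. The deduction above is routine once that is granted.
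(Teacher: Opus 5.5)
Your derivation of the last clause from a subdirect decomposition is correct. It is also the reasoning the paper leaves implicit, since the paper gives only the citation. The gap is the step you take on faith: a subdirect decomposition of an \emph{arbitrary} $G$. Your own argument shows that such an embedding forces $V(G)\cap\gamma_{11}(G)=1$, where $V$ is the verbal subgroup of the law $[x,y,y,y,z]=1$. This condition does not pass to quotients.

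To see this, let $F$ be a relatively free $4$-Engel group of exponent $5$ of large enough finite rank; it is a finite $5$-group. Then $\gamma_{11}(F)\neq 1$, because these groups need not be nilpotent. Also $V(F)\neq 1$, unless every such group is center-by-($3$-Engel), in which case the Fact is empty. Pick nontrivial (hence order $5$) elements $v\in V(F)\cap Z(F)$ and $c\in\gamma_{11}(F)\cap Z(F)$, and set $G=F/\vect{vc^{-1}}$. The images of $v$ and $c$ coincide, and this image is nontrivial. Indeed, $v=(vc^{-1})^k$ would give $v^{1-k}=c^{-k}\in V(F)\cap\gamma_{11}(F)$. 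If that intersection is trivial, this forces $k\equiv 1$ and $k\equiv 0 \pmod 5$; if it is not trivial, $F$ itself is a counterexample. So $V(G)\cap\gamma_{11}(G)\neq 1$, and $G$ is not a subdirect product of the required kind. What can hold, and what you yourself describe, is the embedding $F\hookrightarrow F/V(F)\times F/\gamma_{11}(F)$ for relatively free $F$, i.e.\ the variety is the join of the two subvarieties. Your ``which yields the subdirect decomposition'' for every $G$ does not follow, and the paper's wording of the first part has the same overstatement.

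The repair keeps your idea. For relatively free $F$ your argument applies, and even collapses to $V(\gamma_{11}(F))\seq V(F)\cap\gamma_{11}(F)=1$, so $\gamma_{11}(F)$ is center-by-($3$-Engel). Now take an arbitrary $G$ and $h_1,h_2,h_3\in\gamma_{11}(G)$. Choose a finitely generated $K\le G$ with $h_i\in\gamma_{11}(K)$, and write $K=F/N$ with $F$ relatively free of finite rank. Then $\gamma_{11}(K)=\gamma_{11}(F)N/N$ is a homomorphic image of $\gamma_{11}(F)$. Center-by-($3$-Engel) groups form a variety, defined by $[x,y,y,y,z]=1$, and so are closed under homomorphic images; hence $[h_1,h_2,h_2,h_2,h_3]=1$. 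This proves the last clause for every $4$-Engel group of exponent $5$, and that clause is the part actually used in Corollary~\ref{cor:omegacat4engelexpo5}.
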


\begin{corollary}\label{cor:omegacat4engelexpo5}
    Let $G$ be an $\omega$-categorical $4$-Engel group of exponent $5$, then $G$ is nilpotent.
\end{corollary}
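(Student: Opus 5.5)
We reduce the statement to the $3$-Engel case (Corollary \ref{cor:omegacat3Eng5grp}, or rather Theorem \ref{thm:3expo5} since the exponent is $5$) using Fact \ref{fact:4engelexpo5newmanvaughanlaa}, together with Wilson's Theorem (Fact \ref{fact:wilson}) to glue finitely many nilpotent layers. Since $G$ is $\omega$-categorical, every term $\gamma_i(G)$ of the lower central series is characteristic, hence $0$-definable, so $H := \gamma_{11}(G)$ is again $\omega$-categorical. The quotient $G/H$ is nilpotent of class at most $10$. It therefore suffices to show that $H$ is nilpotent, because then $G$ is solvable (an extension of a nilpotent group by a nilpotent group) and locally nilpotent, and Fact \ref{fact:wilson} yields that $G$ is nilpotent.

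By the last assertion of Fact \ref{fact:4engelexpo5newmanvaughanlaa}, $H$ is center-by-($3$-Engel). I would then take $Z := Z(H)$, which is characteristic in $H$ and hence $0$-definable in $G$. The quotient $H/Z(H)$ should be $3$-Engel, though care is needed here. The fact literally provides some central subgroup $N \leq Z(H)$ with $H/N$ a $3$-Engel group. Then $H/Z(H)$ is a quotient of $H/N$, so it is also $3$-Engel. It has exponent dividing $5$ and is interpretable in $G$, hence $\omega$-categorical. By Corollary \ref{cor:omegacat3Eng5grp} (or directly Theorem \ref{thm:3expo5}), $H/Z(H)$ is nilpotent, and consequently $H$ is nilpotent.

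The step I expect to be the main obstacle is extracting the ``in particular'' statement from the subdirect decomposition, that is, checking that $\gamma_{11}(G)$ is center-by-($3$-Engel) in the precise sense needed and not merely embedded in a product. Suppose $G \leq \prod_j G_j$ with each $G_j$ either center-by-($3$-Engel) or of class at most $10$. Then $\gamma_{11}(G)$ projects trivially to the factors of the second kind, so it embeds in the product of the factors of the first kind. The image of $\gamma_{11}(G)$ in that product is a subgroup of a product of center-by-($3$-Engel) groups. To make this usable, I would apply the $3$-Engel identity and the centrality condition coordinatewise, giving $[[x,y,y,y],z]=1$ for $x,y,z\in H$. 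Thus $[x,y,y,y]\in Z(H)$ for all $x,y \in H$, which is exactly the statement that $H/Z(H)$ is $3$-Engel. The rest is routine: definability of $\gamma_{11}$ and of the centre under $\omega$-categoricity, and preservation of exponent $5$ in quotients.
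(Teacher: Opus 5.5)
Your proof is correct. It uses the same three ingredients as the paper: Fact \ref{fact:4engelexpo5newmanvaughanlaa}, Corollary \ref{cor:omegacat3Eng5grp}, and Wilson's Theorem (Fact \ref{fact:wilson}). The only difference is organizational: the paper first reduces via Wilson to a characteristically simple $G$, so that $\gamma_{11}(G)\in\{1,G\}$ and $G$ itself is center-by-($3$-Engel), whereas you work directly with the characteristic series $1\le Z(\gamma_{11}(G))\le\gamma_{11}(G)\le G$ and invoke Wilson once at the end. Your coordinatewise check that $[[x,y,y,y],z]=1$ on $\gamma_{11}(G)$ is a sound justification of the ``in particular'' clause, which the paper takes for granted.
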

\begin{proof}
    Reasoning as in the proof of Corollary \ref{cor:omegacat3Eng5grp}, using Wilson's Theorem, we may assume that $G$ is characteristically simple. Therefore, $\gamma_{11}(G) = 1$ or $\gamma_{11}(G) = G$. In the former case, we are done, so let us assume the second case. By Fact \ref{fact:4engelexpo5newmanvaughanlaa}, $G$ is therefore center-by-($3$-Engel), so that $G/Z(G)$ is $3$-Engel. By Corollary \ref{cor:omegacat3Eng5grp}, $G/Z(G)$ is nilpotent, and therefore $G$ is also nilpotent.
\end{proof}

Finally, to extend Corollary \ref{cor:omegacat4engelexpo5} to arbitrary exponent, we use the following fact from Abdollahi and Traustason \cite{AbdollahiTraustason2002}.

\begin{fact}\label{fact:abdollahitraustason}
    Let $n,p$ be given and $r$ such that $p^{r-1}<n\leq p^r$. Let $G$ be a locally finite $n$-Engel $p$-group.
    \begin{enumerate}
        \item If $p$ is odd, then $G^{p^r}$
is nilpotent.
        \item If $p = 2$ then $(G^{2^r})^2$ is nilpotent.
    \end{enumerate}
\end{fact}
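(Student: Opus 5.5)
The statement is quoted from \cite{AbdollahiTraustason2002}, and I would not reprove it in full. If I had to reconstruct it, I would follow the standard route for results of this type, as below. I have not checked the steps in detail.

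\textbf{Reduction to finite groups with a uniform bound.} A locally finite $p$-group with a nilpotent-of-class-$\leq c$ bound on every finitely generated subgroup of $G^{p^r}$ has $G^{p^r}$ nilpotent of class $\leq c$. So it suffices to prove the following: there is $c=c(n,p)$ such that for every finite $n$-Engel $p$-group $H$, the subgroup $H^{p^r}$ is nilpotent of class $\leq c$. Indeed, any finitely many elements of $G^{p^r}$ lie in $H^{p^r}$ for some finite subgroup $H\leq G$, since products of $p^r$-th powers only involve finitely many elements. The uniformity of $c$ is essential and is where the real work lies.

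\textbf{Key step: $p^r$-th powers are bounded left Engel.} Fix $x,y\in H$ and expand $[y,x^{p^r}]$ in terms of the commutators $[y,{}_i x]$. Modulo higher-weight corrections in $\langle y\rangle^{\langle x\rangle}$, it is a product of factors $[y,{}_i x]^{\binom{p^r}{i}}$. Two facts feed into this.
\begin{itemize}
\item Factors with $i\geq n$ vanish by the $n$-Engel identity.
\item For $0<i<n\leq p^r$ the binomial coefficient $\binom{p^r}{i}$ is divisible by $p$.
\end{itemize}
Hence each application of $\mathrm{ad}\,x^{p^r}$ pushes $y$ one step down a filtration of $\langle y\rangle^{\langle x\rangle}$ combining $p$-power and commutator weight, of Zassenhaus/Lazard type. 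Iterating, $[y,{}_m x^{p^r}]=1$ for some $m=m(n,p)$. In other words, every $p^r$-th power is a left $m$-Engel element.

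For $p=2$ the binomial divisibility is weaker: $\binom{2^r}{i}$ is divisible only by $2$, not by $4$, and the error terms of the collection formula are not controlled at $p=2$. One therefore passes to squares of $p^r$-th powers, which accounts for $(G^{2^r})^2$ in clause (2).

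\textbf{Nilpotency with bounded class; the main obstacle.} By the previous step, $H^{p^r}$ is a finite $p$-group generated by left $m$-Engel elements lying in a $p$-group of bounded exponent-type behaviour. I would then invoke the Lie-theoretic results descending from Zelmanov. In a finite $p$-group generated by elements that are left $m$-Engel (and of bounded order), the associated Lie ring is generated by ad-nilpotent elements of bounded index, satisfies a polynomial identity inherited from the $n$-Engel law, and is therefore nilpotent of bounded class. Transferring this back gives class $\leq c(n,p)$.

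This last step is the hard one: qualitative nilpotency follows from Baer's theorem on left Engel elements in finite groups, but the uniform bound requires Zelmanov's theorem on Lie algebras satisfying a PI and generated by ad-nilpotent elements. The parity subtlety for $p=2$ also needs care.
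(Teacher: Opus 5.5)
The paper gives no proof of this statement. It is imported from \cite{AbdollahiTraustason2002}, so deferring to that reference is exactly what the paper does. The reconstruction you sketch, however, has a genuine gap.

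Your key step is vacuous. Since $G$ is $n$-Engel, $[y,{}_n x^{p^r}]=1$ holds for all $x,y$ simply by applying the Engel law to the pair $(y,x^{p^r})$. So every element of $G$, not only every $p^r$-th power, is already a left $n$-Engel element, and the binomial expansion buys nothing. Consequently your last step uses no property of $H^{p^r}$ that $H$ itself lacks: $H$ too is a finite $p$-group generated by left $n$-Engel elements and satisfying the $n$-Engel law. If that step were sound, it would bound the class of every finite $n$-Engel $p$-group by some $c(n,p)$. Hence every locally finite $n$-Engel $p$-group would be nilpotent. That is false: there are non-solvable $3$-Engel groups of exponent $5$ (Bachmuth--Mochizuki), whose finite subgroups even have bounded order. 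This is why the paper's table has $\infty$ in that entry, and why $\omega$-categoricity is needed at all.

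The error lies in the appeal to Zelmanov. A PI Lie algebra generated by \emph{finitely many} elements, all of whose commutators are ad-nilpotent, is nilpotent. But its class depends on the number of generators, so this yields local nilpotency, never a class bound uniform over all finite subgroups. Baer's theorem is empty here as well, since finite $p$-groups are nilpotent anyway.

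What is missing is a mechanism in which $x^{p^r}$ behaves genuinely better than $x$; this is the natural place for $n\le p^r$ to enter. For instance, let $M/N$ be an elementary abelian section of $G$ with $M,N$ normal, regarded as an $\F_p\langle x\rangle$-module. The Engel law says $(x-1)^n$ acts as $0$ on $M/N$. In characteristic $p$ we have $x^{p^r}-1=(x-1)^{p^r}$, which therefore also acts as $0$. So $G^{p^r}$ centralizes every such section; in the restricted Lie algebra of the Zassenhaus filtration this reads $\mathrm{ad}(\bar x^{[p^r]})=(\mathrm{ad}\,\bar x)^{p^r}=0$.

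A real proof needs an ingredient of this kind together with a genuinely uniform input, for instance a Burns--Medvedev type theorem that $G^{e}$ is nilpotent of $n$-bounded class for some $n$-bounded $e$. Your filtration argument is only a degenerate shadow of the first ingredient, from which you extract a property that was already free. The second ingredient is absent.

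Finally, your explanation of case (2) is incorrect. For every prime, $v_p\binom{p^r}{i}=r-v_p(i)$. So $\binom{2^r}{i}$ is often divisible by $4$ (e.g.\ $\binom{4}{1}=4$, $\binom{8}{2}=28$). Meanwhile, for odd $p$ the coefficient $\binom{p^r}{p^{r-1}}$ is divisible by $p$ exactly once. Binomial divisibility behaves the same at $p=2$ as at odd primes, so it cannot account for the extra squaring in $(G^{2^r})^2$.
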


\begin{corollary}
    Let $G$ be an $\omega$-categorical $4$-Engel $5$-group. Then $G$ is nilpotent.
\end{corollary}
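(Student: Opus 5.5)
We follow the pattern of the proof of Corollary \ref{cor:omegacat3Eng5grp}. By $\omega$-categoricity, $G$ has only finitely many characteristic subgroups, so we fix a maximal chain of characteristic subgroups $G = G_1\rteq \ldots \rteq G_{r+1} = 1$. Each factor $G_i/G_{i+1}$ is interpretable in $G$, hence $\omega$-categorical, and it is again a $4$-Engel $5$-group. By Wilson's Theorem (Fact \ref{fact:wilson}), $G$ is solvable, hence nilpotent, as soon as every factor is nilpotent. So it suffices to treat a characteristically simple $\omega$-categorical $4$-Engel $5$-group $G$.

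Recall that $\omega$-categorical groups have bounded exponent and are locally finite. So $G$ has exponent $5^s$ for some $s$, and $G$ is a locally finite $4$-Engel $5$-group. We split into two cases according to the characteristic subgroup $G^5$.

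\emph{Case $G^5 = 1$.} Then $G$ has exponent $5$, and Corollary \ref{cor:omegacat4engelexpo5} applies directly, so $G$ is nilpotent.

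\emph{Case $G^5\neq 1$.} By characteristic simplicity, $G = G^5$, and iterating gives $G = G^{5^k}$ for every $k$. Apply Fact \ref{fact:abdollahitraustason} with $n = 4$ and $p = 5$: the unique $r$ with $5^{r-1} < 4 \leq 5^r$ is $r = 1$. Since $p$ is odd, $G^{5}$ is nilpotent, and therefore $G = G^5$ is nilpotent. Alternatively, one can note that $G^{5^s} = 1$, which together with $G = G^{5^s}$ forces $G$ trivial, contradicting $G^5\neq 1$. Either way, this case poses no problem.

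The only points needing care are routine. First, the hypotheses are inherited by the factors of the chain: interpretability gives $\omega$-categoricity, and the Engel identity and the $p$-group property pass to quotients and subgroups. Second, $G^5$ is characteristic, which makes the dichotomy $G^5 \in \{1, G\}$ valid. Third, Fact \ref{fact:abdollahitraustason} requires local finiteness, which holds for $\omega$-categorical groups. The substantive difficulty lies entirely in the exponent-$5$ case, and that is already handled by Corollary \ref{cor:omegacat4engelexpo5}, which rests on Fact \ref{fact:4engelexpo5newmanvaughanlaa} and the exceptional Lazard correspondence.
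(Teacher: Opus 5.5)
Your main argument is correct and is the paper's own proof. You reduce, via a maximal chain of characteristic subgroups and Wilson's theorem, to the characteristically simple case. You then use Corollary \ref{cor:omegacat4engelexpo5} when $G^5=1$, and Fact \ref{fact:abdollahitraustason} with $r=1$ when $G^5=G$.

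You should, however, delete the claim that ``iterating gives $G=G^{5^k}$ for every $k$'', together with the ``alternative'' built on it. From $G=G^5$ you only get $(G^5)^5=G$. The subgroup $G^{25}$ generated by $25$-th powers is merely contained in $(G^5)^5$. So you cannot conclude $G=G^{25}$, let alone $G=G^{5^s}$, and the contradiction with $G^{5^s}=1$ does not follow.

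This aside never uses the Engel hypothesis. If it were valid, it would show that every characteristically simple group of exponent $5^s$ has exponent $5$, and no such general fact is available. That is exactly why the paper needs Engel-specific input to dispose of the case $G=G^5$: Fact \ref{fact:abdollahitraustason} here, and the identity for $G^5$ from Fact \ref{fact:3Engelgroups} in Corollary \ref{cor:omegacat3Eng5grp}.
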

\begin{proof}
Reasoning as in the proof of Corollary \ref{cor:omegacat3Eng5grp}, using Wilson's Theorem, we may assume that $G$ is characteristically simple. As $\omega$-categorical groups are locally finite, we may apply Fact \ref{fact:abdollahitraustason} with $p = 5$ and $n = 4$, which imposes $r = 1$. The conclusion is that $G^5$ is nilpotent. As $G$ is characteristically simple, either $G^5 = 1$ in which case we conclude by Corollary \ref{cor:omegacat4engelexpo5} or $G^5 = G$, which yields that $G$ is nilpotent.
\end{proof}

\begin{corollary}
    Let $G$ be an $\omega$-categorical $4$-Engel group of odd exponent. Then $G$ is nilpotent.
\end{corollary}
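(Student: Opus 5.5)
The plan is to reduce to Sylow subgroups, exactly as in the proof of Corollary \ref{cor:3engelsylow}. Let $G$ be an $\omega$-categorical $4$-Engel group of odd exponent $m = p_1^{\alpha_1}\cdots p_s^{\alpha_s}$; all $p_i$ are odd.

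First we check that $G$ splits into its Sylow subgroups. $G$ is locally finite because it is $\omega$-categorical. By Zorn's theorem, every finite subgroup of $G$ is nilpotent, so $G$ is locally nilpotent. A locally nilpotent torsion group is the direct product of its Sylow subgroups, hence
\[G = G_{p_1}\times\cdots\times G_{p_s}.\]
Each $G_{p_i}$ is characteristic, being the set of $p_i$-elements. By $\omega$-categoricity it is therefore $0$-definable, hence $\omega$-categorical, and it is a $4$-Engel $p_i$-group. A finite direct product of nilpotent groups is nilpotent of class the maximum of the classes. So it suffices to show that each $G_{p_i}$ is nilpotent.

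We then treat the primes case by case.
\begin{itemize}
\item $p = 3$: Corollary \ref{cor:3engel2and3} applies, since every $4$-Engel $3$-group is solvable and Wilson's theorem (Fact \ref{fact:wilson}) then gives nilpotency.
\item $p = 5$: this is the preceding Corollary on $\omega$-categorical $4$-Engel $5$-groups.
\item $p \geq 7$: Fact \ref{fact:abdollahitraustason} with $n = 4 \leq p = p^1$ gives $r = 1$, so $G_p^{p}$ is nilpotent. As in the previous proofs, we pass to characteristically simple factors of a maximal chain of characteristic subgroups. By Wilson's theorem it is enough to treat a characteristically simple factor $H$. Either $H^p = H$, which is nilpotent, or $H^p = 1$. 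In the latter case $H$ is a $4$-Engel group of exponent $p > 5$, hence nilpotent of class at most $7$ by the Havas--Vaughan-Lee bound quoted before Theorem \ref{thm:except4Engel7}.
\end{itemize}

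Two steps need a little care. For $p \geq 7$, the Wilson reduction requires the chain to have nilpotent factors, and this holds because $\omega$-categoricity gives only finitely many characteristic subgroups. The main point to watch is that the Sylow subgroups really inherit $\omega$-categoricity, that is, that they are definable. This holds because in a group of bounded exponent, $G_p = \{x \mid x^{p^{\alpha}} = 1\}$ is defined by a single equation.
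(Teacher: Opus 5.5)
Your proof is correct and follows the paper's route: the Sylow decomposition of Corollary~\ref{cor:3engelsylow}, with $p=3$ handled by Corollary~\ref{cor:3engel2and3} and $p=5$ by the preceding corollary. The one difference is at $p\geq 7$: your detour through Fact~\ref{fact:abdollahitraustason} and characteristically simple sections is valid but unnecessary, since the quoted Havas--Vaughan-Lee fact already bounds the class of every $4$-Engel $p$-group with $p>5$ by $7$, not only those of exponent $p$, so it applies to $G_p$ directly.
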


\begin{proof}
As in the proof of Corollary \ref{cor:3engelsylow}, since the exponent has no $2$-factor.
\end{proof}

\bibliographystyle{plain}
\bibliography{biblio.bib}{}

\end{document}

%% file: preamble.tex
\usepackage{anysize}

\usepackage{lineno}
\linenumbers

\usepackage{amssymb,latexsym}
\usepackage{amsmath,amsthm}
\usepackage{amsfonts,mathrsfs}
\usepackage{mathtools}

\usepackage{mathptmx} 

\usepackage{halloweenmath}

\usepackage{multicol}

\usepackage{tikz-cd}

\usepackage{todonotes}

\usepackage[all]{xy}
\usepackage{graphicx}

\usepackage{xcolor,url}
\usepackage{hyperref}
\hypersetup{
    colorlinks,
    citecolor=airforceblue,
    filecolor=airforceblue,
    linkcolor=airforceblue,
    urlcolor=airforceblue
}

\usepackage{tabularray}

\usepackage{phonetic}
\usepackage{pb-diagram}
\usepackage{lscape}
\usepackage{marvosym}
\usepackage{multicol}

\usepackage{subcaption}

\usepackage{enumitem}

\usepackage{ytableau}

\usepackage{listings}
\usepackage{color}

\definecolor{dkgreen}{rgb}{0,0.6,0}
\definecolor{gray}{rgb}{0.5,0.5,0.5}
\definecolor{mauve}{rgb}{0.58,0,0.82}

\usepackage{forest}

\usepackage{tabularray}
\UseTblrLibrary{diagbox}

\usepackage{wrapfig}

\usepackage{tikz}
\usetikzlibrary{calc}
\usetikzlibrary{cd}

\usepackage{etoolbox}
\patchcmd{\section}{\normalfont}{\normalfont\color{airforceblue}}{}{}
\patchcmd{\subsection}{\normalfont}{\normalfont\color{airforceblue}}{}{}

\theoremstyle{plain}
\newtheorem{theorem}{Theorem}[section]

\newtheorem{corollary}[theorem]{Corollary}

\newtheorem{fact}[theorem]{Fact}
\newtheorem{conjecture}[theorem]{Conjecture}
\newtheorem*{theorem*}{Theorem}
\newtheorem*{corollary*}{Corollary}
\newtheorem*{conjecture*}{Conjecture}

\theoremstyle{definition}

\newtheorem{question}[theorem]{Question}

\theoremstyle{remark}
\newtheorem{remark}[theorem]{Remark}
\newtheorem{claim}{Claim}

\newcommand\N{\mathbb{N}}

\newcommand\F{\mathbb{F}}

\newcommand{\End}{\mathrm{End}}

\def\seq{\subseteq}

\def\rteq{\trianglerighteq}

\newcommand{\set}[1]{\{ {#1} \}}
\newcommand{\vect}[1]{\langle {#1} \rangle}

\DeclareMathOperator{\Aut}{Aut}

\definecolor{airforceblue}{rgb}{0.36, 0.54, 0.66}

\newcommand{\afbf}[1]{\textcolor{airforceblue}{\textbf{#1}}}
  
\newcommand{\circleblue}[1]{%
  \raisebox{0.5ex}{\tikz[baseline]{\node[draw=complBlue, circle, inner sep=1pt] {#1};}}%
}

\newcommand{\tikzmark}[1]{\tikz[overlay,remember picture] \node (#1) {};}

\definecolor{complBlue}{RGB}{0, 0, 139}
\definecolor{crimsonred}{RGB}{220,20,60}

\def\Ind{\setbox0=\hbox{$x$}\kern\wd0\hbox to 0pt{\hss$\mid$\hss}
\lower.9\ht0\hbox to 0pt{\hss$\smile$\hss}\kern\wd0}
\def\Notind{\setbox0=\hbox{$x$}\kern\wd0\hbox to 0pt{\mathchardef
\nn=12854\hss$\nn$\kern1.4\wd0\hss}\hbox to
0pt{\hss$\mid$\hss}\lower.9\ht0 \hbox to 0pt{\hss$\smile$\hss}\kern\wd0}

\def\indi#1{\mathop{\ \ \hbox to 0ex{\hss$\vert^{\hbox to 0ex{$\scriptstyle#1$\hss}}$\hss}
\lower1ex\hbox to 0ex{\hss$\smile$\hss}\ \ }}

\def\nindi#1{\mathop{\ \ \hbox to 0ex{\hss$\!\not{\vert}^{\hbox to 0ex{$\scriptstyle\,#1$\hss}}$\hss}
\lower1ex\hbox to 0ex{\hss$\smile$\hss}\ \ }}
